\providecommand{\abs}[1]{\lvert#1\rvert}
\providecommand{\norm}[1]{\lVert#1\rVert}
\begin{document}

\numberwithin{equation}{section}
\newtheorem{cor}{Corollary}[section]
\newtheorem{df}[cor]{Definition}
\newtheorem{rem}[cor]{Remark}
\newtheorem{theo}[cor]{Theorem}
\newtheorem{pr}[cor]{Proposition}
\newtheorem{lem}[cor]{Lemma}

\renewcommand{\)}{\right)}
\newcommand{\mR}{{\mathbb R}}
\newcommand{\mE}{{\mathbb E}}
\newcommand{\mF}{{\mathbb F}}
\newcommand{\mC}{{\mathbb C}}

\newcommand{\eqb}{{\begin{equation}}}
\newcommand{\eqe}{{\end{equation}}}
\renewcommand{\a}{{\alpha}}
\renewcommand{\o}{{\overline{\Omega}}}
\renewcommand{\l}{\left(}
\renewcommand{\r}{\right)}

\title{On Convergence of Solutions to Equilibria for   \\ Fully Nonlinear Parabolic Systems with \\ Nonlinear Boundary Conditions  }
\author{Helmut Abels\footnote{Fakult\"at f\"ur Mathematik,  
Universit\"at Regensburg,
93040 Regensburg,
Germany, e-mail: {\sf helmut.abels@mathematik.uni-regensburg.de}},\ \
Nasrin Arab\footnote{Fakult\"at f\"ur Mathematik,  
Universit\"at Regensburg,
93040 Regensburg,
Germany}, 
Harald Garcke\footnote{Fakult\"at f\"ur Mathematik,  
Universit\"at Regensburg,
93040 Regensburg,
Germany}
}


\date{\today}

\maketitle
\begin{abstract}
Convergence  to stationary solutions  in fully nonlinear parabolic
systems with general nonlinear boundary conditions is shown  in situations  where the set of stationary solutions  creates a  $C^2$-manifold of finite dimension which is normally stable. We apply the parabolic Hölder setting   which allows  to deal with nonlocal terms  including  highest order point evaluation. In this direction  some theorems concerning the linearized systems is also extended.
 As an application of  our  main result  we prove that the   lens-shaped networks generated by circular arcs are stable under the surface diffusion flow.
\end{abstract}
\smallskip
\noindent \textbf{Keywords.} nonlinear stability, fully nonlinear parabolic systems, general nonlinear boundary conditions, nonlocal PDE, normally stable, free boundary problems, surface diffusion flow, triple junctions,  lens-shaped network.

\smallskip
\noindent \textbf{Mathematics Subject Classification (2000):}

\noindent
35B35, 35K55, 35K50,  37L10, 53C44, 35B65
\section{Introduction}
This work is motivated by the the appearance of nonlocal, nonlinear  terms (highest order point evaluations) together with   general nonlinear boundary conditions when  studying the stability for   the fourth-order geometric flow, the surface diffusion flow, with triple junctions.

There are several questions arising  doing this studying: Which setting for  function spaces  can be used for the system of PDEs arising  from such geometric problems? Which class of nonlinear parabolic systems   can model it? 
 Having in mind that we  should  also take care of  nonlinear boundary conditions, finally do the well-known theorems about stability cover  such general problem?

Let us first look closely to the nature of  geometric problems.
In most geometric flows, the stationary solutions are
 invariant under  translation and  under dilation. (This is the case for example for the volume preserving
mean curvature flow and for the surface diffusion flow.)  Therefore typically, we are in a situation where the set of stationary solutions creates locally   a smooth finite-dimensional   manifold.  A simple approach for  proving  stability for such problems  is  the generalized principle of linearized stability.
 
Such an approach was introduced by Prüss, Simonett and Zacher \cite{Pruss20093902}  for  abstract quasilinear problems and also for vector-valued quasilinear parabolic systems with vector-valued nonlinear boundary conditions in the framework of $L_p$-optimal regularity. This approach was extended in \cite{pruss2009612} to cover a wider range of   settings and a wider range of classes of nonlinear parabolic equations, including fully nonlinear equations but just for abstract evolution equations, i.e., without nonlinear boundary conditions. 

However,  for geometric flows  with triple junctions, because of the highest-order point evaluation in the corresponding  parabolic system (due to the movement of triple junction), one cannot work in
a standard $L_p$-framework, as e.g. in \cite{Pruss20093902}. Moreover, the general nonlinear boundary conditions    (due to 
the contact, angle,   curvature and flux conditions) prevent an application of the results of Prüss et al. in \cite{pruss2009612}, which deal with abstract evolution
equations in general function spaces. 

The purpose of this paper is to extend the approaches given in \cite{Pruss20093902,pruss2009612}
to cover fully nonlinear parabolic  systems with general nonlinear boundary conditions in  parabolic Hölder spaces. Within this classical setting, i.e., the  parabolic Hölder setting we are allowed  to deal with those  nonlocal terms.
 
We  have achieved our desired objective which we summarize here:
Suppose that for a fully
nonlinear parabolic  system with general nonlinear boundary conditions we have a
finite dimensional $C^2$-\textit{manifold of equilibria} $\mathcal{E}$ such that at a point
$u_*\in \mathcal{E}$, the null space  $N(A_0)$ of the linearization $A_0$ is
given by the tangent space of $\mathcal{E}$ at  $u_*$, zero is a semi-simple eigenvalue of $A_0$, and the 
rest of the spectrum  of $A_0$ is stable.
Under these assumptions our main result states
that solutions with
initial data close to  $u_*$ exist globally in the classical sense  and converge  towards the manifold of equilibria, i.e., to some
point  on $\mathcal E$ as time tends to infinity.

In a forthcoming paper    we plan to apply our main result  to show that the stationary solutions of the form of  the standard planar double bubbles are stable under the surface diffusion flow.
It is worth noting that for the surface diffusion flow for closed hypersurfaces   Escher, Mayer and Simonett
\cite{Escher-Mayer-Simonett} used \textit{center manifold theory}
to deal with this situation. In fact they showed that the dimension of
the set of equilibria coincides with the dimension of the center
manifold which then implies that both sets have  to coincide.
This then implies stability.
Typically it is difficult to apply the theory of center manifolds
and this is in particular true for parabolic equations
involving highly nonlinear boundary conditions.
 
The paper is organised as follows. In Section \ref{setting} we formulate the problem and in Section \ref{Main Result} we state and prove our main result, i.e., Theorem \ref{theo01}.   The proof depends upon  results for the asymptotic behavior of linear  systems which are  given in the appendix. In this direction, extending the result stated in \cite{lunardisinestrariwah}, we construct  explicitly an  extension operator for  the case of vector-valued unknowns (see Subsection \ref{Sec. extension operator}). 

As an application of our main result we show in Section  \ref{application}   that   the   lens-shaped networks generated by circular arcs  are stable under the surface diffusion flow. Indeed the  lens-shaped networks are the simplest examples of the more general triple junctions where the resulting PDE has nonlocal terms in the highest order derivatives, see \eqref{mu} and \eqref{nonlinear, nonlocal}.  Therefore we work in function spaces which yield classical solutions. 

The  proof of the main theorem follows  \cite{Pruss20093902,pruss2009612}, i.e.,  it is based on  reducing the system  to its "normal form" by means of
spectral projections. However, there are differences mainly coming from  the different natures of the function spaces used: Obviously, the assumption $(A_2)$ in \cite{pruss2009612},  used to get the estimates  on functions $T$ and $R$, see \eqref{normal form} below, needed for applying  the assumption $(A_4)$ in \cite{pruss2009612},   is not satisfied in the parabolic Hölder setting. To overcome this difficulty we have derived  these estimates directly from the smoothness assumptions on the nonlinearities, see Proposition \ref{TRS} below (cf. \cite[Proposition 10]{LatushkinPrussSchnaubelt}). Moreover,  in the parabolic Hölder setting we have 
$$
\mathbb{E}_1(J)=C^{1+\frac{\alpha}{2m}}(J, X)\cap B(J_, X_{1}) \,, 
$$   
which  is  clearly not  continuously embedded in $C(J, X_1)$, i.e.,
somehow the condition $(A_1)$ in \cite{pruss2009612} is violated. As a
result we have to give      more arguments in  step (f) of our proof,
based on the existence theorem on an arbitrary  large time interval, see
Proposition \ref{existence at large} below. Furthermore, as mentioned
before, we need to show the asymptotic behavior for linear inhomogeneous
systems in   parabolic Hölder spaces   whose  counterpart is
available in the $L_p$-setting.

\section{Fully nonlinear parabolic systems with \\ general nonlinear boundary conditions in a \\parabolic Hölder setting}\label{setting}
Let $\Omega\subset\mR^n$ be a bounded domain of class $C^{2m + \a}$ with  boundary $\partial \Omega$, where $m \in \mathbb N$ and $0<\a<1$. Let also $\nu (x)$ denote the  outer normal of $\partial \Omega$ at $x \in \partial \Omega $. We consider the  nonlinear boundary value problem 
\begin{equation}
\left\{
 \begin{aligned} \label{eq1}
   \partial _t u(t, x) + A (u(t,\cdot))(x) &= F(u(t,.)) (x),  &  &x\in \overline{\Omega}\,,         & &t>0\,, \\[0.1cm] 
   B_j (u(t,\cdot))(x)                     &= G_j(u(t,.))(x), &  &x\in\partial\Omega\,,
        & &j= {1,\dots, mN}\,,\\[0.1cm]
   u(0, x)                                 &= u_{0}(x),       &  &x\in \overline{\Omega}\,,         \\[0.1cm]
 \end{aligned}
\right.
\end{equation}
where $u : \o \times [ 0, \infty ) \rightarrow \mathbb{R}^N$ and $A$ is a linear $2m$th-order differential operator of the form
\begin{equation*}
(Au)(x)=\sum_{|\gamma|\leq 2m}a_\gamma(x)\nabla^{\gamma}u (x)\,, \quad x\in\overline{ \Omega}\,.
\end{equation*}
Moreover, $B_j$ are  linear differential operators of order  $m_j,$
\begin{equation*}
 (B_ju)(x)=\sum_{|\beta|\leq m_{j}}b^{j}_\beta(x)\nabla^{\beta}u (x)\,,\quad x\in \partial \Omega \,,\quad j= 1,\dots,mN\,. 
\end{equation*}
Here the coefficients $a_\gamma(x)\in \mathbb{R}^{N\times N}$, $b^{_{j}}_\beta(x)\in\mathbb{R}^N$ and 
\begin{equation*}
0 \leq m_1 \leq m_2 \leq \cdots \leq m_{mN} \leq 2m-1 \,.
\end{equation*} 
Furthermore $n_j \geq 0$ denotes the number of $j$th-order boundary conditions for $j=0,\dots, 2m-1$.

We now follow \cite{BraunerHulshofLunardi,Lunardi2002385} in making the following assumptions  on the fully nonlinear terms $F$ and $G_j$ as well as on the smoothness of the coefficients:
\begin{list}{}{\leftmargin=1.0cm\topsep=0.2cm\itemsep=0.1cm\labelwidth=1cm}
\item[(H1)]
$F:B(0,R)\subset C^{2m}(\overline{\Omega})\rightarrow C(\overline{\Omega}) $ is $C^1$ with Lipschitz continuous derivative, $F(0)=0, F'(0)=0,$ and the restriction of $F$ to $B(0,R)\subset C^{2m+\a}(\overline\Omega)$ has values in $C^\a(\overline\Omega) $ and is continuously differentiable.
\item[]
$G_j:B(0,R)\subset C^{m_{j}}(\overline{\Omega})\rightarrow C(\partial\Omega) $ is $C^2$ with Lipschitz continuous  second-order derivative,
$G_j(0)=0, G_{j}'(0)=0,$ and the restriction of $G_{j}$ to $B(0,R)\subset C^{2m+\a}(\overline\Omega)$ has values in $C^{2m+\a-m_j}(\partial\Omega) $ and is continuously differentiable.
\end{list}
\begin{list}{}{\leftmargin=1.0cm\topsep=0.1cm\itemsep=0.0cm\labelwidth=1cm}
\item[(H2)\,]
The elements of the matrix $a_\gamma (x)$ belong to $C^\a(\o).$
\item[] 
The elements of the matrix $b^{j}_\beta(x)$ belong to $C^{2m+\a-m_j}(\partial \Omega).$ 
\end{list}
In assumption (H1) we have written for simplicity $C^s(K)$ instead of $C^s(K)^N$ for $K=\overline{\Omega},\partial\Omega$. In the same way all function spaces in the following will be vector-valued with a dimension that is determined by the context.

Finally, let $B=(B_1,\dots, B_{mN})$ and $G=(G_1,\dots ,G_{mN})$.
\begin{rem}\label{rem01}
Note that assumption (H1) allows for very general nonlinearities; for instance, $F$ can depend on $ D^{\alpha}u (x_{0})$, where $x_0$ is a point in $\o$ with $\abs{\a}=2m$, which is a nonlocal dependence.
\end{rem}

 As one  guesses from our assumptions above, we are interested in  classical solutions and therefore we use the following  setting:
$$
X=C( \overline\Omega), \quad
X_0=C^\a(\o),\quad
X_1=C^{2m+\a}(\o) \,.  
$$
Note that $X_1 \hookrightarrow X_0 \hookrightarrow X$. We write  $\abs{\,\cdot\,}_j$ for the norm on $X_j$ ($j=0,1$) and $\abs{\,\cdot\,}$ for the norm on $X$. Additionally, let $Y$ be a normed vector space. Then the open ball of radius $r > 0$ centered at $u \in Y$ will be denoted by $B_{Y}(u,r)$.
 
Let us now denote by $\mathcal{E}\subset B_{X_1}(0,R)$ the set of stationary solutions (equilibria) of \eqref{eq1}, i.e.,
\begin{equation}\label{set of equilibria E}
u\in \mathcal{E}\iff  u\in B_{X_1}(0,R)\,, \: Au=F(u)\quad \mbox{in } \Omega \quad \mbox{and} \quad Bu=G(u)\quad \mbox{on }\partial \Omega \,.
\end{equation}
It follows from  assumption (H1) that $u_* \equiv 0$ belongs to $ \mathcal{E}$. Although  $u_*$ is zero, we will often write $u_*$ instead of $0$ to emphasize that we deal with an equilibrium.

We follow \cite{Pruss20093902} in assuming that $u_*$ is contained in a $k$-dimensional manifold of equilibria, i.e.,  we assume that there is a neighborhood $U\subset \mR^k$ of $0\in U$, and a $C^2$-function $ \Psi:U\rightarrow X_1$, such that 
\begin{alignat}{1}
   \bullet \quad &\Psi (U)\subset \mathcal{E} \text{ and } \Psi(0)=u_*\equiv         0 , \nonumber\\
   \bullet \quad & \text{the rank of $\Psi'(0)$ equals $k$,} \nonumber\\
   \bullet \quad & A\Psi (\zeta)= F(\Psi(\zeta )) \quad \mbox{in }\Omega,
        \quad \mbox{for all }\zeta\in U, \label{def equilibria1}\\
   \bullet \quad & B\Psi (\zeta)= G(\Psi(\zeta )) \quad \mbox{on }\partial\Omega,         \quad \mbox{for all }\zeta\in U . \label{def equilibria2}
\end{alignat}

In addition we finally require that there are no other stationary solutions near $u_*$ in $X_1$ than those given by $\Psi(U)$, i.e., for some $r_1>0$,
$$\mathcal{E}\cap B_{X_1}(u_*,r_1)=\Psi(U) \, .$$
The linearization of \eqref{eq1} at $u_*$ is given by the operator $A_0$ which is the realization of A with homogeneous boundary conditions in $X=C(\o)$, i.e., the operator with domain
\begin{eqnarray}\label{linear operator}
\begin{array}{l}
D(A_0)=\Big\{ u\in C(\o)\cap\bigcap\limits_{1< p<+\infty }W^{2m,p}(\Omega):\,\ Au\in X, \quad Bu=0 \text{ on } \partial \Omega\Big\},\\[20pt]
\quad A_0u= A u,\quad u\in D(A_0)\,,
\end{array}
\end{eqnarray}
where we  used the fact that $F'(0) = G'(0) = 0$. Note that by assumption (H2), we have 
$$
 \left. A_{0}\right|_{C^{2m+\a}(\o)}: \left.C^{2m+\a}(\o)\right|_{N(B)}\rightarrow C^\a(\o) \,.
$$
\begin{rem}\label{isolated}
Since $\Omega$ is bounded, $D(A_{0})$ is compactly embedded into $C(\o)$, the resolvent operators $(\lambda I-A_{0})^{-1}$ are compact for all $\lambda \in \rho(A_0)$, and the spectrum $\sigma ( A_0 )$ consists of a sequence of isolated eigenvalues.
\end{rem}
Next we turn to  the property of the optimal regularity in  the parabolic Hölder spaces.  To this end it is just enough  to  take care of the principal parts of the linear operators $A$ and $B$, i.e.,
\begin{align*}
A_*(x, D)    &=\sum_{|\gamma|= 2m} i^{2m}a_\gamma(x)D^{\gamma} \,, \\
B_{j*}(x, D) &=\sum_{|\beta|= m_{j}}i^{m_j}b^{j}_\beta(x)D^{\beta}\,, \quad (j=1,\dots,mN)
\end{align*}
where  $D = -i\nabla $. With this notation we have $\nabla^{\beta} = i^{|\beta|} D^\beta$.  Based on the results of V.A. Solonnikov
\cite{Solonnikov19653},  the following conditions, i.e., strong parabolicity of  $A_* $\ and  the Lopatinskii-Shapiro
condition for $(A_*, B_*)$ are sufficient for H\"older-optimal regularity of $A_0$, see  Theorem VI.21 in  \cite{Eidelman1998298}: 
\begin{list}{}{\leftmargin=1.0cm\topsep=0.2cm\itemsep=0.1cm\labelwidth=1cm}
\item[(SP)]$A $ is strongly parabolic:
For all $x\in \o, \, \,\xi\in \mR^n ,| \xi |=1, \, $$$\sigma(A_*(x,\xi))\subset \mathbb{C}_+\,.$$
\item[(LS)]
(Lopatinskii-Shapiro condition) For all $x\in \partial \Omega,\ \xi \in \mR ^n$, with $\xi\cdot\nu (x)=0  , \lambda\in\overline{\mathbb{C}_+},\, \lambda\neq 0$, and $h \in \mathbb{C}^{mN},$ the system of ordinary differential equations on the half-line 
\begin{align*}
\lambda v(y)+ A_*(x, \xi+i\nu(x )\partial _y)v (y )= &0\,, && y>0,\\
 B_{j*}( x, \xi+i\nu(x )\partial _y)v (0 )=& h_{j}\,, && j= 1,\dots , mN \,,
\end{align*}
admits a unique solution $v\in C_0\l\mR_0^+;\mathbb{C}^N\r$,
\end{list}
where $C_0\l\mR_0^+;\mathbb{C}^N\r$ is the space of continuous functions which vanish at infinity.
\begin{rem}
The  strong  parabolicity condition, i.e., (SP) implies  the root condition  (cf. Amann \cite[Lemma 6.1]{Amann1985} or Morrey \cite[P. 255]{Morrey}).  Concerning the Complementing Condition (LS),  here it is formulated in a non-algebraic way but one can find the equivalence of this formulation to the algebraic formulation in Eidelman and Zhitarashu \cite[Chapter I.2]{Eidelman1998298}. See also Lemma 6.2 in \cite{Amann1985}. 
\end{rem}
We  continue by collecting  the following basic results on generation of analytic semigroups, the characterization of related interpolation spaces and elliptic regularity in H\"older spaces for the associated elliptic systems:\begin{theo}\label{known result}
Under the conditions  (H2),(SP) and (LS) the following statements   hold. \begin{description}
\item[(i)] 
The operator $-A_0$  is sectorial.
\item[(ii)]
For each $\theta \in (0,1) $ such that $2m\theta \notin \mathbb{N}$, we have \begin{align*}
D_{-A_0}(\theta, \infty)= \big \{ \varphi \in C^{2m\theta}(\o): \quad B_j \varphi =0 \text{ if } m_j \leq[2m\theta]\big \}
\end{align*} 
and the $C^{2m\theta}$-norm is equivalent to the $D_{-A_0}(\theta, \infty)$-norm.
\item[(iii)] For each $k=1, \dots , 2m-1$ we have 
\begin{align*}
C^k_\mathcal B (\o):=\{ \varphi \in C^{k}(\o): \quad B_j \varphi =0 \text{ if } m_j <k \} \hookrightarrow D_{-A_0}(\frac{k}{2m}, \infty)\,,
\end{align*}
where $C^k_\mathcal B (\o)$ is given the norm of $C^k(\o)$.
\item[(iv)] 
We have the inclusion 
\begin{align*}
\bigg \{ \varphi \in \bigcap_{p>1} W^{2m,p}(\Omega):\quad& A \varphi \in C^\a(\o), \quad B_j \varphi \in C^{2m+\a-m_j}(\partial \Omega) \,,\\& \quad j=1, \dots, mN \bigg \} \subset C^{2m+\a}(\o) 
 \end{align*}
and there exist a constant $C$ such that 
\begin{equation}\label{schauder estimate}
\| \varphi \| _{C^{2m+\a}(\o)} \leq C \bigg ( \|A \varphi \|_{C^\a(\o)}+ \| \varphi \|_{C(\o)}+\sum\limits_{j=1}^{mN} \|B_j \varphi \|_{C^{2m+\a-m_j}(\partial \Omega)}\bigg).
\end{equation}
\end{description}
\end{theo}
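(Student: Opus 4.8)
\noindent\textit{Plan of proof.} All four assertions are part of the classical theory of (parameter\nobreakdash-)elliptic boundary value problems in H\"older spaces and of the associated analytic semigroups, so the plan is to assemble them from \cite{Solonnikov19653,Eidelman1998298,BraunerHulshofLunardi,Lunardi2002385} (and the scalar results in the literature on analytic semigroups and optimal regularity) and to indicate where the passage from a single equation to an $N\times N$ system has to be checked. Two structural facts are used throughout: (SP) forces the principal symbol to be parameter\nobreakdash-elliptic in a sector $\Sigma_\varepsilon=\{\lambda\in\mC:\abs{\arg\lambda}\le\frac{\pi}{2}+\varepsilon\}\cup\{0\}$ for some $\varepsilon>0$ and, in particular, implies the root (Petrowsky) condition (cf. the Remark above, \cite[Lemma~6.1]{Amann1985}, \cite[p.~255]{Morrey}), while (LS) is exactly the complementing condition for the parameter\nobreakdash-dependent problem $(\lambda+A_*,B_*)$; hence the Agmon--Douglis--Nirenberg machinery and its parameter\nobreakdash-dependent refinement of Agranovich--Vishik/Solonnikov type \cite{Solonnikov19653} apply to $(A,B)$. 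Granting this, (i) follows by combining the parameter\nobreakdash-dependent a priori estimate (which on $X=C(\o)$ yields $\norm{(\lambda+A_0)^{-1}}_{\mathcal{L}(X)}\le C/(1+\abs{\lambda})$ for $\lambda\in\Sigma_\varepsilon$ with $\abs{\lambda}$ large, cf.\ Theorem~VI.21 of \cite{Eidelman1998298}) with the compactness of the resolvent and discreteness of $\sigma(-A_0)$ (Remark~\ref{isolated}); after a shift this is sectoriality, and $-A_0$ generates an analytic semigroup $e^{-tA_0}$ on $X$.

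I would establish (iv) next, since (ii) and (iii) build on it. Starting from the Sobolev embedding $\bigcap_{p>1}W^{2m,p}(\Omega)\hookrightarrow\bigcap_{0<\beta<1}C^{2m-1+\beta}(\o)$, a finite bootstrap with the interior and boundary ADN Schauder estimates for elliptic systems satisfying the root and complementing conditions — applied to localized, coefficient\nobreakdash-frozen problems and then summed up — upgrades a strong solution with $A\varphi\in C^\a(\o)$ and $B_j\varphi\in C^{2m+\a-m_j}(\partial\Omega)$ to $\varphi\in C^{2m+\a}(\o)$, together with the bound \eqref{schauder estimate}. This is the system version of the classical Schauder estimate; for the scalar case with precisely these hypotheses see \cite{Lunardi2002385,BraunerHulshofLunardi}, and for systems Morrey \cite{Morrey} and Solonnikov \cite{Solonnikov19653}.

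For (ii) the plan is to identify $D_{-A_0}(\theta,\infty)=(X,D(A_0))_{\theta,\infty}$ for $2m\theta\notin\mathbb{N}$ as in the scalar case. The inclusion ``$\supseteq$'' is obtained by splitting a given $\varphi\in C^{2m\theta}(\o)$ with $B_j\varphi=0$ for $m_j\le[2m\theta]$ as $\varphi=a_t+b_t$ with the correct $K$\nobreakdash-functional decay, using a mollification that respects the boundary conditions and the Schauder bound (iv) to control $\norm{A_0 b_t}_X$. The inclusion ``$\subseteq$'' uses the a priori embedding $D_{-A_0}(\theta,\infty)\hookrightarrow C^{2m\theta}(\o)$ (from the semigroup smoothing together with (iv)) and the boundedness of the traces $\varphi\mapsto B_j\varphi$ on $D_{-A_0}(\theta,\infty)$, which forces $B_j\varphi=0$ for $m_j\le[2m\theta]$ because it holds on $D(A_0)$. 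Alternatively one may quote the interpolation identities for H\"older spaces with boundary conditions (Grisvard, Seeley), which is the route I would also use for (iii).

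Indeed, (iii) is the borderline case $2m\theta=k\in\{1,\dots,2m-1\}$. I would pick $\delta>0$ small with $k\pm\delta\notin\mathbb{N}$, set $\theta_\pm=(k\pm\delta)/2m$ so that $[2m\theta_-]=k-1$ and $[2m\theta_+]=k$, and use (ii) to write $D_{-A_0}(\theta_-,\infty)=C^{k-\delta}(\o)\cap\{B_j\cdot=0:m_j<k\}$ and $D_{-A_0}(\theta_+,\infty)=C^{k+\delta}(\o)\cap\{B_j\cdot=0:m_j\le k\}$. The interpolation theorem for H\"older spaces with boundary constraints then gives $C^k_{\mathcal{B}}(\o)\hookrightarrow\big(D_{-A_0}(\theta_-,\infty),D_{-A_0}(\theta_+,\infty)\big)_{1/2,\infty}$ — only the constraints of strict order $<k$ survive to the interpolation space, which is precisely why $C^k_{\mathcal{B}}$ is defined with the strict inequality — and the reiteration theorem identifies the right\nobreakdash-hand side with $D_{-A_0}(k/2m,\infty)$. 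The main obstacle I foresee is not a single hard estimate but the systematic bookkeeping: one must verify that every scalar tool — the ADN a priori estimates, the trace theorems, and the interpolation identities for H\"older spaces with boundary conditions — has an $N\times N$ analogue valid under (SP) and (LS), and, above all, one must handle the critical H\"older exponents $2m\theta\in\mathbb{N}$ correctly, showing that a boundary operator of order exactly $2m\theta$ imposes no constraint on $D_{-A_0}(\theta,\infty)$, which is what makes the strict inequality in the definition of $C^k_{\mathcal{B}}$ the right one.
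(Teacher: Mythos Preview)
Your proposal is correct and, like the paper, treats the theorem as an assembly of classical results for parameter-elliptic systems in H\"older spaces. The paper's own proof is purely a list of citations: (i) and (ii) are referred to a remark of Acquistapace--Terreni, (iii) to Acquistapace's direct characterization of $D_{-A_0}(k/2m,\infty)$ in the integer case, and (iv) to Agmon--Douglis--Nirenberg~II for the Schauder estimate together with the existence theory of Geymonat--Grisvard for the regularity inclusion. Your sketch differs in two places. For (iii) you propose a reiteration argument between the non-integer cases $\theta_\pm=(k\pm\delta)/2m$ obtained from (ii); this is a legitimate alternative, but it shifts the burden to an interpolation result for H\"older spaces with boundary constraints---precisely the subtlety you flag at the end---whereas the paper sidesteps this entirely by quoting Acquistapace's direct characterization of the borderline space. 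For the inclusion in (iv) you propose a bootstrap from $\bigcap_{p>1} W^{2m,p}$ via localized ADN estimates, while the paper invokes the Geymonat--Grisvard existence theorems instead. Both routes work; the paper's is shorter to state, yours is more self-contained and makes explicit where the passage from scalar equations to $N\times N$ systems needs checking.
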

\begin{proof}
   The proof is an adaptation of the proof of \cite[Theorem~5.2]{lunardisinestrariwah}, where the case of a single elliptic equation is proved.   Concerning (i) and (ii), see \cite[Remark 5.1]{AcquistapaceTerreni}. (iii) follows  from the characterization of $D_{-A_0}(\frac{k}{2m}, \infty)$ provided in \cite{Acquistapace}, see precisely Remark 5.1 in \cite{Acquistapace}. In order to prove (iv) one uses that the results of \cite{AgmonDouglisNirenbergII} imply the estimate \eqref{schauder estimate}. Moreover, the inclusion in $C^{2m+\a } ( \o )$ is a consequence of the existence theorems in \cite[Section 5]{GeymonatGrisvard}. \end{proof}

Let us now differentiate \eqref {def equilibria1} and \eqref{def equilibria2} w.r.t. $\zeta$ and evaluate  them at $ \zeta = 0 $ to obtain  
\begin{equation}\label{psi}
\left\{
 \begin{aligned}
    A\Psi'(0) &=0 & &\mbox{ in }\Omega \,,\\
    B\Psi'(0) &=0 & &\mbox{ on }\partial\Omega.
 \end{aligned}
\right.
\end{equation}
We therefore see that    the range  $R ( \Psi'(0 ) )$ is contained in the null space  $ N(A_0 )$ of $A_0$. In other words 
\begin{equation} \label{inclusion N}
T_{u_*}(\mathcal{E}) \subseteq N(A_0) \,,
\end{equation}
where $T_{u_*} ( \mathcal E )$  represents the tangent space of $\mathcal E$ at the point $u_*$.

Finally we make an additional assumption on the coefficient $b^j_\beta$ known as \textquoteleft normality condition\textquoteright, which   will be used in the construction of the extension operator presented in the appendix:  
\begin{eqnarray}\label{normality condition}
\left\{
\begin{array}{l}
\text{for each } x\in \partial \Omega, \text{ the matrix } 
  \begin{pmatrix}
  \sum_{| \beta | = k}b^{j_1}_\beta(x)(\nu(x))^\beta\\ \vdots \\ \sum_{| \beta | = k}b^{j_{n_{k}}}_\beta(x)(\nu(x))^\beta
  \end{pmatrix}
  \text{is surjective},\quad\\ [0.9cm]
  \text{where } \{j_i : i=1, \dots, n_k \}=\{j : m_j = k \} \,. 
\end{array}
\right.
\end{eqnarray}
Note that $b_\beta^j(x)\in\mathbb{R}^N$ for all $x\in\partial\Omega$.
\begin{rem}

In general, the normality condition \eqref{normality condition} is not implied by the (L-S) condition, see e.g. \cite[Remark 1.1]{Acquistapace}.\end{rem}
 In the following, the compatibility conditions  read as follows.
For $j$ such that $m_j=0$ and $ x\in \partial \Omega$
\begin{equation} \label{compatibility}
\left\{
 \begin{aligned}
    B u_0                  &= G( u_0 )\,, \\
    B_j( A u _0- F( u_0 )) &= G_j' (u_0)(A u_0-F(u_0)) \,.\\
 \end{aligned}
\right.
\end{equation}

\section{Main result} \label{Main Result}
This section is devoted to the statement and proof of  our main theorem on stability of stationary solutions of the nonlinear system \eqref{eq1}.\begin{theo}\label{theo01}
Let $ u_* \equiv 0 \in X_1$ be a stationary solution  of \eqref{eq1}, and assume that the regularity conditions (H1), (H2), Lopatinskii-Shapiro
condition  (LS), strong parabolicity (SP) and finally the normality condition \eqref{normality condition} are satisfied.    Moreover let $A_0$ denote the
linearization of \eqref{eq1} at  $u_*\equiv0$  defined in \eqref{linear operator}, and require that $u_*$ is normally stable, i.e., suppose that
\begin{description}
\item[(i)]
near $u_*$ the set of equilibria $ \mathcal{E}$ is a $C^{2}$-manifold  in $X_1$ of dimension $k\in \mathbb{N}$,
\item[(ii)]
the tangent space of $ \mathcal{E}$ at $u_*$ is given by $N\l A_0\r$,
\item[(iii)]
the eigenvlaue $0$ of $A_0$ is  semi-simple, i.e., $R\l A_0\r\oplus N \l A_0 \r= \MakeUppercase{X,}$
\item[(iv)]
$\sigma \l A_0\r \backslash\ \{0\}\subset \mathbb{C}_+=\{z\in \mathbb{C}: \mbox{Re }z>0\}.$
\end{description}
Then the stationary solution $u_*$ is stable in $X_1$. Moreover, if $u_0$ is sufficiently close to $u_*$ in $X_1$ and  satisfies the compatibility conditions \eqref{compatibility}, then  the unique solution $u\l t\r $ of \eqref{eq1} exists globally and approaches some  $u_\infty\in \mathcal{E}$ exponentially fast in $X_1$ as $t \to \infty$.
\end{theo}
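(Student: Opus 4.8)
The plan is to follow the strategy of \cite{Pruss20093902,pruss2009612}: decouple \eqref{eq1} by spectral projections into a finite-dimensional ``central'' part carried by $N(A_0)$ and an exponentially damped ``stable'' part carried by $R(A_0)$, rewrite the flow in normal form near $\mathcal E$, and then close a Gronwall-type estimate to get global existence together with exponential convergence. First I would record the consequences of the normal-stability hypotheses (i)--(iv): by Remark \ref{isolated} the spectrum of $A_0$ is a sequence of isolated eigenvalues and $N(A_0)$ is $k$-dimensional, and by (iii) the eigenvalue $0$ is semi-simple, so $X=N(A_0)\oplus R(A_0)$ with bounded spectral projections $P^c$ onto $N(A_0)$ and $P^s=I-P^c$ onto $R(A_0)$. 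These commute with $A_0$, and since the eigenfunctions in $N(A_0)$ are smooth by the elliptic regularity in Theorem \ref{known result}(iv), $P^c$ and $P^s$ leave $X_1$ (and the interpolation spaces $D_{-A_0}(\theta,\infty)$) invariant. By (iv) the part $A_s:=-A_0|_{P^sX}$ generates an analytic semigroup with $\|e^{-tA_s}\|_{\mathcal L(P^sX)}+\|e^{-tA_s}\|_{\mathcal L(P^sX_1)}\le M e^{-\omega t}$ for some $\omega>0$. Finally, since by (ii) the tangent space $T_{u_*}\mathcal E=N(A_0)$, the implicit function theorem applied to $\Psi$ represents $\mathcal E$ near $u_*$ as a graph over $N(A_0)$: there is a $C^2$-map $\psi\colon B_{N(A_0)}(0,r_0)\to P^sX_1$ with $\psi(0)=0$, $\psi'(0)=0$ and $\mathcal E\cap B_{X_1}(u_*,r_1)=\{z+\psi(z):z\in B_{N(A_0)}(0,r_0)\}$.

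Next I would pass to adapted coordinates. For a solution $u$ of \eqref{eq1} staying near $u_*\equiv0$ set $z:=P^cu\in N(A_0)$ and $v:=P^su-\psi(z)\in P^sX$, so $u=z+\psi(z)+v$ and $v$ measures the distance of $u$ to $\mathcal E$. Applying $P^c$ and $P^s$ to \eqref{eq1} and using $F'(0)=G'(0)=0$ produces the normal form \eqref{normal form},
\[
\dot z = T(z,v), \qquad \dot v + A_s v = R(z,v),
\]
where $T,R$ collect the nonlinear contributions. The crucial point, and the one that needs a genuinely new argument here, is that the abstract assumption $(A_2)$ of \cite{pruss2009612} fails in the parabolic H\"older setting, so the estimates on $T$ and $R$ required for the Gronwall step cannot be quoted; instead I would derive them directly from the smoothness in (H1), in the spirit of \cite[Proposition 10]{LatushkinPrussSchnaubelt}. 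Since every $z+\psi(z)$ is an equilibrium one has $T(z,0)=0$ and $R(z,0)=0$, and Taylor expansion using the Lipschitz continuity of $F'$, $G_j'$, $G_j''$ together with the Schauder estimate \eqref{schauder estimate} gives, on a small ball and in the parabolic-H\"older norms, bounds of the form $\|T(z,v)\|\le C\|v\|$ and $\|R(z,v)\|_{X_0}\le C(\|z\|+\|v\|_{X_1})\|v\|_{X_1}$, together with analogous Lipschitz-in-$(z,v)$ estimates; this is exactly the content of Proposition \ref{TRS}.

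With the normal form and these estimates in hand I would run the continuation/fixed-point argument. Local well-posedness of \eqref{eq1} for data satisfying the compatibility conditions \eqref{compatibility} is furnished by the cited H\"older theory and yields a solution in $\mathbb E_1(J)=C^{1+\frac{\a}{2m}}(J,X)\cap B(J,X_1)$ on some $J=[0,\tau]$. For the corresponding $(z,v)$ I would use the variation-of-constants formula $v(t)=e^{-tA_s}v(0)+\int_0^t e^{-(t-s)A_s}R(z(s),v(s))\,ds$, the exponential bound on $e^{-tA_s}$, parabolic-H\"older maximal regularity for $\partial_t+A_s$, the estimates of Proposition \ref{TRS}, and the asymptotic results for linear inhomogeneous systems proved in the appendix, to show in an exponentially weighted norm that if $\|u_0-u_*\|_{X_1}$ is small enough then $z$ stays small, $v$ decays like $e^{-\omega' t}$ in $X_1$ for some $0<\omega'<\omega$, and hence $\|\dot z(t)\|\le C e^{-\omega' t}$, so $z(t)\to z_\infty$ exponentially; a standard continuation argument then gives global existence and $X_1$-stability of $u_*$. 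Because $\mathbb E_1(J)$ is \emph{not} continuously embedded in $C(J,X_1)$, so the reasoning of step (f) of \cite{pruss2009612} does not transfer verbatim, I would here invoke the existence theorem on arbitrarily large time intervals (Proposition \ref{existence at large}) and uniform-in-time a priori bounds to justify the passage $t\to\infty$ and to upgrade the convergence to $X_1$. Finally $u(t)=z(t)+\psi(z(t))+v(t)\to z_\infty+\psi(z_\infty)=:u_\infty\in\mathcal E$ exponentially fast in $X_1$, which is the assertion. I expect the two principal obstacles to be precisely the two flagged in the introduction: proving Proposition \ref{TRS} (the $T$--$R$ estimates) from the raw smoothness assumptions rather than from $(A_2)$, and coping with the failure of $\mathbb E_1(J)\hookrightarrow C(J,X_1)$ in the convergence step via the large-time existence result and the appendix's linear asymptotic theory.
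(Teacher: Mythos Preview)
Your proposal follows the paper's strategy closely---spectral decomposition, graph representation of $\mathcal E$ over $N(A_0)$, normal form, weighted estimates via Proposition \ref{TRS}, and the continuation argument using Proposition \ref{existence at large} and the appendix asymptotics---and you have correctly identified the two technical obstacles signaled in the introduction.

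There is, however, one genuine omission that needs to be repaired. Your normal form reads $\dot z = T(z,v)$, $\dot v + A_s v = R(z,v)$, and you then write a variation-of-constants formula $v(t)=e^{-tA_s}v(0)+\int_0^t e^{-(t-s)A_s}R\,ds$ as if $v$ satisfied homogeneous boundary conditions. It does not: the whole point of this paper, as opposed to the abstract setting of \cite{pruss2009612}, is that the boundary conditions are \emph{nonlinear}, so after the change of variables the stable part $w=P^su-\phi(P^cu)$ satisfies $Bw=S(v,w)$ with $S(v,w)=G(v+\phi(v)+w)-G(v+\phi(v))$, which is in general nonzero. The normal form is therefore a \emph{three}-component system (the equation for $v$, the equation for $w$, and the boundary condition $Bw=S(v,w)$), and the linear estimate you need is not a plain semigroup bound on $e^{-tA_s}$ but the weighted maximal-regularity estimate of Proposition \ref{S-M-R}, which takes the boundary data $g\in\mathbb F(a)$ as input. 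Correspondingly, Proposition \ref{TRS} has a third part, $\|e^{\sigma t}S(v,w)\|_{\mathbb F(a)}\le C(r)\|e^{\sigma t}w\|_{\mathbb E_1(a)}$, which is what makes the absorption step close. Without carrying $S$ and the $\mathbb F$-estimate through, the argument would only cover the case $G\equiv 0$ already handled abstractly in \cite{pruss2009612}.

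A smaller point: your displayed bound $\|R(z,v)\|_{X_0}\le C(\|z\|+\|v\|_{X_1})\|v\|_{X_1}$ is a pointwise-in-time estimate, but what the argument actually uses (and what Proposition \ref{TRS} provides) are estimates in the \emph{space-time} norms $\mathbb E_0(a)$, $\mathbb E_1(a)$, $\mathbb F(a)$ with exponential weights, of the form $\|e^{\sigma t}R\|_{\mathbb E_0(a)}\le C(r)\|e^{\sigma t}w\|_{\mathbb E_1(a)}$ with $C(r)\to 0$; these are derived via Proposition \ref{nonlinear estimate} and Lemma \ref{estimate} and cannot be obtained from purely pointwise bounds because the H\"older seminorm in time must be controlled.
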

\begin{proof}
We follow the strategy of \cite{Pruss20093902,pruss2009612}, i.e., to reduce the system \eqref{eq1} to its normal form by means of a near-identity, nonlinear transformation of variables. This in turn makes it easier to analyze the system. The proof will be done in  steps (a)-(g) and some intermediate results will be formulated as lemmas and propositions.\\[0.15cm]
\textbf{(a)} 
According to  Remark \ref{isolated}, $0 \in \sigma ( A_0 )$ is isolated  in $\sigma\l A_0\r$ which together with assumption (iv)  gives the following decomposition of   
$$
\sigma\l A_0\r=\{0\}\cup \sigma_s, \quad \sigma_s\subset \mathbb{C}_+=\{z\in \mathbb{C}: \mbox{Re }z>0\}.
$$
into two disjoint pieces.

Let  $P^{l},\, l\in \{c,s\}$, be the spectral projections associated to  $\sigma_c=\{ 0\}$ and $\sigma_{s}$, i.e., \begin{equation} \label{spectral projection}
   P^c=\frac{1}{2\pi i }\int_\gamma R(\lambda, A_0 )\,\mathrm{d} \lambda  \qquad \text{ and } \qquad P^s = I - P^c 
\end{equation} 
(see \cite[Definition A.1.1]{Lunardi1995424}).
  We set $X_j^l:=P^lX_j$ and $X^l:=P^l X$ for $l\in \{c,s\}$ and $j\in\{0,1\}$, equipped with the norms $\abs{\, \cdot\,}_j$ and  $\abs{\, \cdot\,}$ respectively for $j \in \{ 0,1 \}$.  Moreover we define the part of $A_0$ in $X^l$ by    $$
   A_l  = P^l A_{0} P^l  \quad \text{ for }  l \in \{ c, s \}\,.
   $$
\begin{lem}\label{projection}
$\left.P^c\right|_{C^\a(\o)}\in \mathcal{L}(C^{\a}(\o), C^{2m+\a}(\o)) $
\end{lem}
\begin{proof}
At first  we show $\left.R(\lambda, A_{0})\right|_{C^\a(\o)}:C^{ \a}(\o)\rightarrow C^{2m+\a}(\o)$ for $\lambda\in \rho (A_{0}) $. If we take $f\in C^\a (\o) $ and define $u:=R(\lambda , A_0)f$, then $u \in D(A_0)$ and $u$ solves 
\begin{equation*}
\left\{ 
 \begin{aligned}
    ( \lambda I - A) u &=f\in C^\a(\o) \,, \\
    B u&=0 \,.
 \end{aligned}
\right.
\end{equation*}
 By  the elliptic regularity theory precisely Theorem \ref{known result} (iv)   we get $u\in C^{2m+\a}(\o)$ and 
\begin{align*}
\| u \|_{C^{2m+\a}(\o)} \leq C(\|f\|_{C^\a(\o)} + \|u\|_{C(\o)}) \,.
\end{align*}
In other words,
\begin{align*}
\| R(\lambda, A_0) f \|_{C^{2m+\a}(\o)} \leq C(\|f\|_{C^\a(\o)} + \|R(\lambda, A_0) f \|_{C(\o)}) \,.
\end{align*}
And now by \eqref{spectral projection} and the fact that $R(\lambda, A_0) \in \mathcal{L}(X,X)$, the claim follows.
\end {proof}
Note that Lemma \ref{projection} in particular implies $\left.P^l\right|_{C^{2m+\a}(\o)}\subset C^{2m+\a}(\o)$ for $l\in \{c,s\}$.
Since $0$ is a semi-simple eigenvalue of $A_0$, we have $X^c=N(A_0)$ and $X^s=R(A_0)$ (see \cite[Proposition A.2.2]{Lunardi1995424}) and so $P^{c} $ and $P^{s}$ are the projections onto $N(A_0)$ respectively $R(A_0)$. Consequently  $A_c\equiv 0 $ which is equivalent to say $AP^c\equiv0 $ and $BP^c\equiv 0$.
Note that $N(A_0) \subset X_1$ by elliptic regularity precisely Theorem \ref{known result} (iv).

Since $X_0^c\hookrightarrow X^c\hookrightarrow  X_1 $, we get  $X_0^c=X_1^c=X^c=N(A)$. As $X^c$ is a finite dimensional vector space, all the norms are equivalent. Therefore we choose $\abs{\, \cdot\,}$ as a norm on $X^c$. Furthermore, we take as a norm on $X_j$ and $X$
\begin{equation} \label{norm on X}
\left\{
 \begin{aligned}
    \abs{u}_j &:=\abs{P^cu}+\abs{P^s u}_j & &\mbox{ for } j=0,1 \,, \\
    \abs{u}   &:=\abs{P^c u}+\abs{P^s u} \,.
 \end{aligned}
\right.
\end{equation}
\textbf{(b)} Next let us  demonstrate  that near $u_*$, the manifold $\mathcal{E}$ is   the graph of a function $\phi:B_{X^c}\l 0,\rho_0\r\rightarrow X_1^s$ . 
To this end we define the mapping  
$$
g:U\subset \mR ^k\rightarrow X^c , \quad g\l \zeta \r :=P^c\Psi\l \zeta \r, \quad\zeta \in U\,. 
$$
Taking into account the fact the  $\dim X^c = \dim \mR^k = k$, It can be easily seen by our assumptions that  $g'\l 0\r = P^c\Psi ' (0):\mR^k\rightarrow X^c$ is  bijective. Thus, we can apply the inverse function theorem to conclude   that  $g$ is a $C^2$-diffeomorphism of a neighborhood of $0$ in $\mR^k$ onto a neighborhood of $0$ in $X^c$,  which we choose as $B_{X^c}(0,\rho_0)$ for some $\rho_0>0$. 
Hence the inverse $g^{-1}:B_{X^c}(0,\rho_0)\rightarrow 
 U$ is $C^2$ and $g^{-1}(0)=0$. If we define $ \Phi(v):= \Psi (g^{-1}(v))$ for $ v\in B_{X^c}(0,\rho_0) $, we obtain $\Phi\in C^2 (B_{X^c}(0,\rho_0), X_1)$, $\Phi(0) = 0$ as well as
$$
 \{u_{*}+ \Phi(v):\, \, v\in B_{X^c}(0,\rho_0)\}= \mathcal{E}\cap W 
$$
It is easy to observe that,
$$
P^c\Phi(v)=\l\l P^c\circ \Psi \r \circ g^{-1} \r(v)= (g\circ g^{-1} )(v) = v, \quad v \in B_{X^c}(0,\rho_0),  
$$
Hence 
$$ 
\Phi(v)= P^c \Phi(v)+ P^s \Phi(v)= v+P^s \Phi(v)\quad \text{for all }v\in B_{X^c}(0,\rho_0).
$$
If we finally define $\phi(v):=P^s \Phi(v)$ and use the fact that $\Psi'(0 )(\mathbb{R}^k)\subseteq N(A_0 )$,   we obtain
\begin {equation}\label{graph}
\phi \in  C^2 (B_{X^c}(0,\rho_0), X_1^s), \quad \phi(0)=\phi'(0)=0,
\end {equation}
and 
\begin{equation}\label{equilibria}
\ \{u_{*}+ v+\phi(v):\, \, v\in B_{X^c}(0,\rho_0)\}= \mathcal{E}\cap W \,,
\end{equation}
 for some neighborhood $W$ of $u_*$ in $X_1$.

Hence we have  established our assertion, i.e., near $ u_* $ the manifold $\mathcal{E}$ can be represented as   the graph over its tangent space  $T_{u_*} \mathcal E = N ( A_0 ) = X^c$ via   the function $\phi$. Now  applying  $P^c$ and $P^s$,    equations for the equilibria of \eqref{eq1}, i.e., 
\eqref{def equilibria1} and \eqref{def equilibria2} is equivalent to the system 
\begin{eqnarray}
\begin{array}{c}
P^c A\phi(v)=P^c F(v+\phi(v) ),  \\[0.3cm]
P^s A\phi(v)=P^s F(v+\phi(v) ),\quad B\phi(v)= G(v+\phi(v)),
\end{array}
\label{n f equilibria}
\end{eqnarray}
 where
$v\in B_{X^c}(0,\rho_0)$. Here we have used the fact that $ v+\phi(v) = \Psi(g^{-1}(v)) $ for $v \in B_{X^c}(0,\rho_0)$ as well as $A_c\equiv 0$. 

 For later convenience we choose  $\rho_0 $ so small that \begin{equation}\label{phi}
\abs{\phi'(v)}_{\mathcal{L}(X^c, X_1^s)}\leq 1, \quad \abs{\phi(v)}_1\leq \abs{v}, \quad \text{ for all } v\in B_{X^c}(0,\rho_0). 
\end{equation}
For $ r \in (0, \rho_0)$, we set 
$$
\eta(r)=\text{sup}\{\norm{\phi'(\varphi)}_{\mathcal{L}(X^c, X^{s}_1)}:\varphi\in B_{X^c}(0,r)\}. 
$$
Since $\phi'(0)=0$, $\eta(r)$ tends to $0$ as $r\rightarrow 0$. Let $L' > 0$ be such that, for all $\varphi,$ $\psi\in B_{X^c}(0,r)$ with $r \in ( 0, \rho_0)$
$$
\norm{\phi'(\varphi)-\phi'(\psi)}_{\mathcal{L}(X^c,X^s_1)}\leq L'\abs{\varphi-\psi}.
$$ 

\textbf{(c)} Now we are in a position to  reduce the system \eqref{eq1} to    normal form. For this we let 
$$
v:=P^{c}u, \quad w:=P^{s}u -\phi(P^{c}u).
$$
As an immediate consequence of this change of variables we see that   
$$
\mathcal E  \cap W= B_{X^c} ( 0, \rho_0 ) \times \{ 0 \} \subset X^c \times  X_1^s \,.
$$ 

Under this nonlinear transformation of variables,  \eqref{eq1} is transformed into the following system 
\begin{equation} \label{normal form}
\left\{
 \begin{aligned}
    \partial _t v           &=T(v,w)              & &\mbox{ in }\Omega, \\
    \partial _t w+P^s AP^sw &=R(v,w)              & &\mbox{ in }\Omega, \\
    Bw                      &=S(v,w)              & &\mbox{ on }\partial\Omega,\\     v(0)                    &=v_0, \quad w(0)=w_0 & &\mbox{ in }\Omega,
 \end{aligned}
\right.
\end{equation}
with $v_0=P^cu_0$ and  $w_0=P^su_0-\phi(P^cu_0)$, where the function $T, R$ and $S$ are given by 
\begin{equation*}
\begin{array}{l}
T(v,w)=P^cF\big( v+\phi(v)+w\big)-P^c A\phi(v)-P^cAw, \\[0.1cm]
R(v,w)= P^sF\big( v+\phi(v)+w\big)-P^s A\phi(v)-\phi'(v)T(v,w), \\[0.1cm]
S(v,w)=G\big( v+\phi(v)+w\big)-B\phi(v). \\[0.1cm]
\end{array}
\end{equation*}
Now we  need to rewrite the expression for $T, R$ and $S$ into the  following more useful  form: 
\begin{equation*}
\begin{array}{l}
T(v,w)=P^c\big( F\big( v+\phi(v)+w\big)-F\big(v+\phi(v)\big)\big)-P^cAw, \\[0.1cm]
R(v,w)= P^s\big( F\big( v+\phi(v)+w\big)-F\big(v+\phi(v)\big)\big)-\phi'(v)T(v,w), \\[0.1cm]
S(v,w)=G\big( v+\phi(v)+w\big)-G\big(v+\phi(v)\big), 
\end{array}
\end{equation*}
where we have benefited from  the equilibrium equations in \eqref{n f equilibria}.

Clearly, 
$$
R(v,0)=T(v,0)=S(v,0)=0, \quad v\in B_{X^c}(0,\rho_0). 
$$

\textbf{(d)} We shall use the parabolic H\"older spaces
$$
\mathbb{E}_1(a):=C^{1+\frac{\alpha}{2m}, 2m+\alpha}(I_a\times\o)=C^{1+\frac{\alpha}{2m}}(I_a, X)\cap B(I_a, X_{1}) \, ,
$$
$$
\mathbb{E}_0(a):=C^{\frac{\alpha}{2m},\alpha}(I_a\times\o)=C^{\frac{\alpha}{2m}}(I_a, X)\cap B(I_a, X_{0}) \,.
$$
Here $0<a\le\infty$,
\begin{equation*}
I_a:=
\begin{cases}
[0,a] & \text{ for } a>0, \\
[0,\infty) & \text{ for } a = \infty \, ,
\end{cases}
\end{equation*}
and $B(I_a , X_{j})$  is a space of all bounded functions $f: I_a \rightarrow X_j$ equipped with the supremum norm. 

  Similarly we introduce the following  spaces for functions defined on  the boundary   
\begin{align*}
   \mF_j(a):= &C^{1+\frac{\alpha}{2m}-\frac{m_{j}}{2m}, 2m+\alpha-m_j}
        (I_a\times\partial\Omega) \\ 
            = &C^{1+\frac{\alpha}{2m}-\frac{m_{j}}{2m}}\l I_a, C(\partial\Omega)\r\cap B\l I_{a},C^{ 2m+\alpha-m_j}(\partial\Omega)\r, 
\end{align*}
and 
\[
\mF(a)= \prod ^{mN}_{j=1}\mF_j(a).
\]

By \eqref{norm on X} you can easily show that $\norm{p_l u}_{\mE_i(a)}\leq\norm{u}_{\mE_i(a)}$ for $i=0,1$ and $l\in \{c,s\},$  which we will use  several times without mention it. The proof of the following Lemma is given  in \cite[Theorem 2.2]{lunardisinestrariwah}. 
\begin{lem}\label{parabolic estimate}
The following continuous embedding holds with an embedding constant independent of $a,$ with $0<\theta<2m+\a$.
\begin{align*}
\mathbb{E}_1(a)\hookrightarrow C^{\frac{\theta}{2m}}(I_a, C^{2m+\a-\theta}(\o))\,.
\end{align*}

\end{lem}
We now state the optimal regularity theorem for  the linear system 

\begin{equation} \label{linear}
\left\{
 \begin{aligned}
    \partial _t u+Au &= f(t)  & &\mbox{in }\Omega,         & &t \in (0,a)         \,, \\ 
     Bu              &= g(t)  & &\mbox{on }\partial\Omega, & &t \in (0,a)         \,,\\
     u(0)            &= u_{0} & &\mbox{in }\Omega \,, \\
 \end{aligned}
\right.
\end{equation}
 in the parabolic H\"older setting
. See  Theorem VI.21 in \cite{Eidelman1998298}.\\ In the following we need the compatibility conditions 
\begin{equation} \label{compatibility linear}
\left\{
 \begin{aligned}
        B u_0                &= g(0) \,,\\
        B_j f(0) - B_j A u_0 &= \partial_t  g_j (t) |_{t=0}\quad \text{ for all } j \text{ such that } m_j = 0 \,.\\
 \end{aligned}
\right.
\end{equation}
\begin{pr}\label{M-R}
Fix $a<\infty$. The linear system \eqref{linear} has a unique solution $u\in \mE_1(a)$ if and only if $f\in \mE_0(a)$, $g\in\mF(a)$, $u_0\in X_1$, and the compatibility conditions \eqref{compatibility linear} are satisfied. Moreover there exist $ \widetilde C=\widetilde C(a)>0$ such that $$
\norm{u}_{\mE_1(a)}\le \widetilde C \big(\abs{u_0}_1+\norm{f}_{\mE_0(a)}+\norm{g}_{\mF(a)}\big).
$$
\end{pr}
We  turn next to  the problem of global in time existence  for the system
\begin{equation}\label{linear stable}
\left\{
 \begin{aligned}
    \partial _t w+P^s AP^s w &= f(t)  & &\mbox{in }\Omega \,,         & &t>0,         \\
     Bw                      &= g(t)  & &\mbox{on }\partial\Omega \,, & &t>0,         \\
     w(0)                    &= w_{0} & &\mbox{in }\Omega \,,
 \end{aligned}
\right.
\end{equation}
where $t \in ( 0, \infty ]$. 
\begin{pr}\label{S-M-R}
Let $0<a\leq\infty$ and  $0<\sigma< \omega $, where $\omega=$ inf $\{$Re$\lambda: \lambda \in \sigma _{s}\}$. The linear problem \eqref{linear stable} has a unique solution $w$ such that $e^{\sigma t}w \in \mathbb{E}_1(a)$ if and only if $e^{\sigma t}f\in C^\frac{\a}{2m}(I_{a};X)\cap B(I_{a}; X_0^s)$, $e^{\sigma t}g\in \mathbb{F}(a)$, $w_0\in X_1^s$, and the compatibility conditions \eqref{compatibility linear} are satisfied. Moreover there exists a constant $C_{0} $, independent of $a$, such that $$
\norm{ e^{\sigma t }w}_{\mE_1(a)}\le C_{0} \Big(\abs{w_0}_1+\norm{e^{\sigma t }f}_{\mE_0(a)}+\norm{ e^{\sigma t } g}_{\mF(a)} \Big) \,.
$$
\end{pr}
\begin{proof}
To show the "only if" part, use the system of equations \eqref{linear stable}. Let us prove the "if" part.  First observe that if  $u$ solves \eqref{linear} with $u_0=w_0$ then  the function $w=P^su$  solves problem \eqref{linear stable}. Now let us denote by  $u_1$ the solution of \eqref{linear} with $A + 1$ replacing $A$. Since $\sigma ( A_0 + 1 ) \subset \mC_+$, applying    Corollary \ref{corollary asymptotic} below, we get a uniform bound for $e^{\sigma t}u_1$ in $\mathbb{E}_1(\infty)$.

Setting $ u_2=u-u_1$ we find that $z=P^s u_2$ solves the problem
\begin{equation}\label{P^s u_2}
\partial_t z+ P^s A P^s z=P^s u_1, \quad Bz=0, \quad z(0)=0.
\end{equation}
Let $u_3$  denote the solution of 
\begin{equation}\label{extra equation}
\partial_t z+  A  z=P^s u_1, \quad Bz=0, \quad z(0)=0.
\end{equation}
By applying Theorem \ref{asymptotic behavior theorem for systems} below to \eqref{extra equation} with $f=P^s u_1,\, u_0=0$, $g=0$  we  find  a uniform bound for $e^{\sigma t}u_3$ in $\mathbb{E}_1(\infty)$ (it is easy to see that  \eqref{condition on u_0 system} holds). Now using the fact that $P^s u_3$ solves \eqref{P^s u_2}
we also obtain  a uniform bound for $e^{\sigma t}P^{s}u_3=e^{\sigma t}P^s u_2$ in $\mathbb{E}_1(\infty)$.
This finishes the proof.
\end{proof}
\textbf{(e)} Let us turn our attention    to the nonlinearities $T$, $R$ and $S$. Here we  derive estimates which are needed for applying Proposition \ref{S-M-R}. 

Let $0<r\leq R,$ and set  
\begin{align*}
K(r)=\text{sup}\{\norm{F^\prime (\varphi) }_{\mathcal{L}( C^{2m+\a}(\o),C^\a(\o))}:\,\varphi \in B(0,r)\subset C^{2m+\a}(\o)\},\\
H_{j}(r)=\text{sup}\{\norm{G_j^\prime (\varphi) }_{\mathcal{L}( C^{2m+\a}(\o),C^{2m+\a-m_j}(\partial\Omega))}:\,\varphi \in B(0,r)\subset C^{2m+\a}(\o)\},
\end{align*}
for $j= 1,\dots,mN$. Since $F^\prime(0)=0$ and $G_j^\prime (0)=0$, $K(r)$ and $H_j(r)$ tend to $0$ as $r\rightarrow 0$. Let $L>0$ be such that, for all $\varphi$, $\psi\in B(0,r)\subset C^{2m}(\o)$ with small $r$,
\begin{eqnarray*}
\norm{F^\prime (\varphi)-F^\prime(\psi) }_{\mathcal{L}( C^{2m}(\o),C(\o))}
\leq L\norm{\varphi-\psi}_{C^{2m}(\o)}, \\
\norm{G_j^\prime (\varphi)-G_j^\prime(\psi) }_{\mathcal{L}( C^{m_j}(\o),C(\partial \Omega))}
\leq L\norm{\varphi-\psi}_{C^{m_j}(\o)}.\\
\norm{G_j''(\varphi)-G_j''(\psi) }_{\mathcal{L}( C^{m_j}(\o),\mathcal{L}(C^{m_j}(\o),C(\partial \Omega))}
\leq L\norm{\varphi-\psi}_{C^{m_j}(\o)}.
\end{eqnarray*}

 In the following, we will always assume that $r \leq \text{min}\{R,\rho_0\}.$
\begin{lem}\label{T}
There exist a constant $C_{1}$ such that 
\begin{align*}
\abs{T(v,w)}\leq C_{1}\abs{w}_{1}
\end{align*}
for any $u\in \overline{B_{X_1}(0,r)}.$ 
\end{lem}
\begin{proof}
From  \eqref{phi} we see
$$
\abs{v+\phi(v)+w}_1=\abs{u}_1\leq r,\quad  \abs{v+\phi(v)}_1\leq \abs{v}_1+\abs{\phi(v)}_1\leq2r, 
$$
and now taking $z_1=v+\phi(v)+w $ and $z_2=v+\phi(v) $ in the definition of $T(v,w)$ we get
\begin{align*}
\abs{T(v,w)}&=\abs{P^c\l F(z_1)-F(z_2)\r}+\abs{P^cAw}\\
&\leq  \abs{F(z_1)-F(z_2)}_{0}+\norm{P^cA}_{\mathcal{L}(X_1, X^c)}\abs{w}_1\\
&\leq (K(2r)+C_{2})\abs{w}_1,
\end{align*}
where $C_{2}:=\norm{P^cA}_{\mathcal{L}(X_1, X^c)}$ which is not necessarily  small.
\end{proof}
\begin{pr}\label{nonlinear estimate}
If $z_1, z_2\in \overline{B_{\mathbb{E}_1(a)}(0,r)}$, $\sigma \geq 0$ then 
\begin{align*}
\begin{array}{c}
\norm{e^{\sigma t}(F(z_1)-F(z_2))}_{\mathbb{E}_0(a)}\leq D(r)\norm{e^{\sigma t}(z_1-z_2)}_{\mathbb{E}_1(a)},\\ \\
\norm{e^{\sigma t} (G(z_1)-G(z_2))}_{\mathbb{F}(a)}\leq D(r)\norm{e^{\sigma t }(z_1-z_2)}_{\mathbb{E}_1(a)},
\end{array}
\end{align*}
where $D(r) \rightarrow 0 $ as $r \rightarrow 0$.
\end{pr}

The proof is given in  the appendix.

\begin{lem}\label{estimate}
If $u \in \overline{B_{\mE_1(a)}(0,r)},$ then $v+\phi(v)\in \overline{B_{\mE_1(a)}(0,4r+L'r^{2})}$.
\end{lem}
\begin{proof}
For $0\leq t\leq  a $, again by  \eqref{phi}, we have
$$\abs{v(t)+\phi(v(t))}_{1}\leq2\abs{v(t)}_1\leq 2\abs{u(t)}_1\leq 2r     
$$
while for $0\leq s\leq t \leq a$,
\begin{align*}
&\abs{v'(t)+\phi'(v(t))v'(t)-v'(s)-\phi'(v(s))v'(s)}\\
&\leq \abs{v'(t)-v'(s)}+\abs{\phi'(v(t))\l v'(t)-v'(s)\r}+\abs{\phi'(v(t))-\phi'(v(s))}_{\mathcal{L}(X^c, X_1)}\abs{v'(s)}\\
&\leq 2(t-s)^\frac{\a}{2m}\norm{v}_{\mE_1(a) }+L' \abs{v(t)-v(s)}\abs{v'(s)} \\
&\leq2(t-s)^\frac{\a}{2m}\norm{v}_{\mE_1(a) }+L'(t-s)^\frac{\a}{2m}\norm{v}^{2}_{\mE_1(a) }\\
&\leq (t-s)^{\frac{\a}{2m}}\l 2r+L'r^{2}\r
\end{align*}
 Note that we have used   \eqref{phi} and Lemma \ref{parabolic estimate} to obtain the second inequality. This completes the proof.  
\end{proof}
\begin{pr}\label{TRS}
If $u\in \overline{ B_{\mathbb{E}_1(a)}(0,r)}$, $\sigma \geq 0$ then 
\begin{align*}
(i)\quad&\norm{e^{\sigma t} T(v,w)}_{\mathbb{E}_0(a)}\leq C_3\norm{e^{\sigma t} w}_{\mathbb{E}_1(a)},&\\
(ii)\quad&\norm{e^{\sigma t} R(v,w)}_{\mathbb{E}_0(a)}\leq C(r)\norm{e^{\sigma t} w}_{\mathbb{E}_1(a)},&\\
(iii)\quad&\norm{e^{\sigma t}S(v,w)}_{\mathbb{F}(a)}\leq C(r)\norm{e^{\sigma t} w}_{\mathbb{E}_1(a)},&
\end{align*}
where $C(r) \rightarrow 0$ as $r$ goes to zero.
\end{pr}
\begin{proof}
Let us prove (i). Setting $z_1:=u=v+\phi(v)+w $ and $z_2:=v+\phi(v) $  by Lemma \ref{estimate} we have $$
\norm{z_1}_{\mE_1(a)},\norm{z_2}_{\mE_1(a)}\leq 4r+L'r^{2}\,.
$$
Hence we can now apply Proposition  \ref{nonlinear estimate} to conclude
\begin{align*}
\norm{e^{\sigma t}T(v,w)}_{\mE_0(a)}&\leq \norm{P^c (e^{\sigma t}(F(z_1)-F(z_2)))}_{\mE_0(a)}+\norm{e^{\sigma t}P^cAw}_{\mE_0(a)}\\
&\leq  \norm{e^{\sigma t}(F(z_1)-F(z_2))}_{\mE_0(a)}+\norm{e^{\sigma t}P^cAw}_{\mE_0(a)}\\  
&\leq D(4r+L'r^{2}) \norm{e^{\sigma t}w}_{\mathbb{E}_1(a)}+ \norm{e^{\sigma t} P^c A w}_{\mE_0(a)} \,.
\end{align*}
Now let us consider $\norm{e^{\sigma t}P^cAw}_{\mE_0(a)}$.

For $0\leq t\leq  a $,$$
\abs{e^{\sigma t}P^cAw(t)}\leq C_2\abs{e^{\sigma t}w(t)}_1\leq C_2\norm{e^{\sigma t}w}_{\mE_1(0,a)}
$$
while for $0\leq s\leq t \leq a$,
\begin{align*}
\abs{ e^{\sigma t}P^cAw(t)-e^{\sigma t} P^cAw(s)}&\leq \abs{A(e^{\sigma t }w(t)-e^{\sigma s}w(s))}\leq \norm{e^{\sigma t }w(t)- e^{\sigma s}w(s)}_{D(A)}\\
&\leq\norm{e^{\sigma t}w(t)- e^{\sigma s}w(s)}_{C^{2m}(\o)}\\
&\leq C'(t-s)^{\frac{\a}{2m}} \norm{e^{\sigma t}w}_{\mE_1(a)}\,,
 \end{align*}
 where we have used Lemma \ref{parabolic estimate} to obtain the last inequality and $C'$ is the corresponding embedding constant. Setting $C_3:=D(4r+L'r^{2})+C'+C_2$\ we complete the proof of (i).

We now prove (ii). Similarly as in (i) we get for the first term in  $e^{\sigma t}R(v,w)$ 

\begin{equation*}
\norm{P^s (e^{\sigma t}(F(z_1)-F(z_2)))}_{\mE_0(a)}\leq D(4r+L'r^{2}) \norm{e^{\sigma t}w}_{\mathbb{E}_1(a)}\,. 
\end{equation*}
Let us estimate the second term in $R(v,w)$ namely, $e^{\sigma t} \phi'(v) T(v,w)$.

For $0\leq t\leq a,$ by \eqref{phi} and Lemma \ref{T} we have 
\begin{align*}
\abs{e^{\sigma t}\phi'(v(t))T(v(t),w(t))}_0&\leq \abs{e^{\sigma t}\phi'(v(t))T(v(t),w(t))}_1\\
&\leq \norm{\phi'(v(t))}_{\mathcal{L}(X^c, X_1^s)}\abs{e^{\sigma t}T(v(t),w(t))}\\
&\leq \eta(r)\abs{e^{\sigma t}T(v(t),w(t))}\leq C_1\eta(r)\abs{e^{\sigma t}w(t)}_1 \\
&\leq C_1 \eta(r)\norm{e^{\sigma t}w}_{\mE_1(a)}\,,
\end{align*}
while for $0\leq s\leq t \leq a$,
\begin{align*}
&\abs{e^{\sigma t}\phi'(v(t))T(v(t),w(t))-e^{\sigma s}\phi'(v(s))T(v(s),w(s))}\\ 
&\leq\norm{\phi'(v(t))}_{\mathcal{L}(X^c, X_1)}\abs{e^{\sigma t}T(v(t),w(t))-e^{\sigma s}T(v(s),w(s))}\\
& \quad +\norm{\phi'(v(t))-\phi'(v(s))}_{\mathcal{L}(X^c, X_1)}\abs{e^{\sigma s}T(v(s),w(s))}\\
&\leq(t-s)^{\frac{\a}{2m}}\eta(r)\norm{e^{\sigma t}T(v,w)}_{\mE_0(a)}+C_1
\norm{\phi'(v(t))-\phi'(v(s))}_{\mathcal{L}(X^c, X_1)}\abs{e^{\sigma t }w(t)}_1\\
&\leq (t-s)^{\frac{\a}{2m}}\eta(r)C_3\norm{e^{\sigma t }w}_{\mathbb{E}_1(a)}+C_1
L'\abs{v(t)-v(s)}\norm{e^{\sigma t}w}_{\mE_1(a)}\\
&\leq (t-s)^{\frac{\a}{2m}}\eta(r)C_3\norm{e^{\sigma t}w}_{\mathbb{E}_1(a)}+
(t-s)^{\frac{\a}{2m}}C_1L'\norm{v}_{E_1(a)}\norm{e^{\sigma t}w}_{\mE_1(a)}\\
&\leq (t-s)^{\frac{\a}{2m}}\l\eta(r)C_3+
C_1L'r\r\norm{e^{\sigma t}w}_{\mE_1(a)} \,.
\end{align*}
Finally by defining $C(r):=\eta(r)C_3+
C_1L'r+C_1 \eta(r)+D(4r+L'r^{2})$  we complete the proof of (ii). Using Proposition \ref{nonlinear estimate} and Lemma \ref{estimate}, we easily get (iii).
\end{proof}
\textbf{(f)  }We consider now an existence theorem for problem \eqref{eq1}. As a first step, we  show existence for large time using the contraction mapping principle. 
\begin{pr}\label{existence at large}
  For every $T>0$, there are $r > \rho > 0$ such that \eqref{eq1} has a solution $u\in \mathbb{E}_1(T)$ provided $\abs{u_0-u_*}_1\leq\rho.$ Moreover, $u$ is the unique solution in $B_{\mathbb{E}_1(T)}(0,r)$.
\end{pr}
\begin{proof}
The proof is almost exactly the same as the one in Theorem 4.1 in \cite{Lunardi2002385}. However
for the convenience of the reader we provide the details.

Let $ 0 < r \leq R $ and define a nonlinear map 
\begin{align*}
\Gamma : \left\{ 
                 w \in \overline{B(0,r)} \subset \mE_1(T): w(\cdot,0) = u_0
\right\}\longrightarrow \mE_1(T) \, ,
\end{align*}
by $\Gamma w =v $, where $v$ is the solution of 
\begin{equation*}
\left\{
 \begin{aligned}
    \partial_t v + A v &= F(w) \,, & &\text{ in } \o \times [0,T] \,, \\ 
    Bv                 &= G(w) \,, & &\text{ on } \partial \Omega \times         [0,T] \,, \\
    v|_{t=0}           &=u_0 \,,   & &\text{ in } \o \,.
 \end{aligned}
\right.
\end{equation*}
Proposition \ref{M-R}, gives the estimate 
$$
\norm{v}_{\mE_1(T)}\le \widetilde C \big(\abs{u_0}_1+\norm{F(w)}_{\mE_0(T)}+\norm{G(w)}_{\mF(T)}\big) \,,
$$
with $\widetilde C=\widetilde C(T)$ in which we could assume without loss of generality that $\widetilde C>1$. Hence by Proposition \ref{nonlinear estimate} we have 
$$
\norm{\Gamma (w)}_{\mE_1(T)} \leq \widetilde C \left(\abs{u_0}_1+2D(r)\norm{w}_{\mE_1(T)} \right).
$$ 
Consequently, if $r$ is so small that 
\begin{equation}\label{estimate for r}
2\widetilde C D(r) \leq \frac{1}{2}\,,
\end{equation}
and $u_0$ is so small that 
\begin{equation}\label{estimate for initial data}
\abs{u_0}_1 \leq \frac{r}{2\widetilde C}\,,
\end{equation}
 then $\Gamma$ maps the ball $\overline{B(0,r)}$ into itself. We then continue to show that $\Gamma$ is a $\frac{1}{2}$-contraction. To prove this let $w_1$, $w_2 \in B(0,r)$. Then 
$$
\norm{\Gamma w_1 -\Gamma w_2} _{\mE_1(T)} \leq \widetilde C\Big( \norm{F(w_1)-F(w_2)}_{\mE_0(T)}+
\norm{G(w_1)-G(w_2)}_{\mF(T)}\Big)\,, 
$$ 
and  using again  Proposition \ref{nonlinear estimate} we have
\begin{align*}
\norm{\Gamma w_1 -\Gamma w_2} _{\mE_1(T)} & \leq 2\widetilde C D(r)\norm{w_1-w_2}_{\mE_1(T)}\\
& \leq \frac{1}{2} \norm{w_1-w_2}_{\mE_1(T)}\,.
\end{align*}
Now the statement follows by the contraction mapping principle.
\end{proof}

Let us next observe from the inequalities \eqref{estimate for r} and \eqref{estimate for initial data} that  for a given time $T$, we can take $r$ as small as  we want provided $\rho < r $ is small enough. Now the strategy for proving the global existence  is as follows:  

We fix a time $T$ and choose $r \leq \text{min}\{R,\rho_0\}  $  small enough such that
\begin{equation}\label{constant}
2C_{0}C(r)\leq\frac{1}{2}
\end{equation}
and that  \eqref{estimate for r} holds. For such an $r$  we have a corresponding $\rho < r$ (by Proposition \ref{existence at large}). In conclusion  by Proposition \ref{existence at large}, problem \eqref{eq1} admits for $u_0\in B_{X_1}(0,\rho)$ a unique solution $
u\in \overline{B_{\mathbb{E}_1(T)}(0,r)}$.  The strategy is now as follows: We will find  some $\delta < \rho$ such that the solution $u(t)$ of  \eqref{eq1} with initial value $u_0 \in B_{X_1} ( 0 , \delta )$ defined on its maximal interval of existence always stays in the ball $B_{X_1} ( 0 , \rho)$. Then by Proposition \ref{existence at large}  we see immediately  that the maximal interval  of existence can not be  bounded  and this proves the global existence. 

Arguing as above there exists some $\delta'< \frac{\rho}{2}$ such that  the problem \eqref{eq1} admits for $u_0\in B_{X_1}(0, \delta')$ a unique solution 
\begin{equation}\label{delta'}
   u \in \overline{B_{\mathbb{E}_1(T)}(0,\frac{\rho}{2})}\,.
\end{equation}
Suppose that $u_0 \in B_{X_1}(0,\delta )$, where $\delta \leq \delta' < \rho$ is a number to be selected  later. Let  $[0 , t_*)$   be the  maximal interval of existence   of the solution $u(t)$ of \eqref{eq1} with initial value $u_0$. Furthermore let   $t_1$  be the existence time for the ball $B_{X_1}(0,\rho)$, i.e., 
$$
t_1:=\text{sup} \{t\in (0,t_*)\, : \abs{u(\tau)}_1\leq \rho, \quad \tau\in[0,t]\}.
 $$
Suppose also  $t_1<t_*$. Note that $t_1 \geq T$ by \eqref{delta'}.  
\begin{lem}\label{myself}
Under the conditions above we have $\norm{u}_{\mathbb{E}_1(t_1)}\leq r.$
\end{lem}
\begin{proof}
Since $\abs{u_{0}}_1<\delta' < \rho$,   we have $u\in B_{\mathbb{E}_1(T)}(0,r) $ by Proposition \ref{existence at large}. By the definition of $t_1$ and the fact that $T \leq t_1$ we get $\abs{u(T)}_1\leq \rho$. Therefore we can now start with the initial data $u(T)$ and  by finitely often repeating the same process we complete the proof (since  $T$ is constant, we will get  $u \in B_{\mathbb{E}_1(kT)}(0,r)$ for some $k$ such that $k T > t_1$ and therefore the estimate follows immediately).\end{proof}
 Now we apply \eqref{normal form}, Proposition \ref{S-M-R}, Lemma \ref{myself} and Proposition \ref{TRS} and derive
\begin{align*}
\norm{e^{\sigma t}w}_{\mE_1(t_{1})}&\leq  C_{0} \big(\abs{w_0}_1+\norm{e^{\sigma t}R(v,w)}_{\mE_0(a)}+\norm{e^{\sigma t}S(v,w)}_{\mF(a)}\big)\\
&\leq C_0 \abs{w_0}_1+2C_{0}C(r)\norm{e^{\sigma t}w}_{\mE_1(t_1)} \,. 
\end{align*}
Together with \eqref{constant} this implies\begin{equation}\label{w}
\norm{e^{\sigma t}w}_{\mE_1(t_{1})}\leq 2C_0\abs{w_0}_1,\quad \sigma\in[0,\omega).
\end{equation}
Hence for $t\in [0,t_1]$ 
$$
\abs{e^{\sigma t}w(t)}_1\leq \norm{e^{\sigma t}w}_{\mE_1(t_1)}\leq 2C_0 \abs{w_0}_1 \,,
$$
and so \begin{align}\label{exponential}
\abs{w(t)}_1\leq 2C_0e^{-\sigma t} \abs{w_0}_1,\quad t\in [0,t_1],\quad \sigma\in[0,\omega).
\end{align}
Using the equation for $v$ in \eqref{normal form} and Lemma \ref{T} we obtain
\begin{align*}
\abs{v(t)}&\leq \abs{v_0}+\int_0^t \!\abs{T(v(s),w(s)}\, \mathrm{d}s\\
&\leq \abs{v_0}+C_1\int_0^t\abs{w(s)}_1\, \mathrm d s\\
&\leq \abs{v_0}+C_1\int_0^\infty \ e^{-\sigma s }\, \mathrm d s \: \norm{e^{\sigma t }w}_{\mE_1(t_1)}\\
&\leq \abs{v_0}+\frac{C_1}{\sigma} \norm{e^{\sigma t }w}_{\mE_1(t_1)}\\
&\leq \abs{v_0}+ C_{4}\abs{w_0}_1,\quad t\in [0,t_1],
\end{align*}
where $C_4=2C_0\frac{C_1}{\sigma}$. Combining the last two estimates and taking into account  \eqref{phi} we find  
$$
\abs{u(t)}_1\leq C_5 \abs{u_0}_1,\quad t\in[0,t_1].
$$
for some constant $C_5\geq 1$.
In particular  this inequality  is satisfied for $t=t_1.$  Therefore   choosing $\delta\leq\frac{\delta'}{2C_5}$, we find $\abs{u(t_1)}_1\leq \delta'/2$. But this is a contradiction to the definition of $t_1$ since by \eqref{delta'} we could start with  $t_1 $ and continue further and still being in the ball $B_{X_1}(0,\rho)$ hence $t_1=t_*$. By Lemma \ref{myself}   we get then uniform bounds $\norm{u}_{\mathbb{E}_1(a)}\leq r,$ for all $a<t_*$. As a result of Proposition \ref{existence at large}, we conclude $t_*=\infty.$ 
\\[0.0 cm]

\textbf{(g)} Finally, we repeat the estimates above on the interval $[0,\infty)$. This yields
\begin{align*}
\abs{v(t)}\leq \abs{v_0}+ C_{4}\abs{w_0}_1,\quad  \abs{w(t)}_1\leq 2C_0e^{-\sigma t} \abs{w_0}_1,\quad t\in [0,\infty),
\end{align*}
for $u_0\in B_{X_1}(0,\delta).$ Furthermore, the limit 
$$\text{lim}_{t\rightarrow\infty}v(t)=
\text{lim}_{t\rightarrow\infty} \left( v_0+\int_0^\infty \!T(v(s),w(s))\,\mathrm{d}s\right)\,=:v_\infty $$
exists in $X$ since the integral converges absolutely. Hence  
$$
u_\infty:= \text{lim}_{t\rightarrow\infty}u(t)=\text{lim}_{t\rightarrow\infty} v(t)+\phi (v(t))+w(t)=v_\infty+\phi (v_\infty).
$$
exists too and $u_\infty $ is a stationary solution of \eqref{eq1} due to \eqref{equilibria}. Moreover, Lemma \ref{T} and \eqref{w} imply
 
\begin{align*}
\abs{v(t)-v_\infty}&=\left|\int_t^\infty T(v(t),w(t))\,\mathrm d s\right|\\
&\leq C_1 \int_t^\infty \abs{w(s)}_1\, \mathrm d s\\
&\leq  C_1 \int_t^\infty e^{-\sigma s }\, \mathrm d s\: \norm{e^{\sigma t  }w}_{\mE_1(\infty)}\\
&\leq C_4 e^{-\sigma t}\abs{w_0}_1 \quad t\geq 0.
\end{align*}
Hence $    v(t) \to  v_\infty$ in $X$ at an exponential rate as $t \to \infty$. Finally, using  \eqref{phi} and \eqref{exponential} we conclude
\begin{align*}
\abs{u(t)-u_\infty}_1&=\abs{v(t)+\phi(v(t))+w(t)-u_\infty}_1\\
&\leq \abs{v(t)-v_\infty}+\abs{\phi(v(t))-\phi(v_\infty)}_1+\abs{w(t)}_1\\
&\leq (2C_4+2C_0)e^{-\sigma t } \abs{w_0}_1\\
&\leq Ce^{-\sigma t } \abs{P^s u_0-\phi(P^c u_0)}_1,
\end{align*}
which proves the   second part of Theorem \ref{theo01}. Note that by Lemma \ref{myself} it follows that by choosing $ 0<\delta \leq \rho$ sufficiently small, the solution  starting in $B_{X_1}(u_*,\delta)$  exists for all times and stays within $B_{X_1}(u_*,r)$. This implies stability of $u_*$.
\end{proof}
\begin{rem}
Note that the assumption $u_\ast \equiv 0$ in Theorem \ref{theo01} is not a restriction. The case of a general stationary solution $\bar{u}$ can be reduced to the case of the zero stationary solution by considering
$$
U(t)=u(t)-\bar{u}.
$$
\end{rem}

\section{Stability of lens-shaped networks under  \\surface
diffusion flow}\label{application}
In this section we show that the   lens-shaped networks generated by circular arcs are stable under the surface diffusion flow. We will see in this section how well the    generalized principle of   linearized stability in the parabolic Hölder spaces, i.e., Theorem \ref{Main Result}  can be used as a tool to show the stability.  Indeed, the set of equilibria forms a finite-dimensional smooth manifold and the resulting PDE has nonlocal terms in the highest order derivatives. 
\subsection{The geometric setting}
The surface diffusion flow is a geometric evolution equation for an evolving hypersurface  $\Gamma=\{\Gamma(t)\}_{t>0}$ in which
\begin{equation}\label{surface diffusion}
V=-\Delta_{\Gamma(t)} \kappa\,,
\end{equation}
where $V$ is the normal velocity, $\kappa$ is  the sum of the principle curvatures, and $\Delta_{\Gamma(t)}$ is the Laplace-Beltrami operator of the hypersurface $\Gamma(t)$. Our sign convention is that $\kappa$ is negative for spheres for which we choose the outer unit normal. 

Constant-mean-curvature surfaces  are stationary solutions of \eqref{surface diffusion}. Now, it is natural to ask whether  these solutions  are stable under the flow.  Indeed, Elliott and Garcke \cite{Elliott-Garcke} showed the stability of circles in the plane and   one year later Escher, Mayer and Simonett \cite{Escher-Mayer-Simonett} proved the stability of spheres in higher dimensions. In general, the surfaces  will meet an outer boundary or they might intersect at triple or multiple lines.

 A lens-shaped network consists of two smooth curves and two rays arranged as in Figure \ref{fig:lens-shaped}, that is to say, we assume that the network  has reflection symmetry across the $x^1$-axis and that the two curves meet the two rays  with a constant angle $\pi-\theta$, where $0<\theta<\pi$. Note that $\theta=\frac{\pi}{3} $ corresponds to  symmetric angles at the triple junction. 

\begin{figure}[htbp]
        \centering
        \begin{tikzpicture}[scale=0.7,>=stealth]
\draw (-4,0)-- (-7,0);   
\draw (4.5,0)-- (7.5,0);  
\draw (5,0) arc (50:94:1)node[above=7pt,right=2pt] {$\pi-\theta$};       \draw (-4.46,0) arc (-50:-93:-1)node[above=7pt,right=-25pt] {$\pi-\theta$};
   
\draw (-4,0) .. controls (-3,1) and (-2,2) .. (0,1.2);
\draw (0,1.2) .. controls (1,0.7) and (2,1.2) .. (2,1.2);
\draw (2,1.2) .. controls (3,2) and (4,0.5) .. (4.5,0);
\draw (-4,0) .. controls (-3,-1) and (-2,-2) .. (0,-1.2);
\draw (0,-1.2) .. controls (1,-0.7) and (2,-1.2) .. (2,-1.2);
\draw (2,-1.2) .. controls (3,-2) and (4,-0.5) .. (4.5,0);
        \end{tikzpicture}
        \caption{ Lens-shaped network}
        \label{fig:lens-shaped}
             \end{figure}
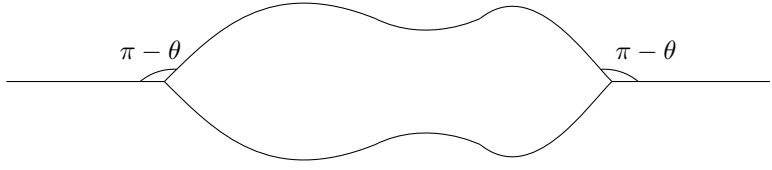

More precisely, a  lens-shaped network is determined by a curve $\Gamma$ with the following property:
\begin{align*}
\begin{cases}
\partial \Gamma \subset \{(x,y)\in \mathbb R^2: y=0\}\,, \\
\sphericalangle(N,e_2)|_{\partial \Gamma}=\theta \,,
\end{cases}
\end{align*}
where $N$ is the unit normal to $\Gamma$ pointing outwards of the bubble, see e.g. Figure \ref{fig:spherical arc}. 

Then the entire   lens-shaped network is defined by four curves:
$\Gamma_1,\Gamma_2,\Gamma_3,\Gamma_4$, where $\Gamma_1$ is the curve $\Gamma$
describe  above, $\Gamma_2$ is the reflection of $\Gamma_1$ across the $x^1$ -axis and   $\Gamma_3$, $\Gamma_4$ are the rays contained in the $x^1$-axis
meeting $\Gamma_1$ and $\Gamma_2$ at triple junctions.

We study the following problem introduced by Garcke and Novick-Cohen \cite{GarckeNovick}: Find evolving  lens-shaped networks $\Gamma_1(t),\dots, \Gamma_4(t)$  as described above with the following properties:
\begin{equation}\label{surface diffusion for lens}
\left \{
 \begin{aligned}
    V_i                                               &= - \Delta _{\Gamma_i}       \kappa_i                               & &\text{ on } \Gamma_i(t),           & &t>0, & &(i=1,2,3,4),\\
    \nabla_{\Gamma_1} \kappa_1 \cdot n_{\partial \Gamma_1} &=\nabla_{\Gamma_2}       \kappa_2 \cdot n_{\partial \Gamma_2} \\
                                                      &=\nabla_{\Gamma_i}       \kappa_i \cdot n_{\partial \Gamma_i}  & &\text{ on } \partial \Gamma_i(t),       & &t>0, & &(i=3,4),\\
    \Gamma_i(t)|_{t=0}                                &=\Gamma^0_i & &&&&&(i=1,2,3,4),
 \end{aligned}
\right.
\end{equation}
where $\Gamma_i^0$ $(i=1,2,3,4)$ form  a  given initial  lens-shaped network fulfilling the balance of flux condition, i.e., the second condition in \eqref{surface diffusion for lens}. Here $V_i$ and $\kappa_i$ are the normal velocity and mean curvature of $\Gamma_i(t)$, respectively,    $n_{\partial \Gamma_i}$ is the outer unit conormal of $\Gamma_i$ at boundary points and $\nabla_{\Gamma_i}$ denotes  the surface gradient of the curve $\Gamma_i(t)$.
 
We choose the unit normal  $N_2(\cdot,t)$ of $\Gamma_2(t)$  to be  pointed inwards of the bubble. Then with this choice of normals we observe that $\kappa_2=-\kappa_1$ at the boundary points and therefore    we get 
$$
\kappa_1+\kappa_2+\kappa_i=0\quad \text{ on } \partial \Gamma_i(t) \quad \text{ for } i=3,4\,,
$$
which must hold  at the triple junctions for more general triple junctions (non-symmetric, non-flat) with $120$ degree angles. We refer to Garcke, Novick-Cohen
\cite{GarckeNovick} for the precise setting of the general problem. 

 Let us note that solutions to \eqref{surface diffusion for lens} preserve the enclosed area. Indeed, by Lemma 4.22 in \cite{Depner}, we have 
\begin{align*}
\frac{d}{dt}\int_{\Omega(t)}1\,\mathrm dx&= -\int_{\Gamma_2(t)}V_2 \,\mathrm d s+\int_{\Gamma_1(t)}V_1 \, \mathrm d s \\
&=\int_{\Gamma_2(t)}\Delta_{\Gamma_2} \kappa_2 \mathrm d s-\int_{\Gamma_1(t)}\Delta_{\Gamma_1} \kappa_1 \mathrm d s \\
&= \int_{\partial \Gamma_2(t)} \nabla_{\Gamma_2} \kappa_2 \cdot n_{\partial \Gamma_2}\, \mathrm d s  -\int_{\partial \Gamma_1(t)} \nabla_{\Gamma_1} \kappa_1 \cdot n_{\partial \Gamma_1}\, \mathrm d s\\
&=0\,,
\end{align*}
where $\Omega(t)$  is defined as the region bounded by $\Gamma_1(t)$ and $\Gamma_2(t)$.

Using the fact that  the curvature of $\Gamma_3(t)$ and $\Gamma_4(t)$ are zero, it is easy to verify that the family of  lens-shaped networks  $\Gamma_1(t),\dots, \Gamma_4(t)$ evolves according to \eqref{surface diffusion for lens} if $\Gamma(t):=\Gamma_1(t)$ satisfies 

\begin{equation}\label{cap problem}
\left \{
 \begin{aligned}
    & V =-\Delta_\Gamma \kappa                                      & &\text{         on } \Gamma(t)\,,           & &t>0 \,,\\
    & \partial \Gamma(t) \subset \{(x,y)\in \mathbb R^2: y=0\} & &                                   & &t>0 \,,\\
    & N\cdot e_2=\cos \theta\                                  & &\text {         on }{\partial \Gamma(t)}\,, & &t>0 \,,\\
    &\nabla_{\Gamma} \kappa \cdot n_{\partial \Gamma}=0             & &\text{         on } \partial\Gamma(t)\,,   & &t>0 \,,\\
    &\Gamma(t)|_{t=0}=\Gamma_0\,,
 \end{aligned}
\right.
\end{equation}
where   $\Gamma_0$ is a given initial curve which fulfills the contact, angle and  no-flux condition as above.
\begin{rem}
The equation $V=-\Delta_{\Gamma}\kappa$ written in a local parameterization is a fourth-order parabolic equation and above we prescribe three boundary conditions. This is due to the fact that \eqref{cap problem} is a free boundary problem because the points in $\partial \Gamma$ can move in the set $\{y=0\}$, see \cite{BaconneauLunardi} for a related second-order problem. Moreover, we would like to refer to the work of Schnürer and co-authors \cite{SchnurerCoauthors}, where they consider  the evolution of symmetric convex lens-shaped networks under the curve shortening flow. 
\end{rem}  
Let us look at  equilibria of the problem \eqref{cap problem}.  It is easy to verify that the curvature of the stationary solutions  is constant and so the set of  the stationary solutions of \eqref{cap problem} consists precisely of all  circular arcs  that intersect the $x$-axis with $\pi-\theta$ degree angles  denoted by $CA_r(a_1,-r\cos \theta)$, where $\abs r$  is the radius and $(a_1, -r\cos \theta)$ are the coordinates of the center with $a_1 \in \mathbb R$, $ r\in \mathbb{R}\setminus \{ 0 \}$ (see Figure \ref{fig:spherical arc} for the justification of the coordinates of the center).
Therefore the set of equilibria forms a 2-parameter family, the parameters are the  radius of the circular arc  and the first component of  the center.

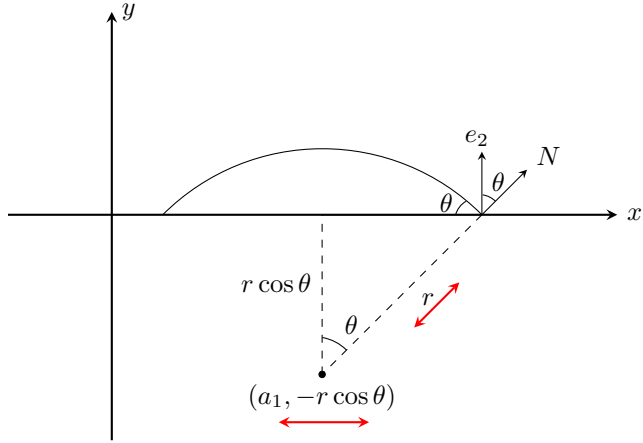
\begin{figure}[htbp]
        \centering
        \begin{tikzpicture}[scale=0.6,>=stealth]
                \draw (0,0) arc (45:135:5);
               \draw [->,thick](-10.5,0)--(3,0)node[right]{$x$};
               \draw [->,thick](-8.2,-5)--(-8.2,4.5)node[right]{$y$};
               \filldraw [black] (-3.54,-3.54) circle (2pt)node[below]{$(a_1,-r\cos \theta)$};
               \draw[dashed] (0,0) -- (-3.54,-3.54)node[below=-1cm,right=1.2cm]{$r$};
               \draw [->](0,0) -- (1,1)node[below=-5pt,right=0cm]{$N$};
               \draw [->](0,0) -- (0,1.4)node[below=-5pt,right=-10pt]{$e_2$};
                \draw (0.3,0.3) arc(45:82:0.5)node[below=-5pt,right=0pt]{$\theta$};
               \draw (-0.35,0.3) arc(120:165:0.5)node[below=-4.7pt,right=-9.5pt]{$\theta$};
               \draw [dashed] (-3.54,-3.54) -- (-3.54,0)node[below=0.9cm, right=-1.2cm]{$r\cos \theta$};
              \draw (-3,-3) arc(45:80:1)node[below=-5pt,right=5pt]{$\theta$}; 
               \draw (-3.54,0)--(0,0);
               \draw [<->,red,thick](-4.5,-4.6) --(-2.5,-4.6);
               \draw [<->,red,thick](-1.5,-2.5)--(-0.5,-1.5);
                      \end{tikzpicture}
                       \caption{Circular arcs $CA_r(a_1,-r\cos \theta)$ for $r>0$}
               \label{fig:spherical arc}
\end{figure}

It is a goal of this section to prove the stability of such stationary solutions (see Theorem \ref{main theorem example}) using the generalized principle of linearized stability in parabolic Hölder spaces, i.e., Theorem \ref{theo01}.

Let us briefly outline how we proceed. At first we parameterize the curves around a stationary curve with the help of a modified distance function introduced in Depner and Garcke \cite{DepnerGarckelinearized}. Note that the linearization in the case of a triple junction with boundary contact is calculated  in \cite{DepnerGarckelinearized} and  the calculations can be easily modified
to the present situation.   We then formulate the evolution problem with the help of this parameterization and derive a highly nonlinear, nonlocal problem \eqref{nonlinear, nonlocal}.  

In Section \ref{linearization and abstract setting}, after deriving  the linearization around the stationary solution, we see   how our nonlinear, nonlocal problem    fits well into our general  evolution  system \eqref{eq1}.  We then continue by checking  the assumption (H1), (H2), (LS),  (SP) and the normality condition \eqref{normality condition}.

Finally, in order to apply Theorem \ref{theo01}, it remains to check the assumption that the stationary solution is normally stable which is done in Section \ref{check normally stability}.

\subsection{Parameterization and PDE formulation}\label{PDE and parametrization}
\subsubsection{Parameterization}
In this section we introduce the mathematical setting in order to reformulate our geometric evolution law, i.e., \eqref{cap problem} as a partial differential equation for an unknown function defined on a fixed domain. To this end, we use a parameterization with two parameters corresponding to a movement in tangential and normal direction, introduced in Depner and Garcke \cite{DepnerGarckelinearized}, see also \cite{DepnerGarckeKohsaka}.

Let us describe $\Gamma(t)$ with the help of a function $\rho: \Gamma_*  \times [0,T)\rightarrow \mathbb R$  as graphs over some fixed stationary solution  $\Gamma_*$. Note that the curvature $\kappa_*$ of $\Gamma_*$ is  constant and negative and  the length of $\Gamma_*$ is $2l_*$, where
 $$-\kappa_* l_*=\theta\,.$$ 
 
Let $x$ be the arc-length parameter of $\Gamma_*$. Then  an arc-length parameterization of $\Gamma_*$ is defined as 
$$
\Gamma_*=\left\{\Phi_*(x): x \in [-l_*,l_*]\right\}.
$$ 
For $\sigma\in \Gamma_*$, we set $\Phi_*^{-1}(\sigma)=x(\sigma)\in \mathbb R$. From now on, for simplicity, we set 
\begin{equation} \label{x and sigma}
\partial _\sigma w ( \sigma ):=\partial_x(w\circ \Phi_*) (x), \quad \sigma = \Phi_* (x),
\end{equation}
 i.e., we omit the parameterization. In particular, we use the  slight abuse of the notation  
\begin{equation} \label{abuse of notation}
w(\sigma)=w(x)\quad (\sigma \in \Gamma_*)\,.
\end{equation}
In order to parameterize a curve close to $\Gamma_*$, we define 
\begin{align}
\Psi:\Gamma_*\times (-\epsilon,\epsilon)\times (-\delta,\delta)&\longrightarrow \mathbb R^2\,,\\
(\sigma,w,r)&\mapsto \Psi(\sigma,w,r):=\sigma+w N_*(\sigma )+r \tau_*(\sigma)\,,\nonumber
\end{align}
where $\tau_*$ is a tangential vector field on $\Gamma_*$ with support in a neighborhood of $\partial \Gamma_*$, which equals the outer unit  conormal $n_{\partial \Gamma_*}$ at $\partial \Gamma_*$.

We define $\Phi=\Phi_{\rho,\mu}$ (we often omit the subscript $(\rho,\mu)$ for shortness) by  \begin{equation} \label{par2}
\Phi:\Gamma_* \times [0,T) \rightarrow \mathbb R^2 \,, \quad \Phi(\sigma,t):=
\Psi(\sigma, \rho(\sigma,t),\mu(\mathrm{pr}(\sigma),t))\,,
\end{equation}
where 
\begin{equation} \label{par1}
\rho:\Gamma_* \times [0,T) \rightarrow (-\epsilon, \epsilon)\,, \quad \mu:\partial \Gamma_*=\{a_*,b_*\}\times[0,T)\rightarrow (-\delta,\delta)\,.
\end{equation}
The projection $\mathrm{pr}:\Gamma_* \rightarrow \partial \Gamma_*=\{a_*,b_*\}$ is defined by imposing the following condition: The point $\mathrm{pr}(x ) \in \partial \Gamma_*$ has the shortest distance on $\Gamma_*$ to $\sigma $. Of course, in a small neighborhood of $\partial \Gamma_*$, the projection $\mathrm{pr}$ is well-defined and smooth. And this is enough for our purpose since we need this projection  just near $\partial \Gamma_*$ because it is used in the product $\mu(\mathrm{pr}(\sigma ),t)\tau_*(\sigma )$, where the second term vanishes outside a (small) neighborhood of $\partial \Gamma_*$. Finally, by setting for small $\epsilon, \delta>0$ and fixed $t$ 
$$
(\Phi)_t: \Gamma_* \rightarrow \mathbb R^2, \quad (\Phi)_t(\sigma ):=\Phi(\sigma ,t) \quad \forall \sigma \in \Gamma_* \,,
$$
 we define a new curve through 
\begin{equation}\label{Gamma rho}
\Gamma_{\rho,\mu}(t):=\mathrm{image}((\Phi)_t)\,.
\end{equation}
Note that  for $\rho \equiv 0 $ and $\mu \equiv 0$ the resulting curve  coincides with a  stationary curve $\Gamma_*$.

As  Figure \ref{Fi:parameter} nicely illustrates, apart from the normal movement, close to  the boundary points the parameter $\mu$ allows for  tangential movement. Therefore the resulting curve not only have the possibility to meet the $x$-axis at its boundary points  but also have the opportunity to be parameterized as a graph over the fixed  stationary curve $\Gamma_*$. The price to pay is the appearance of nonlocal terms  explained explicitly below.    
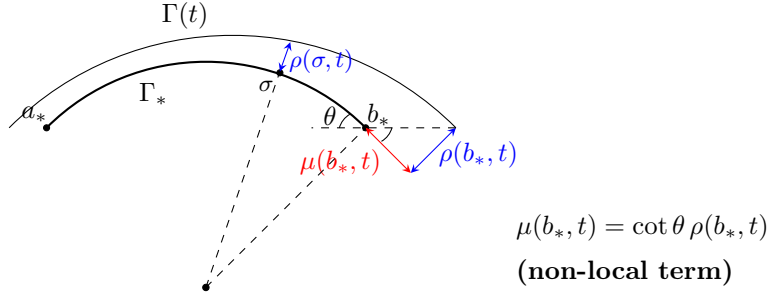
\begin{figure}[htbp]
        \centering
        \begin{tikzpicture}[scale=0.6,>=stealth]
               \draw[thick] (0,0) arc (45:135:5);
               \draw (2,0) arc (45:135:7);                
               \filldraw [black] (-3.54,-3.54) circle (2pt)node [below =-0.8cm , right = 4cm] {$\mu( b_*, t ) = \cot \theta  \, \rho(b_* , t)$} node [below =-0.2cm , right = 4cm]{\textbf{(non-local term)}} ;
               \filldraw [black] (-1.9,1.22) circle (1.9 pt) node[above =0.15 cm , right = 0cm , blue]{\small{$\rho (\sigma , t)$}} ;
               \draw[dashed] (-3.54,-3.54) -- (-1.9,1.25);
               \draw [blue, <->](-1.9,1.25) -- (-1.67,1.9);
               \filldraw [black] (0,0) circle (1.9pt) node[above=5, right = -3]{$b_*$};
                \filldraw [black] (-7.07,0) circle (1.9pt) node[above=5, right = -13]{$a_*$}; 
               \draw [red , <->](0,0) -- (1,-1)node[above=4pt,right=-1.6cm]{$\mu ( b_*, t )$};
               \draw [blue, <->](1,-1) -- (2,0)node[below=10pt,right=-0.35cm]{$\rho ( b_*, t)$};
               \draw[dashed] (-3.54,-3.54) -- (0,0);
               \draw (-0.35,0.3) arc(120:165:0.5)node[below=-4.7pt,right=-9.5pt]{$\theta$};
               \draw (0.35,-0.3) arc(120:165:-0.5)node[below=-4.7pt,right=-9.5pt]{$$};
                  
               \draw[dashed] (2,0)--(-1.2,0);
               \coordinate[label=above:$ \Gamma_*$] (A) at (-4.7,0.3);    
               \coordinate[label=above:$ \Gamma(t) $] (A) at (-4,2);  
               \coordinate[label=above: \small{$ \sigma   $}] (A) at (-2.2,0.6);         \end{tikzpicture}
        \caption{parameterizing of an evolving curve  over a fixed stationary curve}
        \label{Fi:parameter}
\end{figure}

 Let us   formulate the condition, that the curve $\Gamma(t)$ meets the $x$-axis at its boundary by 
\begin{equation} \label{contact and angle condition}
\langle \Phi(\sigma,t) , e_2 \rangle=0 \quad \text{ for } \sigma \in \partial \Gamma_*\,, \quad t\ge 0\,.
\end{equation}
Here and hereafter, $\langle \cdot , \cdot \rangle$ means the inner product in $\mR^2$. The following lemma shows that this condition leads to a linear dependency between $\mu$ and $\rho$ at the boundary points and as a result, nonlocal terms  will enter into  formulations. 

\begin{lem}\label{mu1}
Equivalent to the equation \eqref{contact and angle condition} is the following condition 
\begin{equation} \label{mu}
\mu = ( \cot \theta ) \rho  \quad \text{ on } \partial \Gamma_*\,.
\end{equation}
\end{lem}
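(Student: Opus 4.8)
The plan is to compute the scalar product $\langle \Phi(\sigma,t), e_2\rangle$ at a boundary point $\sigma \in \partial\Gamma_*$ and read off what condition \eqref{contact and angle condition} imposes. Fix $\sigma = b_*$ (the point $a_*$ is treated identically, and by the reflection symmetry it is enough to argue at one end). By definition of $\Phi$ and $\Psi$ in \eqref{par2} we have, at the boundary, $\mathrm{pr}(\sigma)=\sigma$ and $\tau_*(\sigma)=n_{\partial\Gamma_*}(\sigma)$, so
\[
\Phi(\sigma,t)=\sigma + \rho(\sigma,t)\,N_*(\sigma) + \mu(\sigma,t)\,n_{\partial\Gamma_*}(\sigma).
\]
First I would use that $\Gamma_*$ is itself a stationary solution, hence a circular arc meeting the $x$-axis at angle $\pi-\theta$; in particular $\sigma$ lies on the $x$-axis, so $\langle \sigma, e_2\rangle = 0$, and the equation \eqref{contact and angle condition} reduces to the \emph{linear} relation
\[
\rho(\sigma,t)\,\langle N_*(\sigma), e_2\rangle + \mu(\sigma,t)\,\langle n_{\partial\Gamma_*}(\sigma), e_2\rangle = 0 .
\]

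The next step is to evaluate the two inner products from the geometry of $\Gamma_*$ at $\sigma$. By the angle condition defining the stationary configuration, $\sphericalangle(N_*, e_2)\big|_{\partial\Gamma_*} = \theta$, so $\langle N_*(\sigma), e_2\rangle = \cos\theta$. For the conormal, $n_{\partial\Gamma_*}(\sigma)$ is the unit tangent to $\Gamma_*$ at $\sigma$ pointing \emph{outward} along the curve, which is perpendicular to $N_*(\sigma)$; hence the angle between $n_{\partial\Gamma_*}(\sigma)$ and $e_2$ is $\theta \pm \tfrac{\pi}{2}$, giving $\langle n_{\partial\Gamma_*}(\sigma), e_2\rangle = \mp\sin\theta$. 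One has to pin down the sign using the chosen orientations (outer normal $N_*$ pointing out of the bubble, $n_{\partial\Gamma_*}$ the outer conormal); a quick look at Figure \ref{fig:spherical arc} / Figure \ref{Fi:parameter} fixes it as $\langle n_{\partial\Gamma_*}(\sigma), e_2\rangle = -\sin\theta$. Substituting,
\[
\rho(\sigma,t)\cos\theta - \mu(\sigma,t)\sin\theta = 0 ,
\]
and since $0<\theta<\pi$ forces $\sin\theta \neq 0$, this is equivalent to $\mu(\sigma,t) = (\cot\theta)\,\rho(\sigma,t)$, which is \eqref{mu}. Because the manipulations are reversible, \eqref{mu} conversely implies \eqref{contact and angle condition}, giving the claimed equivalence.

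The only genuinely delicate point — the "main obstacle", such as it is — is bookkeeping the orientations and signs of $N_*$ and $n_{\partial\Gamma_*}$ consistently with the sign conventions already fixed in the paper (outer unit normal, $\kappa_*$ negative, $-\kappa_* l_* = \theta$), so that one lands on $+\cot\theta$ rather than $-\cot\theta$; I would verify this against the figure and against the special case $\theta=\tfrac{\pi}{2}$ (then $\mu\equiv 0$ at the boundary, i.e.\ no tangential correction is needed, which is the expected degenerate situation). Everything else is a one-line computation. I would also remark that the relation holds at both boundary points $a_*, b_*$ with the same constant, which is what "on $\partial\Gamma_*$" in \eqref{mu} means, and this is where the nonlocal coupling between $\mu$ and $\rho$ originates.
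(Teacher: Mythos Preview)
Your proof is correct and follows essentially the same route as the paper: expand $\langle \Phi(\sigma,t), e_2\rangle$ at a boundary point using $\langle \sigma, e_2\rangle = 0$, then evaluate $\langle N_*, e_2\rangle = \cos\theta$ and $\langle n_{\partial\Gamma_*}, e_2\rangle = \cos(\tfrac{\pi}{2}+\theta) = -\sin\theta$ to obtain the linear relation. Your version is in fact more detailed than the paper's, which compresses the computation into a single displayed line; your sanity check at $\theta=\tfrac{\pi}{2}$ and explicit discussion of the sign bookkeeping are nice additions.
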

\begin{proof}
Using the definition of $\Phi$, the fact that $\langle \sigma , e_2 \rangle = 0 $ on $\partial \Gamma_*$ and the  angle condition on  $\partial \Gamma_*$, we easily get 
\begin{equation*}
\mu=-\frac{ \langle N_* , e_2 \rangle}{ \langle n_{\partial \Gamma_*}, e_2 \rangle}\rho=-\Big(\frac{\cos \theta}{\cos( \frac{\pi}{2}+\theta)}\Big)\rho=(\cot \theta)\rho \quad \text{ on } \partial \Gamma_*\,
\end{equation*}
and vice versa.       
\end{proof}

We assume that the initial curve $\Gamma_0$ from \eqref{cap problem} is also given as  a 
 graph over $\Gamma_*$, i.e.,
$$
\Gamma_0 = \{ \Psi(\sigma, \rho_0(\sigma), \mu_0(\mathrm{pr}(\sigma))): \sigma \in \Gamma_*\}\,.
$$
Furthermore, In order to apply our main result we make the assumption that $\rho_0 \in C^{4+\a}(\Gamma_*)$ with $\|\rho_0\|_{C^{4+\a}}\leq \epsilon $ for some small $\epsilon>0.$  Note that since $\Gamma_0$ is assumed to satisfy the contact condition,  $\mu_0=(\cot \theta) \rho_0$ at $\partial \Gamma_*$.
\subsubsection{The    nonlocal, nonlinear parabolic boundary value problem} First we derive evolution equation for $\rho$ and $\mu$ which has to hold in the case that $\Gamma$ in \eqref{Gamma rho} solves  \eqref{cap problem}. Note that the following calculations are adapted from \cite{GarckeItoKohsakatriplejunction}. The normal velocity $V$ of $\Gamma (t)$ is given as 
\begin{equation*}
\begin{aligned}
V (\sigma,t) &= \langle  \Phi_t (\sigma, t),  N(\sigma, t) \rangle\\&= \frac{1}{J( \sigma, \rho ( \sigma,t) , \mu ( \mathrm{pr}(\sigma),t)}\langle \Psi_w , R \Psi_\sigma \rangle \rho_t( \sigma, t) + \langle \Psi_r , N(\sigma , t) \rangle \mu_t ( \mathrm{pr} (\sigma) , t ) \\
\end{aligned}
\end{equation*}
where 
\begin{align} \label{A normal}
N( \sigma ,t ) &= \frac{1}{J( \sigma, \rho ( \sigma,t) , \mu ( \mathrm{pr}(\sigma),t)} R \Phi_\sigma(\sigma ,t) \nonumber \\ 
               &= \frac{1}{J( \sigma, \rho ( \sigma,t) , \mu ( \mathrm{pr}(\sigma),t)} \big(R \Psi_\sigma + R \Psi_w \rho_ \sigma (\sigma ,t)\big) \,.
\end{align}

Here
\begin{equation} \label{J}
J=J(\sigma, \rho , \mu ) := \abs{ \Phi_ \sigma  } = \sqrt{\abs{\Psi_ \sigma}^2 + 2 \langle \Psi_ \sigma , \Psi_ w\rangle \rho_ \sigma  + \abs{\Psi_ w}^2 \abs{\rho_\sigma }^2} \,,
\end{equation}
and $R$ denotes the anti-clockwise rotation by $\pi/2$ (remember our convention  \eqref{x and sigma}).
In addition, the curvature $\kappa( = \kappa (\sigma, \rho, \mu))$ of $\Gamma(t)$ is computed as 
\begin{align} \label{A curvature}
     \kappa &= \frac{1}{(J(\sigma, \rho, \mu))^3} \langle \Phi_{\sigma \sigma} , R \Phi_{\sigma} \rangle \\
       &= \frac{1}{(J(\sigma,\rho, \mu))^3} \Big [ \langle \Psi _w, R \Psi_\sigma \rangle \rho_{\sigma \sigma} + \big\{2\langle \Psi_{\sigma w , }R \Psi_\sigma \rangle +  \langle \Psi_{\sigma \sigma}, R \Psi_w \rangle \big \}\rho_\sigma  \notag \\
       &\hspace{7pt}+ \{ \langle \Psi_{ww} , R \Psi_\sigma \rangle + 2 \langle \Psi_{\sigma w}, R \Psi_w \rangle + \langle \Psi_{ww}, R \Psi_w \rangle \rho_\sigma \} ( \rho_\sigma)^2 + \langle \Psi_{\sigma \sigma}, R \Psi_\sigma \rangle \Big ].\notag
 \end{align}

Thus the surface diffusion equation  can be formulated as 

\begin{equation}\label{equation for rho}
\rho_t = a(\sigma, \rho , \mu ) \Delta (\sigma, \rho , \mu ) \kappa (\sigma,\rho , \mu ) + b(\sigma, \rho , \mu ) \mu_t \,,
\end{equation}
where  
\begin{align*}
&a(\sigma, \rho , \mu ) := \frac{J(\sigma,\rho, \mu)}{\langle \Psi_w , R \Psi_\sigma \rangle}\,, \quad b(\sigma, \rho , \mu ) := - \frac{\langle \Psi_ r , R \Psi_\sigma\rangle + \langle \Psi_ r, R \Psi_w \rangle \rho_\sigma  }{\langle \Psi_w, R \Psi_\sigma \rangle}\,, \\
&\Delta(\sigma, \rho ,\mu) v:=  \frac{1}{J(\sigma, \rho, \mu)} \partial_\sigma \Big (  \frac{1}{J(\sigma, \rho, \mu)} \partial_ \sigma v \Big).  
\end{align*}
Note that we omitted the mapping $\mathrm{pr}$ in the function $\mu$  as well as the term   $ ( \sigma, \rho ( \sigma,t) , \mu ( \mathrm{pr}(\sigma),t)) $ in $ \Psi_u  $ with $u \in \{ \sigma, w, \mu  \}$ for reasons of shortness.

Now we will  write \eqref{equation for rho} as an evolution equation, which is nonlocal in space, just for the mapping $\rho$, using the linear dependence \eqref{mu} on $\partial \Gamma_*$. To do this, with the help of \eqref{mu}, we rewrite \eqref{equation for rho} into
\begin{equation}\label{suitable form of rho}
\partial_t \rho= \mathfrak{F}( \rho ,  \rho \circ \mathrm{pr}) +  \mathfrak{b}( \rho,   \rho \circ \mathrm{pr})\partial_t\left( (\cot \theta) \rho \circ \mathrm{pr}\right)  \quad \text{ in } \Gamma_*\,,
\end{equation}
where for $\sigma \in \Gamma_* \,$
\begin{align*}
\mathfrak{F}( \rho ,   \rho \circ \mathrm{pr}  ) (\sigma)&= a(\sigma, \rho , (\cot \theta)  \rho \circ \mathrm{pr})\Delta (\sigma, \rho , (\cot \theta)  \rho \circ \mathrm{pr} )\kappa (\sigma, \rho , (\cot \theta)  \rho \circ \mathrm{pr})\,,\\
\mathfrak{b}(\rho ,   \rho \circ \mathrm{pr}  )(\sigma) &= b(\sigma, \rho , (\cot \theta)  \rho \circ \mathrm{pr})\,.
\end{align*}  
By writing  \eqref{suitable form of rho} on $\partial \Gamma_*$ and rearranging it we are  led to 
\begin{equation*}
\big(1 - (\cot \theta) \mathfrak{b}( \rho , \rho \circ \mathrm{pr}  )\big)\partial_t \rho = \mathfrak{F}( \rho , \rho \circ \mathrm{pr}) \quad \text{ on }\partial \Gamma_* \,. 
\end{equation*}
Then, it follows that 
\begin{equation} \label{rho and projection}
 \partial_t \rho = \frac {\mathfrak{F}( \rho , \rho \circ \mathrm{pr}) }{1 - \mathfrak{(\cot \theta)b}( \rho , \rho \circ \mathrm{pr}  )}\quad \text{ on }\partial \Gamma_*\,. 
\end{equation}
Note that since $\rho \circ \mathrm{pr}= \rho$ on $\partial \Gamma_*$, \eqref{rho and projection} is purely an equation for  $\rho(\sigma)$ with $\sigma \in \partial \Gamma_*=\{ a_* , b_*\}$.
Near $\partial \Gamma_*$, where the projection $\mathrm{pr}$ is well-defined, the equation \eqref{rho and projection} leads to 
$$
 \partial_t \mu( \mathrm{pr} ( \sigma)) = ( \cot \theta )\partial_t  \rho( \mathrm{pr} (\sigma)) = ( \cot \theta )\Big \{  \frac {\mathfrak{F}( \rho , \rho \circ \mathrm{pr}) }{1 - \mathfrak{(\cot \theta)b}( \rho , \rho \circ \mathrm{pr})}\Big \}\circ \mathrm{pr } ( \sigma). \,
$$
Therefore the final equation for $\rho$ is  

\begin{equation} \label{final nonlinear problem}
\partial_t \rho= \mathfrak{F}( \rho , \rho \circ \mathrm{pr}) +  \mathfrak{(\cot \theta)b}( \rho , \rho \circ \mathrm{pr}  )\Big(\ \Big \{ \frac {\mathfrak{F}( \rho ,\rho \circ \mathrm{pr}) }{1 - \mathfrak{(\cot \theta)b}( \rho , \rho \circ \mathrm{pr}  )}\Big \}\circ \mathrm{pr } \Big ) \quad\text{ on } \Gamma_*.
\end{equation}
We emphasized that  the second term on the right hand side of this equation contains nonlocal terms including the highest order (i.e,  the fourth-order) point evaluation.

Furthermore, the boundary conditions on $\partial \Gamma_* = \{ a_* , b_* \}$ can be written as
\begin{align}
\mathfrak{G}_1 ( \rho) (\sigma) &: =  \langle N , e_2 \rangle - \cos \theta \nonumber\\ 
&\,{} =  \frac{1}{J( \sigma, \rho   , (\cot \theta)  \rho)} \big \langle R \Psi_\sigma + R \Psi_w \rho_ \sigma \,,\, e_2 \big\rangle - \cos \theta = 0 \,, \nonumber\\ 
\mathfrak G_2 (\rho)( \sigma ) &:=\partial_\sigma (\kappa (\sigma, \rho , (\cot \theta)  \rho)) = 0 \,. \label{final nonlinear bcs}
\end{align}
Note that the operators $\mathfrak{G}_1 $ and $\mathfrak{G}_2 $ are completely local as the projection $\mathrm{pr}$ acts as the identity on its image $\partial \Gamma_*$.
 
Altogether, by recalling the parameterization (see \eqref{x and sigma}),  we are led to the following nonlinear, nonlocal problem (see \cite[Equation (20)]{DepnerGarckeKohsaka} for the analogous result obtained for the  mean curvature flow):
\begin{equation}\label{nonlinear, nonlocal}
\left\{
\begin{aligned}
 \partial_t \rho(x,t) &= \mathcal F\Big(x ,\rho(x,t), \partial_x^1\rho(x,t) , \dots, \partial_x^4\rho(x,t),\dots \\
                      &\hspace{-20pt} \dots \rho(\pm l_*,t),\partial_x^1\rho(\pm l_{*},t),\dots, \boldsymbol{\partial_x^4 \rho(\pm l_{*},t)}\Big) & &\text{for } x \in [-l_{*},l_*],\\
0                     &=\mathcal G_1(x,\rho(x,t),\partial_x^1 \rho(x,t))
                                                                 & &\text{at } x=\pm l_*\,, \, \\
0                     &=\mathcal G_2(x,\rho(x,t),\partial_x^1 \rho(x,t),\dots,\partial_x^3 \rho(x,t))                                                       & &\text{at } x=\pm l_*\,, \, \\
\rho(x,0)             &=\rho_0(x)                                & &\text{for } x \in [-l_{*},l_*],\\ 
\end{aligned}
\right.
\end{equation}
where the term $\pm l_*$ should be understood in a sense that  $+l_*$ is taken in \eqref{nonlinear, nonlocal} for the values of $x$ in the neighborhood of $l_*$ and $-l_*$ is taken in \eqref{nonlinear, nonlocal} for the values of $x$ in the neighborhood of $-l_*$.

Note that the functions $\mathcal F, \mathcal G_1, \mathcal G_2$ are smooth with respect to  the $\rho$-dependent variables in some neighborhood of $\rho\equiv 0$ as well as  the first variable. Indeed as  you have seen above, these are rational functions with smooth coefficients  in the $\rho$-dependent variables  (possibly  inside of square roots which are equal to $1$ at $\rho\equiv0$, see \eqref{J}) with nonzero denominator at $\rho\equiv 0$.
 
\begin{rem}
Exactly at this point one needs to use the classical setting, e.g. the parabolic Hölder setting rather than the standard $L_p$-setting (which is  a natural choice), i.e., 
$$
W^{1,p}\big((0,T) ; L_{p}((-l_{*},l_*))\big) \cap L_p \big( (0,T) ; W^{4,p}((-l_{*}, l_*) \big ) \, 
$$
because of the nonlocal term $\boldsymbol{\partial_x^4 \rho(\pm l_{*},t)}$, see \eqref{nonlinear, nonlocal}, which can not be defined in this $L_p$-setting. 
\end{rem}

\subsection{Linearization and general setting}\label{linearization and abstract setting}
For the linearization  of \eqref{nonlinear, nonlocal} around $\rho\equiv 0 $, that is around the stationary solution $\Gamma_*$, we refer to \cite{DepnerGarckelinearized} (see also \cite{DepnerGarckeKohsaka}). More precisely,  the linearization of the surface diffusion equation is done in \cite[Lemma 3.2]{DepnerGarckelinearized} and a similar argument as in \cite[Lemma 3.4]{DepnerGarckelinearized} gives the following linearization of the angle condition
\begin{equation*}
\partial_{n_{\partial \Gamma_*}} \rho + \kappa_{n_{\partial \Gamma_*}}\mu=0 \quad \text{ on } \partial \Gamma_*\,.
\end{equation*}
Altogether, using the following facts (remind that $x$ is the arc-length parameter of $\Gamma_*$ and let $T_*$ denote the unit tangential vector of $\Gamma_*$) 
\begin{equation*}
\begin{aligned}
   \Delta_{\Gamma_*}\rho                 &=\partial_x^2 \rho    & & \text{for }x \in [-l_{*},l_*] \,, \\
    \partial_{n_{\partial \Gamma_*}}\rho &=\nabla_{\Gamma_*}\rho \cdot n_{\partial         \Gamma_*} = \partial_x \rho \,(T_* \cdot n_{\partial_{\Gamma_*}})=\pm         \partial_x \rho                                         & &\text{         at } x=\pm l_{*}\,, \\
    \kappa_{n_{\partial \Gamma_*}}            &=\kappa_*                  & & \text{         at }         x = \pm l_* \,, \\ 
    \mu                                  &=\cot \theta \rho     & & \text{         at } x = \pm l_* \,. 
\end{aligned}
\end{equation*}
we get for the linearization of  \eqref{nonlinear, nonlocal}  around $\rho\equiv0$ the following linear  equation for $\rho$ 
\begin{equation*}
\left\{
 \begin{aligned}
    \partial _t \rho + \partial_x^2( \partial_x^2 + \kappa_*^2) \rho &= f & &\text{         for } x\in [-l_{*},l_{*}]\,,  \\
    \pm\partial_x \rho +\kappa_{*}(\cot \theta)\rho                  &=g_{1} & &\text{         at }  x=\pm         l_{*}\,,  \\
    \partial_x(\partial_x^2 + \kappa_{*}^2 )\rho                     &=g_{2} & &\text{         at }  x=\pm         l_{*}\,.
 \end{aligned}
\right.
\end{equation*}

\begin{rem}
Note that the linearization does not have any nonlocal term particulary because of the fact that we linearized around stationary solutions. 
\end{rem}
Now the nonlinear, nonlocal problem \eqref{nonlinear, nonlocal} can   be restated as a perturbation of a linearized problem, that is of the form \eqref{eq1}, where $\Omega=(-l_*,l_*)$, the operator $A$ is given by 
$$
(Au)(x)=\partial_x^2( \partial_x^2 + \kappa_*^2)u(x)\,, \quad x\in[-l_{*},l_{*}]\,,
$$ 
 and the $B_j$'s are given by 
$$
(B_1u)(x)=\pm\partial_xu(x)+\kappa_*(\cot \theta) u(x)\,, \quad x=\pm l_*\,,
$$
$$
(B_2 u)(x)=\partial_x(\partial_x^2 + \kappa_*^2)u(x)\,, \quad x=\pm l_{*} \,.
$$
If we write \eqref{nonlinear, nonlocal} in the form of \eqref{eq1}, the corresponding $F$ is a regular function defined in a neighborhood of $0$ in $C^4(\o)$ with values in $C(\o)$. Indeed, it is  Frechet-differentiable of arbitrary order in a neighborhood of zero (using the differentiability of composition operators, see e.g. Theorem 1 and 2 of \cite[Section 5.5.3]{RunstSickel}) and a similar argument works for the corresponding functions $G_1$ and $G_2$. In particular,  the assumption (H1) is satisfied with $R = R'$ for sufficiently small $R'$. 

Clearly, the operators $A, B_1, B_2$ satisfy the assumption (H2), the operator $A$ is uniformly strongly parabolic and the operators $B=(B_1, B_2) $ satisfy the normality condition \eqref{normality condition}.

Let us verify that the linearized problem satisfies the complementarity condition, i.e., (L-S).
 For $ x=\pm l_*$ and  $ \lambda\in\overline{\mathbb{C}_+},\, \lambda\neq 0$ we should consider  the following ODE 
\begin{align}\label{L-S example}
\begin{cases}
 \lambda v(y)+\partial_y^4 v(y)=0\,, \quad y>0\,,\\
 \partial_y v(0)=0\,, \quad \partial_y^3 v(0)=0\,,
 \end{cases}
\end{align}
and prove that $v=0$ is the only solution which vanishes at infinity.  This can be done by the energy method. Testing the first line in \eqref{L-S example} with $\bar v$ and using the boundary conditions and the fact that $v$ and therefore its derivatives vanish at infinity (since  solutions of \eqref{L-S example} are the linear combinations of exponential functions) we obtain
\begin{align}
0&=\lambda\int_0^\infty \! |v|^2 \,\mathrm dy+\int_0^\infty \! \bar v \,\partial_y^4 v \,\mathrm dy\nonumber\\
&=\lambda\int_0^\infty \! |v|^2 \,\mathrm dy - \int_0^\infty \! \partial _y \bar v \,\partial_y^3 v \,\mathrm dy\nonumber\\
&=\lambda\int_0^\infty \! |v|^2 \,\mathrm dy+\int_0^\infty \! |\partial_y^2 v|^2 \,\mathrm dy\,. \nonumber
\end{align}
Since $0\neq\lambda\in\overline{\mathbb{C}_+}$, the function $v$ has to be zero and so the claim follows. 

Concerning the compatibility condition, as we have assumed that the initial curve satisfies the contact, angle, and no-flux conditions, we get at $x=\pm l_*$
\begin{eqnarray}
\left\{\begin{array}{lll}
\mathcal G_1(x,\rho_0(x,t),\partial_x^1 \rho_0(x,t))=0\,, \\
\mathcal G_2(x,\rho_0(x,t),\partial_x^1 \rho_0(x,t),\partial_x^2 \rho_0(x,t),\partial_x^3 \rho_0(x,t))=0 \,,\\
 \end{array}\right.
\label{compatibility example}
\end{eqnarray}
which is equivalent to the corresponding compatibility condition \eqref{compatibility} since we do not have zeroth-order boundary conditions.

\subsection{$\rho\equiv 0$ is normally stable}\label{check normally stability}
In this section, we will show that $\rho\equiv 0$, which  corresponds to $\Gamma_*$, is normally stable, i.e., it satisfies the assumption (i)-(iv) in Theorem \ref{theo01}. 

To begin with, let us consider the eigenvalue problem for the linearized operator $A_0$ (see \eqref{linear operator} for the precise definition of $A_0$) which reads as follows
\begin{equation}\label{eigenvalue}
\left \{
 \begin{aligned}
    \lambda u -\partial_x^2( \partial_x^2 + \kappa_{*}^2)u &=0 & &\text{ in         } [-l_*,l_*]\,,\\
    \pm\partial_x u+  \kappa_{*}\cot \theta\, u            &=0 & &\text{ at }          x=\pm l_*\,,\\
    \partial_x(\partial_x^2 + \kappa_{*}^2) u              &=0 & &\text{ at }          x=\pm l_*\,,
 \end{aligned}
\right.
\end{equation}
where $u \in D(A_0)$.
Multiplying  the first line in \eqref{eigenvalue} with $(\partial_x^2+\kappa_{*}^2)u$ and using integration by parts we get  
\begin{equation}\label{energy function}
-\lambda \, I(u,u)+\int_{-l_*}^{l_*}\!
(\partial_x(\partial_x^2 + \kappa_*^2) u)^2 \mathrm d x=0\,,
\end{equation}
where
$$
 I(u,u)=\int_{-l_*}^{l_*} (\partial_x u)^2 \mathrm d x-\kappa_*^2 \int_{-l_*}^{l_*} \! u^2\mathrm d x +  \kappa_*\cot \theta\left(u^2(l_*)+u^2(-l_*)\right)\,.
$$
Note that the same bilinear form appears in \cite[p. 1040]{GarckeItoKohsaka}  (taking $h_+=h_-=\kappa_*\cot \theta  $ in \cite{GarckeItoKohsaka}). Furthermore, we  refer to \cite[Proposition 3.3]{HutchingsMorganRitorAntonio}, where a related bilinear form appears as the second variation of the area functional for double bubbles. 

We first consider the case where $\lambda \neq0$.  The positivity of $I(u,u)$ is shown in \cite[Section 7]{GarckeItoKohsaka}, indeed we have 
$$
h=\kappa_*\cot \theta=\kappa_{*}\cot(-\kappa_* l_*)=-\frac{\kappa_*}{\tan (\kappa_* l_*)}\,,
$$
 which is the same equality as  in \cite[p. 1053]{GarckeItoKohsaka}.
 Now \eqref{energy function} implies  that all eigenvalues except zero are positive, in other word the operator $A_0$  satisfies the assumption (iv) in Theorem \ref{theo01}.

For $\lambda=0$, the bilinear form \eqref{energy function} implies  $\partial_x^2 u+ \kappa_{*}^2u= \tilde c $, where $\tilde c$ is a constant. It follows that  $u=a \sin (\kappa_*x)+ b \cos (\kappa_*x)+ c$, where $a$, $b$ and $c$ are constants. Applying the boundary conditions we get $b=-c \cos \theta\, $ and therefore we obtain a $2$-dimensional eigenspace for the eigenvalue $\lambda=0$. In fact we compute 
$$
N(A_0)=\mathrm{span}\,\{\sin(\kappa_* x)\,,\,1-(\cos \theta )\cos (\kappa_* x)\}\,.
$$ 
  
Next, let us  verify that  the eigenvalue $0$ of $A_0$ is semi-simple. Since the operator $A_0$ has a compact resolvent (see Remark \ref{isolated}),  the semi-simplicity condition is equivalent to the condition that $N(A_0)=N(A_0^2) $ (using  the spectral theory of compact operators, e.g. see \cite[Section 9.9]{Alt}).  To show this, it can  easily be seen that it is sufficient to prove the existence of a projection $$ P:X\rightarrow \mathcal R(P)=N(A_0)$$
such that $P$ commutes with $A_0$, that is, $PA_0 u =A_0Pu(=0)$ for all $u \in D(A_0)$. 

Indeed we can construct such a projection in the following way:
\begin{align}
P:X\rightarrow N(A_0):u\mapsto P u:= \alpha_1(u)v_1+ \alpha_2(u) v_2\,,
\end{align}
where
\begin{eqnarray*}
v_1=1-\cos \theta \cos(\kappa_*x)\,,\quad v_2=\sin(\kappa_* x)\,,
\end{eqnarray*}
\begin{eqnarray*}
\a_1(u)=\frac{\int^{l_*}_{-l_{*}} u(x)\,\mathrm d x}{\int^{l_*}_{-l_*} v_1(x)\mathrm d x}\,,\quad
\a_2(u)=\frac{(u-\a_{1}(u)v_1,v_2)_{-1}}{(v_2,v_2)_{-1}}\,.
\end{eqnarray*}
Here, the inner product is defined as 
$$
(\rho_1,\rho_2)_{-1}:= \int_{-l_*}^{l_*}\!\partial_x u_{\rho_1}\partial_x u_{\rho_2}\, \mathrm{d} x ,
$$
where $u_{\rho_i}\in H^1(-l_*,l_*)$ for a given  $\rho_i \in H^{-1}(-l_* , l_* ):=(H^1(-l_*,l_*))'$ with $\langle\rho_i,1\rangle_{H^{-1}, H^1}=0$ satisfies 

$$
\langle \rho_i, \varphi\rangle_{H^{-1},H^1}=\int_{-l_*}^{l_*}\!\partial_x u_{\rho_i} \partial_x \varphi \, \mathrm{d}x
$$
for all $\varphi \in H^1(-l_*,l_*)$ (see \cite[Section 4]{GarckeItoKohsaka} for more details). Here we denote by $\langle\cdot,\cdot \rangle_{H^{-1}, H^1}$ the duality paring between $H^{-1}(-l_*,l_*)$ and $H^1(-l_*,l_*)$.

Since 
$$
\int_{-l_*}^{l_*} v_1(x) \, \mathrm d x \neq0  , \quad \int_{-l_*}^{l_*} v_2(x) \, \mathrm d x \neq0 \quad \text {and } \int_{-l_*}^{l_*} u(x)-\a_1(u(x))v_1(x)\,\mathrm d x=0 \,,
$$ the coefficients $\a_1(u), \a_2(u)$
are well defined and moreover $\a_i(v_j)=\delta _{ij}$. Therefore  $P$ acts  as identity on its image $N(A_0)$ or equivalently we get $P^2=P$ and $R(P) = N (A_0)$.

Furthermore, for $u \in D(A_0)$ we have  
$$
\a_1(A_0 u)=\frac{\int^{l_*}_{-l_{*}} A_0 u(x)\,\mathrm d x}{\int^{l_*}_{-l_*} v_1(x) \,\mathrm d x} =\frac{\int^{l_*}_{-l_{*}} \partial_x^2( \partial_x^2 + \kappa_*^2)u\,\mathrm d x}{\int^{l_*}_{-l_*} v_1(x) \,\mathrm d x}=
\frac{ \partial_x( \partial_x^2 + \kappa_*^2)u|_{-l_*}^{l_*}}{\int^{l_*}_{-l_*} v_1(x) \, \mathrm d x}=0\,,
$$

$$
\a_2(A_0 u)=\frac{(A_0 u,v_2)_{-1}}{(v_2,v_2)_{-1}}=\frac{(u, A_0 v_2)_{-1}}
{(v_2,v_2)_{-1}}=0\,,
$$
where we have used the facts that $v_2 \in N(A_0)$ and the operator $A_0$ is symmetric with respect to the inner product $(\cdot,\cdot)_{-1}$ (see  \cite[Lemma 5.1]{GarckeItoKohsaka} ). Therefore 
 $$ 
 PA_0 u= \a_1(A_0u)v_{1}+\a_2(A_0u)v_2=0
 $$
which completes the proof of the existence of the desired projection. Consequently    the assumption (iii) in Theorem \ref{theo01} is verified.

We continue by  proving the assumption (i) in Theorem \ref{theo01}, i.e.,   near $\rho \equiv 0$, which corresponds to $\Gamma_*$, the set $\mathcal E$ of equilibria  of  \eqref{final nonlinear problem}, \eqref{final nonlinear bcs}  creates a $C^2$-manifold of dimension $2$.
 According to  \eqref{set of equilibria E}, $\rho \in \mathcal E $ if and only if
\begin{equation} \label{full equilibria} 
     \left \{
   \begin{aligned}
     0 &= \mathfrak{F}( \rho, \rho \circ \mathrm{pr}) \\
       &\qquad + \mathfrak{(\cot \theta)b}( \rho , \rho         \circ \mathrm{pr}  )\Big(\ \Big \{ \frac {\mathfrak{F}( \rho ,\rho         \circ \mathrm{pr}) }{1 - \mathfrak{(\cot \theta)b}( \rho , \rho \circ \mathrm{pr}  )}\Big \}\circ \mathrm{pr } \Big )                 & &\text{on } \Gamma_* \,, \\
       0& = \mathfrak{G}_1 ( \rho ) & &\text{on } \partial \Gamma_* \,, \\
       0& =  \mathfrak G_2 ( \rho ) & &\text{on } \partial \Gamma_* \,.
  \end{aligned}
\right.
\end{equation}
Here and in what follows  we omit the condition $\rho \in B_{X_1}(0,R)$ from the right hand side  for reasons of shortness.
Similarly as before, by writing the first line in \eqref{full equilibria} on $\partial \Gamma_*$ we get  $\mathfrak{F}( \rho, \rho \circ \mathrm{pr}) = 0$ on $\partial \Gamma_*$ and hence   
\begin{equation*}
   \rho \in \mathcal E \Leftrightarrow  
   \left \{
   \begin{aligned}
       &0= \mathfrak{F}( \rho, \rho \circ \mathrm{pr}) & &\text {on } \Gamma_* \,, \\
             &0 = \mathfrak{G}_1 ( \rho ) & &\text{on } \partial \Gamma_* \,, \\
       &0 =  \mathfrak G_2 ( \rho ) & &\text{on } \partial \Gamma_* \,.
  \end{aligned}
\right.
\end{equation*}
Using the definition of $\mathfrak F$ and no-flux condition $\mathfrak G_2$, by applying Gauss's theorem it follows that   
\begin{equation*}
   \rho \in \mathcal E \Leftrightarrow
   \left \{
   \begin{aligned}
      &\rho \in B_{X_1}(0,R)\,, \\
      &\kappa \big( \rho, ( \cot \theta )\rho \circ \mathrm{pr}\big) \text{ is constant } & &\text {on } \Gamma_* \,, \\
      &\mathfrak{G}_1 ( \rho )          = \langle N , e_2 \rangle - \cos \theta  = 0 & &\text{on } \partial \Gamma_* \,. \\
  \end{aligned}
\right.
\end{equation*}
    Therefore, by taking into account  Lemma \ref{mu1}  we conclude that      
$$
\mathcal E=\Big\{\rho: \rho \text{ parameterizes an element of } CA_r(a_1,-r\cos \theta) \text{ sufficiently close to $\Gamma_*$} \Big\}.
$$
Clearly $\mathcal E \neq \emptyset$ as $\rho \equiv 0$ parameterizes $\Gamma_* = CA_{r_*}(0,-r_* \cos \theta)$.  The following lemma demonstrates that actually,   all the circular arcs $CA_r(a_1,-r\cos \theta)$ sufficiently close to $\Gamma_*$ can be parameterized by a unique function $\rho$    depending smoothly on  $a_1$ and $r$. The idea is to use the  implicit function theorem of Hildebrandt and Graves, see Zeidler \cite[Theorem 4.B]{Zeidlernonlinear1}.
\begin{lem} \label{Lem: manifold}
There exist positive numbers $\epsilon$ and $R''$ such that each of the circular arcs  $CA_{r}(a_{1}, -r \cos \theta)$ with $(a_1, r ) \in B_{\mR^2} ((0, r_* ), \epsilon )$  is parameterized by a unique   $\rho \in B_{X_1}(0, R'')$. Moreover the set $\mathcal E$ creates a $C^2$-manifold of dimension $2$ in $X_1 = C^{ 4 + \a}([-l_*, l_*] )$.
\end{lem}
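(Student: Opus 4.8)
The plan is to realize each circular arc $CA_r(a_1,-r\cos\theta)$ near $\Gamma_*$ as a graph $\Psi(\sigma,\rho(\sigma),(\cot\theta)\rho\circ\mathrm{pr}(\sigma))$ over $\Gamma_*$ and to show that the correspondence $(a_1,r)\mapsto\rho$ is well defined and $C^2$ via the implicit function theorem of Hildebrandt--Graves (\cite[Theorem~4.B]{Zeidlernonlinear1}). First I would set up a map $\mathcal{H}$ that, given parameters $(a_1,r)$ close to $(0,r_*)$ and a candidate function $\rho\in C^{4+\a}([-l_*,l_*])$, measures the failure of $\Psi(\cdot,\rho,(\cot\theta)\rho\circ\mathrm{pr})$ to parameterize $CA_r(a_1,-r\cos\theta)$. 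Concretely, since the circular arc is the zero set of $h_{a_1,r}(x,y):=(x-a_1)^2+(y+r\cos\theta)^2-r^2$, I would define
$$
\mathcal{H}:B_{\mR^2}((0,r_*),\epsilon_0)\times B_{X_1}(0,R_0)\longrightarrow C^{4+\a}([-l_*,l_*])\,,\qquad
\mathcal{H}(a_1,r,\rho)(x):=h_{a_1,r}\big(\Psi(x,\rho(x),(\cot\theta)\rho\circ\mathrm{pr}(x))\big)\,,
$$
so that $\mathcal{H}(a_1,r,\rho)\equiv 0$ holds precisely when the graph of $\rho$ lies on the arc. Because $\Psi$ is smooth, $\mathrm{pr}$ is smooth near $\partial\Gamma_*$ on the support of $\tau_*$, and $h_{a_1,r}$ is a polynomial, $\mathcal{H}$ is smooth (in particular $C^2$) in all arguments, and by construction $\mathcal{H}(0,r_*,0)=0$ since $\Gamma_*=CA_{r_*}(0,-r_*\cos\theta)$.

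Next I would verify the invertibility hypothesis. The partial Fréchet derivative $D_\rho\mathcal{H}(0,r_*,0)$ applied to $v\in C^{4+\a}$ is, by the chain rule, $x\mapsto \langle \nabla h_{0,r_*}(\Phi_*(x)),\ \Psi_w\,v(x)+\Psi_r\,(\cot\theta)(v\circ\mathrm{pr})(x)\rangle$; at $\rho\equiv 0$ one has $\Psi_w=N_*$, and $\nabla h_{0,r_*}(\Phi_*(x))$ is (twice) the outward radial vector at the point $\Phi_*(x)$ of the arc $\Gamma_*$, which is exactly $\mp N_*$ (nonzero) up to the sign fixed by our normal convention; hence $\langle\nabla h_{0,r_*}(\Phi_*(x)),N_*(x)\rangle$ is a nowhere-vanishing scalar function $c(x)\in C^{3+\a}$. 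The term involving $\Psi_r$ is supported near $\partial\Gamma_*$, so $D_\rho\mathcal{H}(0,r_*,0)$ is the multiplication operator $v\mapsto c\,v$ plus a compact (lower-order, boundary-localized) perturbation, and it is an isomorphism of $C^{4+\a}([-l_*,l_*])$; injectivity follows since $c(x)v(x)=0$ forces $v\equiv0$, and surjectivity follows because multiplication by $c$ is invertible on $C^{4+\a}$ (as $c$ and $1/c$ lie in $C^{3+\a}$, which multiplies $C^{4+\a}$ with enough regularity — one checks this directly, or works at the level $C^{4+\a}$ knowing $c$ is smooth enough). The Hildebrandt--Graves theorem then yields $\epsilon,R''>0$ and a unique $C^2$ map $(a_1,r)\mapsto\rho=\rho(a_1,r)\in B_{X_1}(0,R'')$ with $\mathcal{H}(a_1,r,\rho(a_1,r))\equiv0$, i.e.\ each such arc is parameterized by a unique $\rho$ in the ball.

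Finally I would identify $\mathcal{E}$ with the image of this chart. From the discussion preceding the lemma we already know $\mathcal{E}=\{\rho:\rho\text{ parameterizes some }CA_r(a_1,-r\cos\theta)\text{ close to }\Gamma_*\}$; combined with the chart just constructed this gives $\mathcal{E}\cap B_{X_1}(0,R'')=\{\rho(a_1,r):(a_1,r)\in B_{\mR^2}((0,r_*),\epsilon)\}$, the image of an injective $C^2$ immersion (immersivity: $\partial_{(a_1,r)}\rho(0,r_*)$ has rank $2$ because the two tangent directions correspond to genuinely distinct deformations of the arc — translating the center along $x^1$ versus changing the radius — which are linearly independent already in $\mR^2$-valued velocities, hence as elements of $C^{4+\a}$). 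Therefore $\mathcal{E}$ is a $2$-dimensional $C^2$-submanifold of $X_1=C^{4+\a}([-l_*,l_*])$ near $\rho\equiv0$. I expect the main obstacle to be the verification that $D_\rho\mathcal{H}(0,r_*,0)$ is an isomorphism with the right mapping properties — in particular keeping careful track of the boundary-localized $\Psi_r$-contribution and confirming that the nonlocal term $(\cot\theta)v\circ\mathrm{pr}$, which only appears near $\partial\Gamma_*$, does not destroy invertibility (it does not, being a rank-related lower-order perturbation of an invertible multiplication operator).
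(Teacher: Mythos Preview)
Your approach is essentially the paper's: apply the implicit function theorem to the scalar defining map $\mathcal H(a_1,r,\rho)=\|\Psi(\cdot,\rho,(\cot\theta)\rho\circ\mathrm{pr})-(a_1,-r\cos\theta)\|^2-r^2$. However, you miss the key simplification that makes the invertibility of $D_\rho\mathcal H(0,r_*,0)$ immediate, and without it your argument has a gap. At $(0,r_*,0)$ one has $\nabla h_{0,r_*}(\Phi_*(\sigma))=2(\sigma-(0,-r_*\cos\theta))=2r_*N_*(\sigma)$, so the contribution of the $\Psi_r$-term is $2r_*\langle N_*(\sigma),\tau_*(\sigma)\rangle\,(\cot\theta)(v\circ\mathrm{pr})(\sigma)=0$, because $\tau_*$ is tangent to $\Gamma_*$. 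Hence $D_\rho\mathcal H(0,r_*,0)(v)=2r_*\,v$ is simply a nonzero scalar multiple of the identity, and bijectivity is trivial. Your argument instead treats the nonlocal term as a compact perturbation and then claims injectivity from ``$c(x)v(x)=0\Rightarrow v\equiv0$''; but that is injectivity of the unperturbed multiplication operator, not of the full operator $c\cdot{}+K$. A compact perturbation of an isomorphism is Fredholm of index zero, not automatically injective, so as written the step does not go through. (Incidentally, your remark that $c\in C^{3+\a}$ suffices for multiplication to act on $C^{4+\a}$ is false in general; here $c$ is in fact constant, so the issue disappears once you notice the vanishing above.)

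For the rank-$2$ assertion the paper does more than argue abstractly: it differentiates the identity $\mathcal H(a_1,r,\rho(a_1,r))=0$ with respect to $a_1$ and $r$ at $(0,r_*)$ and, using again $\sigma-(0,-r_*\cos\theta)=r_*N_*(\sigma)$, obtains the explicit formulas
\[
\partial_r\rho(x,0,r_*)=1-\cos\theta\cos(\kappa_* x),\qquad \partial_{a_1}\rho(x,0,r_*)=-\sin(\kappa_* x),
\]
which are visibly linearly independent. Your heuristic (``translation versus dilation are distinct deformations'') is plausible but not a proof, and more importantly these explicit expressions are exactly the spanning set of $N(A_0)$ computed earlier, so they are what one needs to verify $T_0\mathcal E=N(A_0)$ in assumption~(ii) of Theorem~\ref{theo01}.
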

\begin{proof}Without loss of generality we may assume that $\Gamma_*$ is centered at the origin of $\mathbb R^2$. We use the implicit function theorem with 
\begin{eqnarray*}
X = \mR^2 \,, & Y = Z = C^{4+\a}([-l_* , l_*])\,, &(x_0,y_0) =((0, r_{*}),0)
\end{eqnarray*}
and 
\begin{equation*} 
 F : X \times Y  \longrightarrow  Z  
\end{equation*}
where
(remember our abuse of notation \eqref{abuse of notation}) \begin{equation} \label{Eq: implicit}
F( a_1 ,r ,\rho )(\sigma):=\norm{\Psi(\sigma,\rho ( \sigma ),\mu(\mathrm{pr}(\sigma )) - (a_1, r \cos \theta )}^2 - r^2 \,,
\end{equation}
for all $(a_1,r) \in X,\, \rho \in Y$ and $\sigma \in [ - l_* , l_* ]$. Here 
$$\Psi (\sigma, \rho ( \sigma ), \mu(\mathrm{pr} ( \sigma ) ) = \sigma + \rho(\sigma) N_*(\sigma) + \mu ( \mathrm{pr} ( \sigma )) \tau_*(\sigma)  , \quad  \mu \circ \mathrm{pr} = (\cot \theta )\rho \circ \mathrm{pr}\,.
$$ 
The derivative $F_\rho ( 0, r_*, 0)$ is given by 
$$
F_{\rho}( 0, r_*, 0)(v)(\sigma) \ = \big \langle vN_*(\sigma) + (\cot \theta) (v \circ \mathrm{pr)} \tau_*(\sigma)\, , \, \sigma - ( 0, -r_* \cot \theta) \big \rangle \,.   
$$
Using the fact that $
\sigma-( 0, -r_* \cot \theta)=r_* N_*(\sigma)
$  (see Figure \ref{fig:}) and that $\tau_*$ is a tangential vector field, we get
$$
F_{\rho}( 0, r_*, 0)(v)= r_* v 
$$
which implies that $F_\rho ( 0, r_*, 0)$ is bijective. Furthermore, it is easy to see that $F$ is a smooth map on a neighborhood of $ (0, r_*, 0)$. 

Hence  there exist positive numbers $\epsilon$ and $R''$ such that, for every $(a_1, r ) \in B_{\mR^2} ((0, r_* ), \epsilon )$, there is  exactly one    $\rho(a_{1}, r) \in X_1$ for which $\rho \in B_{X_1}(0, R'')$ and $F(a_{1}, r, \rho(a_{1},r))=0$, i.e., 
\begin{align} \label{Eq: zero level set}
\norm{\Psi\big(\sigma,\rho ( \sigma, a_1, r),\mu(\mathrm{pr}(\sigma ))\big) - (a_1, r \cos \theta )}^2 - r^2 = 0 \quad \text{ for } \sigma \in \Gamma_* \,.
\end{align}
 In addition the mapping $(a_{1}, r) \mapsto \rho ( a_1, r )$ is smooth on a neighborhood of $ x_0 = (0,r_*) $. Finally it is not  hard to see that the curve $\Gamma$ parameterized by $\rho = \rho (a_1, r)$ (i.e., the solution to $F=0$) belongs to $CA_r ( a_1, -r \cos \theta )$. Indeed, the contact condition is satisfied as we have already included here the linear dependency  \eqref{mu} and now  taking into account the  relationship between the center and the radius (see \eqref{Eq: implicit}),  we find easily that  the curve $\Gamma$ satisfies   the desired angle condition (see Figure \ref{fig:spherical arc}). This  proves the first assertion  of  the lemma.

Define a function
\begin{align*}
\Upsilon: U        &\longrightarrow X_1 \\
      ( a_1, r )&\mapsto \rho ( a_1 , r )\,,  
\end{align*}
where $U = B_{\mR^2}((0, r_*), \epsilon )$.
Clearly, the function $\Upsilon$ is smooth and so  in particular $C^2$. Furthermore, $\Upsilon ( U ) = \mathcal E $ with the constant $R$ in  defining relation \eqref{set of equilibria E}  replaced by $R''$; and     $ \Upsilon ( (0,r_*) ) = 0 $. Now to prove that  the set $\mathcal E$ creates a $C^2$-manifold of dimension $2$ in $X_1 = C^{ 4 + \a}([-l_*, l_*] )$ we only need to verify  that the rank of $\Upsilon'((0,r_*)) $ is equal to $2$. (See the definition of a manifold on page  \pageref{def equilibria1}.)  

Differentiating \eqref{Eq: zero level set} with respect to $r$ and evaluating it at $( a_{1}, r) = ( 0, r_* )$,  we get 
\begin{align*}
\big\langle \partial_r \rho(\sigma, 0, r_*)N_*(\sigma)+\partial_r \mu(\mathrm{pr}(\sigma), 0, r_*)\tau_*(\sigma)- (0, &-\cos \theta) \,,\\
& \sigma- (0, -r_* \cos \theta)\big \rangle - r_* = 0.
\end{align*}
Again using the fact that $
\sigma- (0, -r_* \cos \theta) =  r_* N_*(\sigma)
$   and that $\tau_*$ is a tangential vector field, we get 
$$
r_* \partial _r \rho(\sigma, 0, r_*)+ \cos \theta (\sigma_2+r_* \cos \theta)=r_*\,.
$$
By writing it in spherical coordinates, i.e., $$ \sigma=(\sigma_1,\sigma_2)=\Phi_*(x)=\left(r_*\sin(\frac{x}{r_*})\,,\, r_*\cos(\frac{x}{r_*}) -r_* \cos \theta\right)$$ we obtain 
$$
\partial_r \rho( x, 0, r_* )=1- \cos \theta \cos (\kappa_* x)\,.
$$ 
Analogously, we get $\partial_{a_1} \rho( x, 0, r_* )= -\sin(\kappa_* x)$, which finishes the proof.

\end{proof}
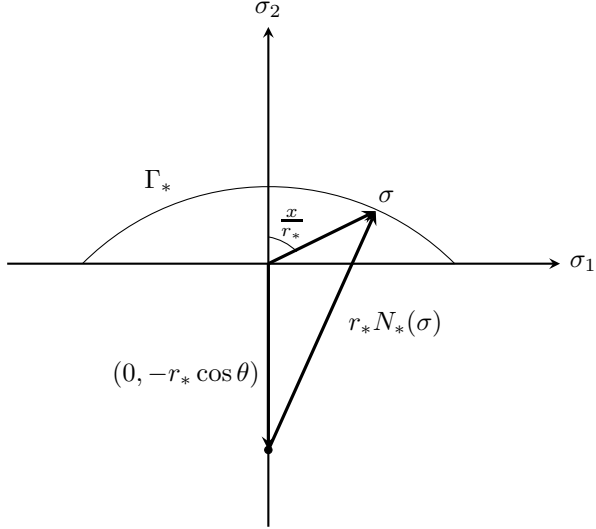
\begin{figure}[htbp]
        \centering
        \begin{tikzpicture}[scale=0.7,>=stealth]
                \draw (0,0) arc (45:135:5)node[below=-1.1cm,right=0.7cm]{$\Gamma_*$};
               \draw [->,thick](-8.5,0)--(2,0)node[right]{$\sigma_1$};
               \draw [->,thick](-3.54,-5)--(-3.54,4.5)node[above]{$\sigma_{2}$};
               \filldraw [black] (-3.54,-3.54) circle (2pt)node[below=-1cm, right=-2.2cm]{$(0,-r_*\cos \theta)$};
                \draw [->,very thick](-3.54,0) -- (-3.54,-3.54);
                \draw [->,very thick](-3.54,0) -- (-1.5,1)node[below=-0.2cm,right=-.1cm]{$\sigma$};
               \draw[->,very thick] (-3.54,-3.54) -- (-1.5,1)node[below=1.5cm,right=-0.5cm]{$r_* N_*(\sigma)$};
               \draw (-3.54,0)--(0,0);
               \draw (-3.,.23) arc(45:80:1)node[below=-5pt,right=0pt]{$\frac{x}{r_*}$};      
                      \end{tikzpicture}
                       \caption{The stationary solution $\Gamma_*$}
               \label{fig:}
\end{figure}
Finally it remains to prove the assumption (ii). This is  an immediate consequence of the facts that  $T_{0} \mathcal E \subseteq N(A_0) $, see \eqref{inclusion N}, and that $\mathrm{dim}( \mathcal E) = \mathrm{dim}( N( A_0 ))$. 

In summary, all the assumptions of Theorem \ref{theo01} for $R=\mathrm{min}\{R',R''\}$ are satisfied. Thus applying  Theorem \ref{theo01}, we obtain 
\begin{theo}\label{main theorem example}
Suppose $\Gamma_*$ is an arbitrary circular arc intersecting the $x^1$-axis with an  angle $\theta$. Then $\rho\equiv 0$ is a stable equilibrium of \eqref{nonlinear, nonlocal} in the class of all initial values $\rho_0 \in X_1=C^{4+\a}([-l_*,l_*])$ satisfies   the compatibility condition \eqref{compatibility example}. Moreover there exists a $\delta>0$ such that if $\norm{\rho_0}_{X_1}< \delta$ then the corresponding solution of \eqref{nonlinear, nonlocal} exists globally in $ C^{1+\frac{\alpha}{4}, 4+\alpha}([0,\infty)\times[-l_*,l_*])$ and converges at an exponential rate in $X_1$ to some equilibrium $\rho_\infty$ as $t\rightarrow \infty$. 
\end{theo}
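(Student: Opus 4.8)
The plan is to deduce Theorem~\ref{main theorem example} as a direct application of the abstract stability result Theorem~\ref{theo01} to the nonlocal, nonlinear boundary value problem \eqref{nonlinear, nonlocal}, which has already been recast in Section~\ref{linearization and abstract setting} in the general form \eqref{eq1} with $n=1$, $N=1$, $2m=4$, $\Omega=(-l_*,l_*)$, linear part $Au=\partial_x^2(\partial_x^2+\kappa_*^2)u$, boundary operators $B_1u=\pm\partial_x u+\kappa_*(\cot\theta)u$ and $B_2u=\partial_x(\partial_x^2+\kappa_*^2)u$ at $x=\pm l_*$, and smooth nonlinearities $F,G_1,G_2$. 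What remains is therefore to check, one by one, the hypotheses of Theorem~\ref{theo01}: the regularity conditions (H1), (H2), strong parabolicity (SP), the Lopatinskii--Shapiro condition (LS), the normality condition \eqref{normality condition}, and the four normal-stability conditions (i)--(iv); then to invoke Theorem~\ref{theo01} and translate its conclusion into the geometric language of the statement.

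First I would record that (H1) holds because $\mathcal F,\mathcal G_1,\mathcal G_2$ are, by the explicit formulas for $V$, $\kappa$ and $J$ in \eqref{A normal}, \eqref{A curvature}, \eqref{J}, rational functions of the $\rho$-jet variables with smooth coefficients and nonvanishing denominator at $\rho\equiv0$, so that the associated composition operators are smooth between the relevant H\"older spaces by the results of \cite{RunstSickel}; and (H2) is immediate since the coefficients of $A,B_1,B_2$ are constant. Strong parabolicity (SP) follows from the principal symbol of $A$ being $\xi^4>0$, and (LS) reduces to showing that the only decaying solution of the fourth-order ODE \eqref{L-S example} with $\partial_y v(0)=\partial_y^3 v(0)=0$ is $v\equiv0$, which follows from the energy identity $\lambda\int_0^\infty|v|^2+\int_0^\infty|\partial_y^2 v|^2=0$ since $\lambda\in\overline{\mathbb{C}_+}\setminus\{0\}$. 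The normality condition \eqref{normality condition} is checked directly from the leading coefficients $\pm1$ of $B_1$ (order $1$) and $\pm1$ of $B_2$ (order $3$), which are invertible $1\times1$ matrices at each boundary point.

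The substantial part is normal stability. For (iv) I would test the eigenvalue equation \eqref{eigenvalue} against $(\partial_x^2+\kappa_*^2)u$ and integrate by parts to obtain \eqref{energy function}; since the bilinear form $I(u,u)$ is positive off the kernel --- this is exactly the coercivity established in \cite[Section~7]{GarckeItoKohsaka} with $h=\kappa_*\cot\theta=-\kappa_*/\tan(\kappa_* l_*)$ --- every nonzero eigenvalue is real and positive, so $\sigma(A_0)\setminus\{0\}\subset\mathbb{C}_+$. Solving $\partial_x^2 u+\kappa_*^2 u=\tilde c$ under the boundary conditions gives $N(A_0)=\mathrm{span}\{\sin(\kappa_* x),\,1-(\cos\theta)\cos(\kappa_* x)\}$, a two-dimensional space. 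Semi-simplicity (iii), equivalent to $N(A_0)=N(A_0^2)$ by compactness of the resolvent, I would get by exhibiting the projection $P$ onto $N(A_0)$ built from the functionals $\alpha_1,\alpha_2$ via the $H^{-1}$ inner product $(\cdot,\cdot)_{-1}$, using that $\alpha_1(A_0u)=0$ by Gauss's theorem and the no-flux condition, and $\alpha_2(A_0u)=(u,A_0v_2)_{-1}/(v_2,v_2)_{-1}=0$ by the $(\cdot,\cdot)_{-1}$-symmetry of $A_0$ and $v_2\in N(A_0)$, so $PA_0u=0=A_0Pu$. For (i) I would invoke Lemma~\ref{Lem: manifold}: the implicit function theorem of Hildebrandt--Graves applied to $F(a_1,r,\rho)=\|\Psi(\sigma,\rho,\mu)-(a_1,r\cos\theta)\|^2-r^2$, whose $\rho$-derivative at $(0,r_*,0)$ is multiplication by $r_*\neq0$ thanks to $\sigma-(0,-r_*\cos\theta)=r_*N_*(\sigma)$, yields a smooth $2$-parameter family $\rho(a_1,r)$ parameterizing exactly the circular arcs $CA_r(a_1,-r\cos\theta)$ near $\Gamma_*$, hence $\mathcal E$ is a $C^2$-manifold of dimension $2$. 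Finally (ii) follows from \eqref{inclusion N}, i.e.\ $T_0\mathcal E\subseteq N(A_0)$, together with $\dim T_0\mathcal E=2=\dim N(A_0)$; one may double-check via $\partial_r\rho(x,0,r_*)=1-\cos\theta\cos(\kappa_* x)$ and $\partial_{a_1}\rho(x,0,r_*)=-\sin(\kappa_* x)$.

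With all hypotheses in hand for $R=\min\{R',R''\}$, I would apply Theorem~\ref{theo01}: it gives stability of $\rho\equiv0$ in $X_1=C^{4+\alpha}([-l_*,l_*])$, and for $\rho_0\in X_1$ with $\|\rho_0\|_{X_1}<\delta$ satisfying \eqref{compatibility example} (which, there being no zeroth-order boundary conditions, is equivalent to the contact/angle/no-flux conditions) a unique global solution in $\mathbb E_1(\infty)=C^{1+\frac{\alpha}{4},\,4+\alpha}([0,\infty)\times[-l_*,l_*])$ converging exponentially in $X_1$ to some equilibrium $\rho_\infty$. The main obstacle I expect is not the final invocation of Theorem~\ref{theo01} but the two ingredients feeding (i)--(iii): proving that $\mathcal E$ is a genuine finite-dimensional $C^2$-manifold whose tangent space is exactly $N(A_0)$ --- which requires the implicit-function argument showing that every nearby circular arc is a \emph{graph} over $\Gamma_*$ once the nonlocal dependency $\mu=(\cot\theta)\rho$ is built in --- and establishing semi-simplicity of the zero eigenvalue, which hinges on the $(\cdot,\cdot)_{-1}$-symmetry of $A_0$ together with the delicate positivity of $I(u,u)$ taken over from \cite{GarckeItoKohsaka}.
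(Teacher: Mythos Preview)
Your proposal is correct and follows essentially the same route as the paper: both verify (H1), (H2), (SP), (LS), and the normality condition as in Section~\ref{linearization and abstract setting}, then establish normal stability (i)--(iv) exactly via the bilinear form \eqref{energy function} and the positivity from \cite{GarckeItoKohsaka}, the explicit computation of $N(A_0)$, the projection $P$ built from $\alpha_1,\alpha_2$ and the $(\cdot,\cdot)_{-1}$-symmetry, and the implicit-function argument of Lemma~\ref{Lem: manifold}, before invoking Theorem~\ref{theo01} with $R=\min\{R',R''\}$. The only cosmetic difference is that the paper spreads these verifications across Sections~\ref{linearization and abstract setting} and~\ref{check normally stability} rather than collecting them in a single proof block.
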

In this sense, the lens-shaped network generated by $\Gamma_*$ is  stable under the surface diffusion flow. In addition, every lens-shaped solution of \eqref{surface diffusion for lens}  that starts sufficiently close to the one generated by $\Gamma_*$ and satisfies the angle condition  and the balance of flux condition at $t=0$ exists globally and converges to some lens-shaped network generated by a circular arc at an exponential rate as $t\rightarrow \infty$.

\section{Appendix}
Throughout the appendix we follow the notation of the previous sections, except that here $u'$ denotes the derivative of a function $u$ with respect to time.

Let $\sigma^-(-A_0)$ denote the subset of $\sigma(-A_0)$ consisting of elements with negative real parts. Note that  $\sigma^-(-A_0)$ is a spectral set due to  Remark \ref{isolated}. Clearly $\sigma^-(-A_0) = -\sigma_s $ and $P^- = P^s$, where $P^-$ is the spectral projection associated to $\sigma^-(-A_0)$.

\subsection{Asymptotic behavior for linear scalar equations}\label{abilp}
Such a result is proven  in \cite[Theorem.~0.1]{BraunerHulshofLunardi} for a single equation of second order  with first-order boundary condition. Here we  extend this result to a single equation of order $2m$ with $m$ boundary conditions.  Precisely we consider the  linear problem \eqref{linear} 
with $\boldsymbol {N=1}$, i.e., 
\begin{equation} \label{linear single}
\left\{
 \begin{aligned}
    \partial _t u+Au &= f(t)  & &\mbox{ in }\Omega \,,         & &t \geq0         \,, \\ 
     Bu              &= g(t)  & &\mbox{ on }\partial\Omega \,, & &t \geq0         \,,\\
     u(0)            &= u_{0} & &\mbox{ in }\Omega \,, \\
 \end{aligned}
\right.
\end{equation}
where $\Omega$ is a bounded domain in $\mathbb{R}^n$ with $C^{2m+\a}$ boundary, $0<\a<1$, $g=(g_1,\dots,g_{m}), B=(B_1, \dots, B_{m})$, $u_0 \in C^{2m+\a}(\o)$ and the operators $A$ and $B$ satisfy the  conditions (H2), (L-S), (SP),  and  the normality condition  \eqref{normality condition}. Note that the normality condition   in particular implies that 
$$
0 \leq m_1 < m_2<\cdots<m_m \leq 2m-1 \,.
$$

For  convenience, we  set 
$$
\mathcal{L}=-A \quad \text{ and }\quad L=-A_0\,.
$$
The realisation $L$ of $\mathcal L$ with homogeneous boundary conditions in $X=C(\o)$,  defined similarly as \eqref{linear operator}, is a sectorial operator by Theorem \ref{known result}. Furthermore,  if $f \in \mathbb E_0(T)$, $g \in \mathbb F(T)$ and $u_0 \in C^{2m+ \a}(\o)$ satisfying the compatibility condition \eqref{compatibility linear}, the unique solution of \eqref{linear single} belongs to $\mE_1(T)$ for all $T$ and in addition it  is given by the extension of
the Balakrishnan formula with some adaptations (see (37)-(40) of §7 in \cite{lunardisinestrariwah})
\begin{align}
 u(\cdot,t)&= e^{tL}(u_0-n(\cdot,0))+\int_0^t \! e^{(t-s)L}[f(\cdot,s)+\mathcal{L}
 n(\cdot,s)- n'_1(\cdot,0)]\,\mathrm{d}s\nonumber\\
 & \quad +n_1(\cdot,t)-\int_0^t \! e^{(t-s)L}(n_1'(\cdot,s)-n_1'(\cdot,0))\,\mathrm{d} s \nonumber\\
 & \quad+n_2(\cdot,0)-L \int_0^t \!e^{(t-s)L}[n_{2}(\cdot,s)- n_2(\cdot,0)] \, \mathrm{d} s \label{BF 1}\\
 &= e^{tL} u_0 + \int_0^t \! e^{(t-s)L}[f(\cdot,s)+\mathcal{L}n(\cdot,s)]
\, \mathrm{d} s  \nonumber\\
& \quad -L\int_0^t \! e^{(t-s)L}n(\cdot,s)\,\mathrm d s\,, \quad 0 \leq t \leq T \,.  \label{Balakrishnan formula} 
\end{align}
Here 
\begin{equation*}
n(t)= \mathcal{N}( g_1(t), \dots, g_m(t))=\sum\limits_{s=1}^m \mathcal{N}_s \mathcal{M}_s ( g_1(t), \dots, g_s(t))\,,  
\end{equation*}
\begin{equation}
n_1(t)=\begin{cases}
         0& \text{ if } m_1>0\\
         \mathcal{N}_1 \mathcal{M}_1(g_1(t)) & \text{ if } m_1=0\,
       \end{cases} \quad \text{ and } \quad n_2(t)=n(t)-n_1(t)\,,
              \end{equation}
 where the operator $\mathcal{N}$ given in the following theorem is a lifting operator with an explicit construction such that
\begin{align}\label{extension operator for N=1}
\begin{cases}
\mathcal{N}\in L(\prod_{j=1}^m C^{2m+\theta'-m_j}(\partial \Omega),C^{2m+\theta'}(\o)),\quad \forall \, \theta'\in [0,\a]\,,\vspace{8pt}\\
B_j(\mathcal{N}(g_1,\dots, g_m))(x)=g_j(x), \quad x\in \partial \Omega ,\quad j=1,\dots, m\,.
\end{cases}
\end{align} 
\begin{theo}\label{scalar result theorem}
Given $s=1,\dots, m$, there exist
\begin{equation*}
\mathcal{M}_s\in L(\prod_{j=1}^s C^{\theta-m_j}(\partial \Omega),C^{\theta -m_s}(\partial \Omega)),\quad \forall \theta\in[m_s,2m+\a],
\end{equation*}
and
\begin{equation*}
\mathcal{N}_s\in L(C^r(\partial \Omega); C^{r+m_s}(\o)),\quad \forall r\in [0,2m+\a -m_j]
\end{equation*}
such that, setting 
\begin{equation*}
\mathcal{N}(\psi_1,\dots, \psi_m)= \sum_{s=1}^m \mathcal{N}_s \mathcal{M}_s(\psi_1,\dots,\psi_s)\,, \end{equation*}
we have 
\begin{equation} \label{mathcal N1}
\mathcal{N}\in L(\prod_{j=1}^m C^{2m+\theta'-m_j}(\partial \Omega),C^{2m+\theta'}(\o)),\quad \forall \theta'\in [0,\a]\,,
\end{equation}
and
\begin{equation*}
B_j(\mathcal{N}(\psi_1,\dots, \psi_m))(x)=\psi_j(x), \quad x\in \partial \Omega ,\quad j=1,\dots, m\,.
\end{equation*}
Moreover, for each $u \in C(\partial \Omega)$,
\begin{equation}\label{N_s}
D_x^l \mathcal N_s u(x)=0 \,, \quad x \in \partial \Omega \,,\, l \in \mathbb{N}^n \,, \, |l|<m_s \,, 
\end{equation}
which in particular implies that
\begin{equation*}
(B_j \mathcal{N}_s u)(x) \equiv 0,  \quad x \in \partial \Omega, \text{ for } j<s\,.
\end{equation*}
\end{theo}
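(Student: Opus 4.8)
The plan is to follow the construction carried out in \S7 of \cite{lunardisinestrariwah} for the lowest-order case (the second-order, one-boundary-condition situation going back to \cite{BraunerHulshofLunardi}) and adapt it to $m$ boundary operators of orders $m_1<\dots<m_m$. The point is that the triangular ansatz $\mathcal N=\sum_{s=1}^m\mathcal N_s\mathcal M_s$ decouples the problem: each $\mathcal N_s$ is a lifting that is ``seen'' only by $B_s$ and is invisible to $B_1,\dots,B_{s-1}$, while the $\mathcal M_s$ are chosen recursively to cancel the spurious contributions of $\mathcal N_1,\dots,\mathcal N_{s-1}$ to $B_s$.

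\emph{Construction of $\mathcal N_s$.} First I would fix a tubular neighbourhood $\mathcal U$ of $\partial\Omega$ and write $x=x(\sigma,d)$ with $\sigma\in\partial\Omega$, $d=\mathrm{dist}(x,\partial\Omega)$; since $\partial\Omega\in C^{2m+\a}$ the distance function and these boundary coordinates are of class $C^{2m+\a}$, which is exactly enough for what follows. In the scalar case $N=1$ the normality condition \eqref{normality condition} says precisely that $c_s(\sigma):=\sum_{|\beta|=m_s}b^{s}_\beta(\sigma)\nu(\sigma)^\beta$ never vanishes, and $c_s,c_s^{-1}\in C^{2m+\a-m_s}(\partial\Omega)$. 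Choosing a cut-off $\chi\in C^\infty_c([0,d_0))$ with $\chi\equiv1$ near $0$ and a bounded ``constant along normals'' extension $E\colon C^r(\partial\Omega)\to C^r(\mathcal U)$, I would set
\[
(\mathcal N_s u)(x(\sigma,d)):=\frac{(-1)^{m_s}}{(m_s-1)!}\,\chi(d)\int_0^d(d-t)^{m_s-1}\,\frac{(Eu)(x(\sigma,t))}{c_s(\sigma)}\,\mathrm dt ,
\]
extended by zero outside $\mathcal U$. The $m_s$-fold antiderivative in the normal variable raises the normal regularity by $m_s$ while preserving the tangential regularity, giving $\mathcal N_s\in L(C^r(\partial\Omega),C^{r+m_s}(\o))$ for $r\in[0,2m+\a-m_s]$; moreover $\mathcal N_su$ vanishes to order $m_s$ at $\partial\Omega$, which yields \eqref{N_s} and hence $B_j\mathcal N_s u\equiv0$ for $j<s$ (those operators have order $m_j\le m_{s-1}<m_s$). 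Finally, since on $\partial\Omega$ only the principal part of $B_s$ survives and the normalising constants were chosen accordingly, a short computation using $\nabla d=-\nu$ on $\partial\Omega$ gives $B_s(\mathcal N_su)=u$ on $\partial\Omega$.

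\emph{Recursion for $\mathcal M_s$ and assembly.} Next I would put $\mathcal M_1:=\mathrm{id}$ and, inductively,
\[
\mathcal M_j(\psi_1,\dots,\psi_j):=\psi_j-\sum_{s=1}^{j-1}B_j\bigl(\mathcal N_s\mathcal M_s(\psi_1,\dots,\psi_s)\bigr),\qquad j=2,\dots,m .
\]
An induction on $j$, using the mapping property of $\mathcal N_s$ above together with boundedness of the trace operator and of the order-$m_j$ operator $B_j$ on H\"older spaces, shows that each summand sends $\prod_{i\le s}C^{\theta-m_i}(\partial\Omega)$ into $C^{\theta-m_s}(\partial\Omega)$, then into $C^{\theta}(\o)$, then into $C^{\theta-m_j}(\partial\Omega)$, matching $\psi_j$; hence $\mathcal M_s\in L(\prod_{i=1}^sC^{\theta-m_i}(\partial\Omega),C^{\theta-m_s}(\partial\Omega))$ for $\theta\in[m_s,2m+\a]$. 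Setting $\mathcal N(\psi_1,\dots,\psi_m):=\sum_{s=1}^m\mathcal N_s\mathcal M_s(\psi_1,\dots,\psi_s)$ and taking $\theta=2m+\theta'$ with $\theta'\in[0,\a]$ (so that $0\le m_s<2m\le\theta$ puts all intermediate exponents in the admissible ranges) gives \eqref{mathcal N1}. The trace identity then follows from the triangular structure, since
\[
B_j\mathcal N(\psi)=\sum_{s\le j}B_j\mathcal N_s\mathcal M_s(\psi_1,\dots,\psi_s)=\mathcal M_j(\psi_1,\dots,\psi_j)+\sum_{s<j}B_j\mathcal N_s\mathcal M_s(\psi_1,\dots,\psi_s)=\psi_j ,
\]
using $B_j\mathcal N_s=0$ for $s>j$, $B_j\mathcal N_j=\mathrm{id}$, and the defining recursion for $\mathcal M_j$.

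\emph{Main obstacle.} I expect the delicate part to be the construction and regularity analysis of $\mathcal N_s$: one must check that the only loss of smoothness comes from $\partial\Omega\in C^{2m+\a}$ (through the boundary coordinates and the distance function), that the gain of exactly $m_s$ derivatives together with the vanishing \eqref{N_s} persists up to the borderline exponent $2m+\a$, and that the normality condition is genuinely what makes $c_s$ invertible. Once $\mathcal N_s$ is in hand, the remaining steps are the bookkeeping with the triangular identity described above.
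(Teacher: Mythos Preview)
The paper itself does not prove this statement: it simply cites \cite[Theorem~6.3]{lunardisinestrariwah}. Your outline is in the right spirit --- the triangular ansatz, the recursive definition of the $\mathcal M_s$, and the use of the vanishing property \eqref{N_s} to kill the lower-order $B_j$ are exactly the architecture of that reference, and your verification of $B_j\mathcal N(\psi)=\psi_j$ from the triangular identities is correct.

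There is, however, a genuine gap in your construction of $\mathcal N_s$. With a constant-along-normals extension $E$, your formula collapses (up to harmless factors) to $(\mathcal N_s u)(x(\sigma,d))=C\,\chi(d)\,d^{m_s}\,u(\sigma)/c_s(\sigma)$. This function vanishes to order $m_s$ at $\partial\Omega$ and is smooth in the normal variable, but its \emph{tangential} regularity is still only $C^r$, so it does \emph{not} lie in the isotropic space $C^{r+m_s}(\overline\Omega)$. Concretely, in the half-plane with $m_s=1$ and $u(x_1)=|x_1|^{1/2}\in C^{1/2}$, the function $x_2\,|x_1|^{1/2}$ is not even $C^1$, since $\partial_{x_1}$ blows up along $\{x_1=0,\ x_2>0\}$. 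The sentence ``raises the normal regularity by $m_s$ while preserving the tangential regularity, giving $\mathcal N_s\in L(C^r,C^{r+m_s})$'' is precisely the fallacy: isotropic $C^{r+m_s}$ demands $r+m_s$ derivatives in the tangential directions as well.

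The construction in \cite{lunardisinestrariwah} repairs this by inserting a tangential mollification whose scale is tied to the normal distance. In flattened coordinates one takes an operator of the type
\[
(\mathcal N_s u)(x',x_n)\;=\;\frac{(-x_n)^{m_s}}{m_s!}\int_{\mathbb R^{n-1}}\rho(y')\,u(x'-x_n y')\,\mathrm dy',\qquad \rho\in C^\infty_c(\mathbb R^{n-1}),\ \int\rho=1.
\]
Averaging over a tangential ball of radius $\sim x_n$ lets one convert each tangential derivative into a factor $x_n^{-1}$, which is then absorbed by the prefactor $x_n^{m_s}$; this is what produces the genuine gain of $m_s$ isotropic derivatives together with the vanishing \eqref{N_s}. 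Once $\mathcal N_s$ is corrected in this way, the rest of your argument (the recursion for $\mathcal M_s$ and the trace computation) goes through unchanged.
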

\begin{proof}
The proof is given in  \cite[Theorem~6.3]{lunardisinestrariwah}.
\end{proof}
\begin{theo}\label{asymptotic behavior theorem}
Let $0< \omega <-\mathrm{max}\{ \, \mathrm{Re} \,\lambda :\lambda \in \sigma^-(-A_0)\}$. Suppose $f$ and $g$ are such that $(\sigma,t)\mapsto e^{\omega t}f(\sigma, t)\in \mE_0(\infty)$ and  $(\sigma , t)\mapsto e^{\omega t}g(\sigma, t) \in \mF(\infty)$. Suppose further that $u_0 \in C^{2m+\a}(\o)$ satisfy the compatibility condition \eqref{compatibility linear}. Let  $  u$ be  the solution of \eqref{linear single}.
Then $v(\sigma , t):= e^{\omega t}u(\sigma, t)$ is bounded in $[0,+\infty)\times \o $ if and only if 
\begin{align}\label{condition on u_0}
(I-P^-)u_0=&-\int_0^{+\infty} \! e^{-sL}(I-P^-)[f(\cdot,s)+\mathcal{L}\mathcal{N}
g(\cdot,s)]\,\mathrm{d}s \nonumber\\
           &+L\int_0^ \infty \! e^{-sL} (I-P^-)\mathcal{N}g(\cdot, s)\, \mathrm{d}s\,. \end{align}
If this is so, the function $u$ is given by 
\begin{align}
u(\cdot,t) &= e^{tL} P^- u_0+ \int_0^t \! e^ {(t-s)L} P^-[f(\cdot, s)+\mathcal{L}\mathcal{N}g(\cdot,s)] \, \mathrm{d} s \nonumber \\ 
           &\quad -L \int_0^t \! e^{(t-s)L}P^-\mathcal{N}g(\cdot,s)\, \mathrm{d} s \nonumber\\ 
           &\quad -\int_t^{+ \infty }\! e^{(t-s)L}(I-P^-)[f(\cdot,s)+\mathcal{L} \mathcal{N} g(\cdot,s)]\, \mathrm{d}s  \nonumber\\
           &\quad +L \int_t^{+\infty} \! e^{(t-s)L}(I-P^-)\mathcal{N}g(\cdot,s)\,                   \mathrm{d} s,\label{expression for u}        
\end{align}
and the function $v=e^{\omega t}u$ belongs to $\mE_1(\infty)$, with the estimate
\begin{align} \label{estimate in terms of the datas}
\|& v \|_{\mE_1(\infty)} \leq C( \| u_0 \|_{C^{2m+\a }(\o)} + \| e^{ \omega t} f \|_{\mE_0(\infty)}+\|e^{ \omega t}g\|_{\mF(\infty)})   
\end{align}
for some $c > 0$ independent of $(u_0, f, g)$.
\end{theo}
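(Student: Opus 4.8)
The plan is to combine the Balakrishnan-type representation formula \eqref{BF 1}--\eqref{Balakrishnan formula} --- available on every finite interval $[0,T]$ by Theorem \ref{known result} together with the construction of \cite[Section~7]{lunardisinestrariwah}, $\mathcal N$ being the lifting operator of Theorem \ref{scalar result theorem} --- with the exponential dichotomy coming from the spectral splitting of $\sigma(-A_0)$ into $\sigma^-(-A_0)$ and $\sigma(-A_0)\setminus\sigma^-(-A_0)$. By Remark \ref{isolated} the second piece is a finite set of isolated eigenvalues, and by the choice of $\omega$ we have $\sigma^-(-A_0)\subset\{\mathrm{Re}\,\lambda<-\omega\}$ while $\sigma(-A_0)\setminus\sigma^-(-A_0)\subset\{\mathrm{Re}\,\lambda\ge0\}$. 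Writing $L=-A_0$, $P^-=P^s$ and $n(t)=\mathcal N(g_1(t),\dots,g_m(t))$, I first record the preliminary facts used throughout: $P^-$ commutes with $L$ and with $e^{tL}$; by a scalar analogue of Lemma \ref{projection} it maps each parabolic H\"older space continuously into itself, so $P^-$ and $I-P^-$ preserve $\mathbb{E}_0(a)$ and $\mathbb{E}_1(a)$; the subspace $X^s=R(P^-)$ is $L$-invariant with $L|_{X^s}$ sectorial and $\sigma(L|_{X^s})\subset\{\mathrm{Re}<-\omega\}$, hence $\|e^{tL}P^-\|_{\mathcal L(X)}\le Ce^{-\omega't}$ with $\omega'>\omega$, together with the corresponding smoothing bounds on the interpolation spaces; and the unbounded term $L\int_0^te^{(t-s)L}n(s)\,\mathrm{d}s$ is given the standard meaning through $L\int_0^te^{(t-s)L}n(s)\,\mathrm{d}s=L\int_0^te^{(t-s)L}(n(s)-n(t))\,\mathrm{d}s+(e^{tL}-I)n(t)$, legitimate since $s\mapsto n(s)$ is H\"older in $t$ (here the splitting $n=n_1+n_2$ of \eqref{BF 1} keeps track of the fact that $g_j$ carries a time derivative only when $m_j=0$) and $\|Le^{\tau L}\|\le C\tau^{-1}$.

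Next I apply $I-P^-$ to the representation formula and isolate the finite-dimensional component $z:=(I-P^-)u$, which reads $z(t)=e^{tL}z(0)+\int_0^te^{(t-s)L}(I-P^-)[f(s)+\mathcal{L}\mathcal{N}g(s)]\,\mathrm{d}s-L\int_0^te^{(t-s)L}(I-P^-)\mathcal{N}g(s)\,\mathrm{d}s$; on $X^c:=R(I-P^-)$ the operator $L|_{X^c}$ is bounded and $\sigma(L|_{X^c})\subset\{\mathrm{Re}\ge0\}$, so $\|e^{-\tau L}(I-P^-)\|$ grows at most polynomially in $\tau$. Since $e^{\omega s}\big(f(s)+\mathcal{L}\mathcal{N}g(s)\big)$ and $e^{\omega s}\mathcal{N}g(s)$ are bounded, the integrals $\int_0^{\infty}e^{-sL}(I-P^-)[\,\cdot\,]\,\mathrm{d}s$ converge absolutely, and replacing $\int_0^t$ by $-\int_t^{\infty}$ in both terms produces a particular solution $z^*$ with $\|z^*(t)\|\le Ce^{-\omega t}$; one checks $z-z^*=e^{tL}\eta$ with $\eta=z(0)-z^*(0)\in X^c$. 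Because $\sigma(L|_{X^c}+\omega)\subset\{\mathrm{Re}\ge\omega>0\}$ and $X^c$ is finite dimensional, $t\mapsto e^{\omega t}z(t)$ is bounded if and only if $\eta=0$; and $\eta=0$ means exactly $z(0)=z^*(0)$, which written out is condition \eqref{condition on u_0}. This settles the ``only if'' part directly, and under \eqref{condition on u_0} forces $z=z^*$, i.e. the backward-integral expression.

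It remains to assemble $u=P^-u+(I-P^-)u$. For the stable component I project the forward representation formula by $P^-$: since $\sigma(L|_{X^s}+\omega)$ still lies in the open left half-plane, $L|_{X^s}+\omega$ generates a bounded analytic semigroup with negative spectral bound, so it has maximal H\"older regularity on the half-line $(0,\infty)$ --- which one may derive by patching the finite-interval estimate of Theorem \ref{known result} with the exponential decay of $e^{tL}P^-$ --- while the non-convolution terms ($e^{tL}P^-u_0$ and the boundary-lift contributions $n_1(t)-\int_0^t\cdots$, $n_2(\cdot,0)-L\int_0^t\cdots$) are estimated directly from the same decay bounds and the rewriting of the $L\int$-term; this gives $\|e^{\omega t}P^-u\|_{\mathbb{E}_1(\infty)}\le C\big(\|u_0\|_{C^{2m+\alpha}(\overline{\Omega})}+\|e^{\omega t}f\|_{\mathbb{E}_0(\infty)}+\|e^{\omega t}g\|_{\mathbb{F}(\infty)}\big)$. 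For the unstable component $z=z^*$, boundedness in the weighted $\mathbb{E}_1(\infty)$-norm is immediate because $X^c$ is finite dimensional and the weighted datum entering its defining ODE lies in $\mathbb{E}_0(\infty)$, with a bound of the same form. Adding the two contributions yields the explicit formula \eqref{expression for u} (after reorganising the $\mathcal{L}\mathcal{N}g$ and $-L\mathcal{N}g$ terms) together with the estimate \eqref{estimate in terms of the datas}, and in particular $v=e^{\omega t}u\in\mathbb{E}_1(\infty)$.

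The main obstacle is precisely the uniform-in-time estimate \eqref{estimate in terms of the datas}: the representation-formula estimates must be carried out with constants independent of $T$ before passing to $T\to\infty$, which forces one to (i) handle the genuinely unbounded terms $L\int e^{(t-s)L}n(s)\,\mathrm{d}s$ through the rewriting above and through the $n=n_1+n_2$ decomposition, tracking the limited time-regularity of $g_j$ when $m_j>0$; (ii) establish weighted maximal H\"older regularity for the stable part $L|_{X^s}$ on the whole half-line $(0,\infty)$ rather than on finite intervals; and (iii) verify that the glued function of \eqref{expression for u} genuinely solves \eqref{linear single}, i.e. that the initial values $n(\cdot,0)$, $n_1'(\cdot,0)$ appearing in \eqref{BF 1} are reproduced correctly. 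By contrast the dichotomy itself, once (i)--(iii) are in place, is the elementary finite-dimensional argument sketched above, and the ``only if'' direction is essentially free.
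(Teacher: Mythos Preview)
Your proposal is correct and follows essentially the same architecture as the paper: start from the Balakrishnan representation \eqref{BF 1}--\eqref{Balakrishnan formula}, split via the spectral projection $P^-$, derive \eqref{condition on u_0} from the finite-dimensional dichotomy on $X^c=R(I-P^-)$, and then establish the weighted $\mE_1(\infty)$ estimate. The ``only if'' direction and the formula \eqref{expression for u} are handled exactly as the paper does.

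The one place where the paper's implementation differs from your sketch is the $\mE_1(\infty)$ estimate. You propose to treat $P^-u$ by invoking weighted maximal H\"older regularity for $L|_{X^s}+\omega$ on the half-line, obtained by ``patching'' finite-interval estimates with semigroup decay. The paper instead separates $[0,1]$ (handled directly by Proposition~\ref{M-R}) from $[1,\infty)$, and on $[1,\infty)$ decomposes $v=e^{\omega t}u$ into five explicit pieces $v_1,\dots,v_5$: three pieces carrying $P^-$ that reproduce the structure of \eqref{BF 1} with $\widetilde L=L+\omega I$, and two pieces carrying $I-P^-$ coming from the backward integrals. Each $v_i$ is then estimated by a specific result from Lunardi's book (Propositions~4.4.1, 4.4.2 and Theorem~4.3.16), with the $n=n_1+n_2$ splitting entering precisely in $v_2$ and $v_3$ to match the limited time regularity of $g_j$ when $m_j>0$---your obstacle (i). The final passage from \eqref{good estimate} to $v\in B([1,\infty);C^{2m+\alpha}(\o))$ uses the elliptic Schauder estimate of Theorem~\ref{known result}(iv), not semigroup smoothing. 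So your obstacles (i)--(iii) are real and identified correctly; the paper resolves them not by an abstract half-line maximal regularity statement but by this concrete five-term decomposition together with elliptic regularity at the end.
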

\begin{proof}
The proof follows the arguments of \cite[Theorem~0.1]{BraunerHulshofLunardi}. The novelty with respect to \cite{BraunerHulshofLunardi} is the appearance of systems of $m$ boundary conditions (including possibly zeroth-order boundary conditions) which  is treated   with the method introduced in \cite[Section~7]{lunardisinestrariwah}. 

Taking into account the estimates (see \cite[Proposition 2.3.3]{Lunardi1995424}) (which hold for small $\epsilon > 0$ and for $t>0$)
\begin{align*}
\|P^- e^{tL}\|_{L(X)}      & \leq Ce^{-(\omega +\epsilon)t}\,,\\
\|LP^- e^{tL}\|_{L(X)}     & \leq \frac{C e^{-(\omega +\epsilon)t}}{t} \,,\\
\|e^{-tL}(I-P^-) \|_{L(X)} & \leq Ce^{- (\omega-\epsilon)t}\,,
\end{align*} 
and arguing as in \cite{Lunardi1995424}, one can easily verify that the function given by the right hand side of \eqref{expression for u} is bounded by $Ce^{-\omega t}$.

In view of  \eqref{Balakrishnan formula}, we have $u=u_1+u_2$, where $u_1$ is the function on the right hand side of \eqref{expression for u} and
\begin{align*}
u_2(\cdot,t)&= e^{tL}\Bigg( (I-P^-)u_0 + \int_0^ \infty \! e^{-sL}(I-P^-)(f(\cdot, s)+\mathcal{L}\mathcal{N}
g(\cdot,s))\,\mathrm{d} s \\
& \qquad-L\int_0^\infty \! e^{-sL}(I-P^-)\mathcal{N}
g(\cdot,s)\,\mathrm{d}s \Bigg)\\
& \equiv e^{tL}y\,, \quad t\geq 0\,.
\end{align*}
From our assumption on $\omega$,  it follows that 
\begin{equation} \label{choice of  omega}
\sigma(L + \omega I )  \cap i\mR = \emptyset
\end{equation}
 and the projection $(I-P^{-})$ is the spectral projection associated to the unstable part of $\sigma(L+\omega I)$. Therefore due to     $$e^{\omega t}u_2(\cdot,t)=e^{t(L+\omega I)}y\,$$
and the fact that $ y$ is an element of $(I-P^-)(X),$     $e^{\omega t}u_2(\cdot,t)$ is bounded in $[0,\infty)$ with values in $X$ (i.e., $v$ is bounded) if and only if $y=0,$  i.e., iff \eqref{condition on u_0} holds.

We now prove that $v=e^{\omega t}u \in \mE_1(\infty)$. First note that $v$ solves \eqref{linear single}  with $\mathcal{L}$ replacing $\tilde {\mathcal{L}}=\mathcal{L} +\omega I$, and $f$ and $g$ replacing $\widetilde f=f e^{\omega t}$ and $\widetilde g=g e^{\omega t}$ respectively. Due to  the regularity of the data and the  compatibility condition \eqref{compatibility linear}, by Proposition \ref{M-R}, $v$ belongs to  $\mE_1(1)=C^{2m+\a,1+\frac{\a}{2m}}(\o \times [0,1]) $ and 
 $$
 \norm{v}_{\mE_1(a)}\le C \big(\abs{u_0}_1+\|\widetilde f \|_{\mE_0(\infty)}+\| \widetilde g \|_{\mF(\infty)})\,.
$$   
Hence it remains to show that $v \in C^{2m+\a, 1+\frac{\a}{2m}}(\o \times [1,\infty))$. 
As a result of   \eqref{choice of omega}, we have the following estimates for some $\gamma>0$:
\begin{equation} \label{projection estimates}
\begin{aligned}
\| \widetilde L^k e^{t\widetilde L}P^- \|_{L(X)} &\leq \frac{C_k e^{-\gamma         t}}{t^k}\,,  & &t > 0 \,, \\
\|\widetilde L^k  e^{-t\widetilde L}(I-P^-) \|_{L(X)}&\leq C_k e^{-\gamma         t},          & &t > 0 \,, \quad k\in \mathbb{N}\,.
\end{aligned}
\end{equation}
Let us define 
\begin{equation}
\widetilde n(t):= \mathcal{N}(\widetilde g_1(t), \dots, \widetilde g_m(t))=\sum\limits_{s=1}^m \mathcal{N}_s \mathcal{M}_s (\widetilde g_1(t), \dots, \widetilde g_s(t))  \end{equation}
 and
\begin{equation}
   \widetilde n_1(t):=
        \begin{cases}
           0& \text{ if } m_1>0\\
           \mathcal{N}_1 \mathcal{M}_1(\widetilde g_1) & \text{ if } m_1=0
        \end{cases} 
   \quad \text{ and } \quad \widetilde n_2(t):=\widetilde n(t)-\widetilde         n_1(t)\,.
\end{equation}
By decomposing $v$ as $v=P^- v+(I-P^-)v$, using the  equality \eqref{BF 1} for the term $P^- v$, the equality \eqref{Balakrishnan formula} for the term $(I-P^- )v$ and taking into account \eqref{condition on u_0}, we can split $v(t)=v(\cdot,t)$ as $v=\sum_{i=1}^5 v_i$, where
\begin{align*}
v_1(t) &=  e^{t \widetilde L }P^-( u_0-\widetilde n(0))+ \int_0^t \! e^{(t-s)\widetilde{L}}            P^- [\widetilde f (s)+ \widetilde {\mathcal{L}} \widetilde n(s)-\widetilde n_1'(0)]            \,\mathrm{d}s\,,\\
v_2(t) &=  P^- \widetilde n_1(t) - \int_0^t \! e^{(t-s)\widetilde{L}}P^-(\widetilde n_1'(s)-\widetilde n_1'(0))            \,\mathrm{d} s\,,\\
v_3(t) &=  P^- \widetilde n_2(0)-\widetilde{L} \int_0^t \! e^{(t-s)\widetilde{L}}
           P^- (\widetilde n_2(s)-\widetilde n_2(0))\, \mathrm{d}s\,,\\ 
v_4(t) &=  -\int_t^\infty \! e^{(t-s)\widetilde{L}}(I-P^-)[\widetilde f(s) +            \widetilde{\mathcal L} \widetilde n(s)]\,\mathrm{d} s \,,\\
v_5(t) &=  \widetilde L \int_t^\infty \! e^{(t-s)\widetilde L } (I-P^-)
\widetilde n(s)            \,\mathrm{d} s\,.                       
\end{align*}

Furthermore, we need the following facts about the regularity of  $\widetilde n$, which are proven in \cite{lunardisinestrariwah}, see (5),(11)-(13) of §7 in this paper:
\begin{align*}
\begin{cases}
\widetilde n &\in B([0,\infty);\, C^{2m+\a}(\o))\cap C^{\frac{\a}{2m}}([0,\infty);\,C^{2m}(\o))\,,\\
\widetilde{\mathcal L}\widetilde n &\in B([0, \infty);\, C^{\a}(\o)) \cap C^{\frac{\a}{2m}}([0,\infty);X)\,,\\
\widetilde n_1 &\in B([0,\infty);\, C^{2m+\a}(\o)) \,, \\
\widetilde{\mathcal L}\widetilde n_1 &\in B([0, \infty);\, C^{\a}(\o)) \,,\\
\widetilde n_1' &\in C^{\frac{\a}{2m}}([0,\infty);X)\cap B([0, \infty); C^{\a}(\o))\,.
\end{cases}
\end{align*}

Let us first consider $v_1$. Since $t \rightarrow \widetilde f (\cdot, t)$, $t \rightarrow \widetilde{\mathcal L } \widetilde n(s)$ and $\widetilde n_1'(0)$ belong to 
$C^{\frac{\a}{2m}}([0,\infty);X)$ 
by \cite[Proposition 4.4.1(ii)]{Lunardi1995424}, we have 
\begin{align} \label{first v_1}
\begin{cases}
v_1\in C^{1+\frac{\a}{2m}}([1,\infty);X),\vspace{5pt}\\
v_1(t) \in D( A_0) \subseteq \bigcap\limits_{p>1} W^{2m,p}(\Omega) \qquad t \in [1, \infty) \,, \\
v_1' \in B([1,\infty);C^{\a}(\o)),
\end{cases}
\end{align}
and 
\begin{align}\label{v_1}
\begin{cases}
v_1'(t)=\widetilde{L} v_1(t)+ P^-[\widetilde f(t)+ \widetilde{\mathcal L } \widetilde n(t)-\widetilde n_1'(0)]\,,\quad t\geq 0\,,\\
v_1(0)= P^-(u_0-\widetilde n(0))\,,
 \end{cases}
\end{align}
where we have used the fact that $D_{\widetilde L}(\frac{\a}{2m},\infty)\simeq C^\a(\o)$ (by Theorem \ref{known result} (ii)).
 On the other hand, since $\widetilde f$, $\widetilde{\mathcal L } \widetilde n$, $\widetilde n_1'(0)$ and $v_1'$ belong to $B([1, \infty); C^{\a}(\o))$, by \eqref{v_1} we conclude that $\widetilde{\mathcal L }v_1 \in B([1, \infty); C^{\a}(\o))$.

Summing up we obtain 
\begin{align}\label{property}
\begin{cases}
v_1\in C^{1+\frac{\a}{2m}}([1,\infty);X),\qquad v_1' \in B([1,\infty);C^{\a}(\o)),\vspace{10pt}\\
 \widetilde{\mathcal L} v_{1} \in B([1,\infty);C^\a(\o)), \qquad v_{1}(t) \in \bigcap\limits_{p>1} W^{2m,p}(\Omega), \quad t \in [1,\infty)\,.
\end{cases}
\end{align}

Considering $v_2$,   since $ n_1'(t)-n_1'(0) \in C^{\frac{\a}{2m}}([0,\infty);X)$, by \cite[Proposition 4.4.1(ii)]{Lunardi1995424} $v_2$ satisfies the same properties as $v_1$ stated in \eqref{first v_1}  and $v_2(t)=P^- \widetilde n_1(t)+y(t)$,
where $ y(t) $ is a classical solution of 
\begin{align}\label{v_2}
\begin{cases}
y'(t)=\widetilde{\mathcal L} y(t)-P^-[n_1'(t)-\widetilde n_1'(0)],\quad t\geq 0\,,\\
y(0)= 0\,, \\
B_j y(t)=0,\quad j=1,\dots,m,\quad t\geq 0\,. 
\end{cases}
\end{align}
Moreover, since    $\widetilde{\mathcal L } \widetilde n_1(t),n_{1}'(t) \in B([1, \infty); C^{\a}(\o)) $, we obtain similarly that $v_2$ satisfies the same properties as $v_1$ (see \eqref{property}).

Let us consider $v_3$. We set
for each 
 \begin{equation*}
 \begin{cases}
 s=1,\dots , m & \text{ if } m_1=0\,,\\
 s=2,\dots, m  & \text{ if } m_1>0\,, 
 \end{cases}
 \end{equation*}
\begin{equation*}
\psi_s(t)=P^-\mathcal N_s \mathcal M_s (\widetilde g_1(t)- \widetilde g_1(0),\dots,  \widetilde g_s(t)- \widetilde g_s(0)),\quad t\in [0,\infty)\,,
\end{equation*}
and
$$
v_{3s}(t) = \int_0^t \! e^{(t-s')\widetilde{L}}
           P^- \psi_s(s')\, \mathrm{d}s' \,.
$$  
Therefore 
\begin{equation} \label{v_3 sum}
v_3(t)= P^- n_2(0)-\widetilde L\sum\limits_{s=1 \text{ or }2}^m v_{3s}(t) \,.
\end{equation}

We have 
\begin{equation}\label{Psi_s}
 \psi_s \in C^{\frac{2m+\a-m_s}{2m}}([0,\infty);\,\, D_{\widetilde L}(\tfrac{m_s}{2m}, \infty))
 \end{equation}
  because of the fact that $B_j \mathcal N_s =0$ for $j<s$. See (32) of $\S 7 $ in \cite{lunardisinestrariwah} for more details. Applying \cite[Theorem~4.3.16]{Lunardi1995424} with $\theta =\frac{2m+\a-m_s}{2m} $, $\beta =\frac{m_s}{2m} $, we obtain for every T>0
\begin{align*}
\widetilde L v_{3s}\in C^{1+\frac{\a}{2m}}([0,T];X),\qquad v_{3s}' \in B([0,T];D_{\widetilde L}(1+\tfrac{\a}{2m}, \infty)).\vspace{10pt}
\end{align*}
By looking at the proof of Theorem 4.3.16  and Theorem 4.3.1(iii) in \cite{Lunardi1995424},   we see that 
$$
\|\widetilde L v_{3s}\|_{C^{1+\frac{\a}{2m}}([0,T];X)}+\|\widetilde L v_{3s}'\|_{B([0,T];D_{\widetilde L}(\frac{\a}{2m}, \infty))} \leq C \| \mathcal \psi_s \|_{C^{\frac{2m+\a-m_s}{2m}}([0,\infty);\,\, C^{m_s}(\o))}\,,
$$
with  the constant $C$ independent of $T$ and hence by \eqref{v_3 sum}
we get
$$ 
v_3' \in B([1, \infty); C^\a(\o))\cap C^\frac{\a}{2m}([1,\infty);X)
$$
and $v_3(t)=P^- \widetilde n_2(0)-\widetilde L z(t)$, where $z(t)$ is a classical solution of 
\begin{align}\label{v_3}
\begin{cases}
z'(t)=\widetilde{L} z(t)+P^-[\widetilde n_2(t)-\widetilde n_2(0)],\quad t\geq 0\,,\\
z(0)= 0\,. 
\end{cases}
\end{align}
Moreover by \eqref{v_3} we easily check that $$v_3'=\widetilde {\mathcal L} v_3-\widetilde {\mathcal L}P^- n_2 $$
and therefore $\widetilde {\mathcal L} v_3 \in B([1, \infty); C^\a(\o))$. Summing up we obtain  that $v_3$ satisfies the same properties as $v_1$ (see \eqref{property}).

We now consider $v_4$. Since again $t \rightarrow \widetilde f (\cdot, t)$, $t \rightarrow \widetilde{\mathcal L } \widetilde n(s)$  belong to 
$C^{\frac{\a}{2m}}([0,\infty);X)$, 
by \cite[Proposition 4.4.2(ii)]{Lunardi1995424} we  obtain  that  it satisfies the same properties as $v_1$ (see \eqref{property}).

Finally we consider $v_5$. Due to  the estimates \eqref{projection estimates}, $v_5$ is clearly bounded with values in $D(L^k)$ for every $k \in \mathbb{N} $. Moreover, Because  $L(I-P^{-})\widetilde n \in B([1, \infty);C^{\a}(\o))$,  $v_5'=\widetilde L v_5 - \widetilde L (I-P^-)\widetilde n$ is Hölder continuous  with exponent $ \frac{\a}{2m}$ with value in $X$ and is bounded with value in $C^{\a}(\o)$.  Hence $v_5$ satisfies  the same properties as $v_1$ (see \eqref{property}).

Since $v=\sum_{i=1}^5 v_i$, we have
\begin{align}\label{good estimate}
\begin{cases}
v\in C^{1+\frac{\a}{2m}}([1,\infty);X),\qquad v' \in B([1,\infty);C^{\a}(\o)),\vspace{10pt}\\
\widetilde{\mathcal L} v_{} \in B([1,\infty);C^\a(\o)), \qquad v(t) \in \bigcap\limits_{p>1} W^{2m,p}(\Omega), \quad t \in [1,\infty)\,.
\end{cases}
\end{align}
Now what is left is  to prove that 
$$
v \in B([1,\infty); \, C^{2m+\a}(\o))
$$
and this can be done by using (iv) of  Theorem \ref{known result}, by means of  \eqref{good estimate} and the fact that $B_j v=\widetilde g_j \in  B([1,\infty);C^{2m+\a-m_j}(\partial \Omega))$.
\vspace{6 pt}

It follows that $v \in C^{2m+\a, 1+\frac{\a}{2m}}(\o \times [1,\infty)),  $ and 
 $$
 \| v \|_{C^{2m+\a, 1+\frac{\a}{2m}}(\o \times [1,\infty))} \leq C( \|u_0\|_X + \|\widetilde f \|_{\mE_0(\infty)}+\| \widetilde g \|_{\mF(\infty)})\,,
 $$
which finishes the proof.
\end{proof}  

\subsection{An extension operator}\label{Sec. extension operator}
 In order to apply the semigroup theory, similarly as in the previous section,  to  obtain results for the asymptotic behavior of linear systems (see next section), we need to  construct explicitly an extension operator for the case of vector-valued unknowns.

Let us recall our  linear boundary problem:
\begin{equation}\label{appendix BCs}
 (B_ju)(x)=\sum_{|\beta|\leq m_{j}}b^{j}_\beta(x)\nabla^{\beta}u (x)\,,\quad x\in \partial \Omega,\quad j=1,\dots,mN \,. 
\end{equation}
Here $ u : \o \times [0, \infty) \rightarrow \mR^N$, $b^j_\beta$ are $N$-dimensional row-vectors and 
\begin{equation*}
0 \leq m_1 \leq m_2 \leq \cdots \leq m_{mN} \leq 2m-1 \,.
\end{equation*}
Our goal is to construct explicitly  a linear and bounded operator $E$ such that for all $\theta' \in [0, \a] $,    
\begin{equation}\label{extension operator}
\left \{
 \begin{aligned}
    &g_j \in C^{2m+\theta'-m_j}(\partial \Omega), \quad j=1,\dots,mN \implies     E(g_1, \dots, g_{mN}) \in C^{2m+\theta'}(\o),\\[3pt]
    &B_jE(g_1,\dots,g_{mN})=g_j,\quad j=1,\dots,mN\,.
 \end{aligned}
\right.
\end{equation}
Note that the case $N = 1$ is treated in \cite[Theorem~6.3]{lunardisinestrariwah}, i.e., Theorem \ref{scalar result theorem}.

The strategy for proving the existence of the extension operator $E$ satisfying \eqref{extension operator} is as follows: At first, by using the  normality condition \eqref{normality condition}, we will reduce our linear system to an uncoupled linear system and then with the help of the scalar result, i.e., Theorem \ref{scalar result theorem}, applying it to each component, we finish the proof. 

In the following, we set $\gamma_j $ for the $j^{\mbox{th}}$-order normal derivatives precisely, for $j=0,\dots, 2m-1$
\begin{equation*} 
\gamma_j u :=D^j u\overbrace{[\nu,\cdots,\nu]}^{j-\text{times}} |_{\partial \Omega}\,,
\end{equation*}
which should be understood component-wise. Remind that $\nu(x)$ is the unit outer normal to $\partial \Omega $ at the point $x$ and $n_k \geq 0$ are the number of $k^{\mbox{th}}$-order boundary conditions for $k=0,\dots, 2m-1$.

\begin{theo}\label{theorem extension}
 Assume the operators $B_j$ satisfy  the regularity condition (H2) and the normality condition \eqref{normality condition}. Then there exists a linear bounded operator $E$ satisfying \eqref{extension operator}. 
\end{theo}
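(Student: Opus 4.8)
The plan is to reduce the vector-valued boundary system to a collection of scalar problems by ``diagonalizing'' the boundary conditions order by order with the help of the normality condition \eqref{normality condition}, and then invoke Theorem \ref{scalar result theorem} componentwise. First I would organize the boundary operators by their orders: for each $k\in\{0,\dots,2m-1\}$ let $\{j_i:i=1,\dots,n_k\}=\{j:m_j=k\}$ be the indices of the $k$th-order conditions. Writing $B_j u=\sum_{|\beta|\le m_j}b^j_\beta(x)\nabla^\beta u(x)$ and splitting each $\nabla^\beta$ into its purely normal part $\gamma_k u=D^k u[\nu,\dots,\nu]|_{\partial\Omega}$ plus lower-order (tangential-mixed) terms, the principal part of the collection of $k$th-order conditions at a point $x\in\partial\Omega$ is exactly the matrix
\[
M_k(x):=\begin{pmatrix}\sum_{|\beta|=k}b^{j_1}_\beta(x)(\nu(x))^\beta\\ \vdots\\ \sum_{|\beta|=k}b^{j_{n_k}}_\beta(x)(\nu(x))^\beta\end{pmatrix}\in\mathbb R^{n_k\times N},
\]
which by \eqref{normality condition} is surjective and, by (H2), has entries in $C^{2m+\alpha-k}(\partial\Omega)$. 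Hence $M_k(x)$ admits a right inverse $M_k(x)^+$ (e.g.\ $M_k^\top(M_kM_k^\top)^{-1}$) depending on $x$ with the same H\"older regularity, so one can select, smoothly in $x$, an index set realizing a nonsingular $n_k\times n_k$ minor.

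The second step is the actual reduction. I would build the extension inductively on $k=0,1,\dots,2m-1$, at each stage prescribing the normal derivatives $\gamma_0 E,\gamma_1 E,\dots$ so as to satisfy all boundary conditions of order $\le k$ without disturbing those already satisfied. Concretely: after handling orders $<k$, the conditions $B_{j_i}u=g_{j_i}$ ($i=1,\dots,n_k$) become, modulo already-fixed lower normal derivatives, a pointwise linear system $M_k(x)\,\gamma_k u=\tilde g_k(x)$ for the unknown vector $\gamma_k u\in\mathbb R^N$, where $\tilde g_k\in\prod_i C^{2m+\theta'-k}(\partial\Omega)$ is an explicit combination of the $g_{j_i}$, the coefficients $b^{j_i}_\beta$, and $\gamma_0 u,\dots,\gamma_{k-1}u$ (all of class at least $C^{2m+\theta'-k}$). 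Using the right inverse I set $\gamma_k u:=M_k(x)^+\tilde g_k(x)$ on $\partial\Omega$ (the remaining $N-n_k$ degrees of freedom of $\gamma_k u$ can be set to zero). This defines, for each $k=0,\dots,2m-1$, a target function $h_k\in C^{2m+\theta'-k}(\partial\Omega)^N$ with $\gamma_k E=h_k$; the map $(g_1,\dots,g_{mN})\mapsto(h_0,\dots,h_{2m-1})$ is linear and bounded in the relevant H\"older norms because multiplication by $C^{2m+\alpha-k}$ coefficients and differentiation along $\partial\Omega$ are bounded in these scales.

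The final step is to realize a function on $\overline\Omega$ with prescribed normal derivatives of all orders $0,\dots,2m-1$ on the boundary. This is exactly the scalar extension of Theorem \ref{scalar result theorem} (equivalently \cite[Theorem~6.3]{lunardisinestrariwah}), applied to each of the $N$ components of $u$ separately: taking as boundary data the components of $h_0,\dots,h_{2m-1}$ one obtains $\mathcal N$-type operators producing $u^{(\ell)}\in C^{2m+\theta'}(\overline\Omega)$ with $\gamma_k u^{(\ell)}=(h_k)_\ell$ for $k=0,\dots,2m-1$, and with the crucial vanishing property \eqref{N_s} ensuring that prescribing the $k$th normal trace does not corrupt lower ones. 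Setting $E(g_1,\dots,g_{mN}):=(u^{(1)},\dots,u^{(N)})$ and chaining the bounded linear maps gives the desired $E$ with $E(g_1,\dots,g_{mN})\in C^{2m+\theta'}(\overline\Omega)$ for all $\theta'\in[0,\alpha]$ and $B_jE(g_1,\dots,g_{mN})=g_j$.

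The main obstacle I anticipate is bookkeeping the inductive ``modulo lower order'' reduction cleanly: one must verify that, when order-$k$ conditions are rewritten in terms of $\gamma_k u$, the contributions of $\gamma_0 u,\dots,\gamma_{k-1}u$ (and of the full tangential jets of $u$ on $\partial\Omega$, which are themselves determined recursively by the already-prescribed normal traces together with the chart structure of $\partial\Omega$) are indeed already fixed and enter only through bounded operations in the H\"older scale, and that the coefficient regularity $b^{j_i}_\beta\in C^{2m+\alpha-m_j}$ is exactly enough to keep $\tilde g_k$ in $C^{2m+\theta'-k}(\partial\Omega)$. Handling the lower-order ($m_j=0$) conditions and the possible loss of one derivative there requires the same care as in \cite[Section~7]{lunardisinestrariwah}; once the algebraic diagonalization via $M_k(x)^+$ is in place, the rest is a routine composition of the scalar result with bounded multiplication and trace operators.
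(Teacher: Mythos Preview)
Your proposal is correct and follows essentially the same route as the paper: both use the normality condition to produce, at each order $k$, a surjective matrix $M_k=S_{kk}$ with a right inverse $M_k^+=R_{kk}$, solve inductively for the normal traces $\gamma_k u=\psi_k=h_k$ in terms of the data and lower-order traces, and then apply the scalar extension operator of Theorem~\ref{scalar result theorem} componentwise (with the vanishing property~\eqref{N_s}) to realize a vector-valued function with those prescribed normal derivatives. The bookkeeping concern you flag is handled exactly as you anticipate, via the decomposition $\sum_{i=0}^{j} S_{j,i}\gamma_i u=\varphi_j$ with tangential operators $S_{j,i}$ of order at most $j-i$.
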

\begin{proof}
 Without loss of generality we  assume that $n_k \neq 0$ for all $k$ between  $0$ and $2m-1,$ i.e., we have here included all orders  $k$ between $0$ and $2m-1$. Indeed, if $n_k=0 $ for some $k,$ we could simply add the boundary conditions $\gamma_k u=0$.

Let $E$ be defined by 
\begin{align}\label{E}
E(g_1,&\dots, g_{mN}) \\ \nonumber
&:=\Big(\mathcal{N}(\psi_{01},\psi_{11},\dots,\psi_{2m-1,1}), 
\dots,\mathcal{N}(\psi_{0N},\psi_{1N},\dots,\psi_{2m-1,N})\Big),
\end{align}
 where the operator 
 $$ 
 \mathcal{N}(\psi_{0i},\psi_{1i},\dots,\psi_{2m-1,i})=\sum_{s=1}^{2m} \mathcal{N}_s \mathcal{M}_s(\psi_{0i},\dots,\psi_{s-1,i})\,
 $$
 is the extension operator given in Theorem \ref{scalar result theorem} for the boundary operators $B_j=\gamma_{j-1}  $, for $j=1,\dots, 2m$.
More precisely, $u=E(g_1,\dots,g_{mN})$ solves the following uncoupled linear system of normal boundary conditions:
\begin{align}\label{uncoupled}
\begin{cases}
\gamma_0 u=\psi_0 \,, \\
\gamma_1 u=\psi_1 \,,\\
\hspace{1cm}\vdots\\
\gamma_{2m-1} u =\psi_{2m-1} \, ,
\end{cases}
\end{align} 
where
$$
 \psi_k(x)=
  \begin{pmatrix}
     \psi_{k1}(x)\\
     \vdots\\
     \psi_{kN}(x)
  \end{pmatrix}
$$
will be defined below.
Note that by looking at the proof of Theorem \ref{scalar result theorem} or equivalently Theorem 6.3 in \cite{lunardisinestrariwah}, one sees that the number of boundary conditions in Theorem 6.3 in \cite{lunardisinestrariwah} can be replaced  by any $m'$ as far as the normality condition is satisfied and $m_j \leq 2m-1$ for all $j=1,\dots, m'$, which is definitely the   case in our situation. 

Setting $u=E(g_1,\dots,g_{mN})$ in \eqref{extension operator} and decomposing derivatives into  normal  and tangential derivatives,  the last condition in \eqref{extension operator} can be rewritten as
\begin{equation}\label{tangential and normal}
\sum_{i=0}^{j} S_{j,i} \gamma_i u = \varphi_j \,,
\end{equation}
where 
$S_{j,i}$ are tangential differential operator of order at most $j-i$ and
\begin{align*}
  \varphi_0:=
  \begin{pmatrix}
  g_1\\ 
  \vdots \\
  g_{n_0}
  \end{pmatrix}_{n_0 \times 1}\,,\qquad
  \varphi_{k+1}:=
  \begin{pmatrix}
   g_{\sum_{i=0}^{k} n_{i}+1}\\
   \vdots \\
    g_{\sum_{i=0}^{k+1} n_{i}}
   \end{pmatrix}_{n_{k+1}\times 1}\,. 
       \end{align*}
In particular for all $k=0,\dots,2m-1$
\begin{align*}
  S_{kk}(x)=
  \begin{pmatrix}
   \sum_{| \beta |= k}b^{j_1}_\beta(x)(\nu(x))^\beta  \\ 
   \vdots \\
   \sum_{| \beta | =k}b^{j_{n_{k}}}_\beta(x)(\nu(x))^\beta
  \end{pmatrix}_{n_k \times N}  \,,
  \end{align*}
  where $\{ j_{i} : i = 1, \dots, n_k \} = \{ j : m_j = k \} $ and for $j = 0, 1$ in \eqref{tangential and normal} we have  
\begin{align} 
\begin{cases}
S_{00}(x)\gamma_0 u=\varphi_0 \,, \\
 S_{11} (x) \gamma_1 u + \text{ tangential derivatives}+\text{zeroth order normal derivatives} =\varphi_1 \,.
\end{cases}
\end{align}
By the normality condition, $S_{kk}$ are surjective  and therefore  there exist  matrices $R_{kk}$ which have the same regularity as $S_{kk}$ such that 
\begin{align}\label{right inverse}
S_{kk} R_{kk}=I \quad \text{ on } \mR^{n_k}\,.
\end{align}

Now we are in a position to define $\psi_k$ such that \eqref{tangential and normal} holds. 
Define $\psi_0 := R_{00} \varphi_0$. Then  
\begin{equation}
S_{00} \gamma_0 u = S_{00} \psi_0 = S_{00}R_{00} \varphi_0 = \varphi_0 \,,
\end{equation}
that is,   \eqref{tangential and normal} is satisfied for $j =0$.  
Let us now consider  $j = 1$ which corresponds to the first-order boundary conditions.  Using the fact that $ \gamma _0 u =\psi_0 = R_{00} \varphi_0$ all tangential derivatives and of course all zeroth-order normal derivatives can be calculated in terms of  $R_{00} \varphi_0$.  Consequently the condition \eqref{tangential and normal} for $j= 1$ can be rewritten as
\begin{align*}
S_{11} (x) \gamma_1 u=\varphi_1(x) + \eta_1(x)\,,
\end{align*}
for some $\eta_1(x)$ which can be calculated in terms of  $R_{00} \varphi_0 $ or precisely in terms of $(g_1,\dots ,g_{n_0})$.
Therefore, by  defining $ \psi_1 := R_{11}(\varphi_1 + \eta_1) $  we are done with the case $j=1$.  By iteration, we define
$$
\psi_k := R_{kk}(\varphi_k + \eta_k)
$$
for some $\eta_k$ which can be calculated in terms of $\psi_0,\dots,\psi_{k-1}$.  
Moreover, by \eqref{N_s} for each $v \in C(\partial \Omega)$
\begin{align}
(B_j(\mathcal N_s v_1, \dots,\mathcal N_s v_N))(x)\equiv 0\,, \quad x\in \partial \Omega \,,\quad \text{ } m_j<s-1 \,.
\end{align}
And finally the regularity condition in \eqref{extension operator} comes from the fact that the operator $\mathcal N$ has a similar regularity property, see \eqref{mathcal N1}, and this finishes the proof. 
 \end{proof}

\subsection{Asymptotic behavior in linear systems}
Here we  extend the result of Section \ref{abilp} to the systems of $mN$ boundary conditions  for a linear system.  Precisely we consider the  linear problem \eqref{linear}, 
 i.e., 
\begin{equation}
\left\{\begin{array}{lll}
\partial _t u+Au=f(t)&\mbox{in }\Omega,&t \geq0, \\[0.1cm]
Bu=g(t)&\mbox{on }\partial\Omega,&t \geq 0, \\[0.1cm]
u(0)=u_{0}&\mbox{in }\Omega, \\[0.1cm]
\end{array}\right.
\label{linear system1}
\end{equation}
where
 $ u: \o \times [0,\infty) \rightarrow \mR^N$, $\Omega$ is a bounded domain in $\mathbb{R}^n$ with $C^{2m+\a}$ boundary, $0<\a<1$, $g=(g_1,\dots,g_{mN}), B=(B_1, \dots, B_{mN})$, $u_0 \in C^{2m+\a}(\o)$ and the operators $A$ and $B$ satisfy the conditions (H2), (L-S), (SP) and  the normality condition  \eqref{normality condition}. 

 Theorem \ref{known result} (i) states that the realisation $-A_0$ of $-A$ with homogeneous boundary conditions in $C(\o)$, defined in \eqref{linear operator}, is a sectorial operator.

  Furthermore if $f \in \mathbb E_0(T)$, $g \in \mathbb F(T)$ and $u_0 \in C^{2m+ \a}(\o)$ satisfying the compatibility condition \eqref{compatibility linear}, the unique solution of \eqref{linear system1} belongs to $\mE_1(T)$ for all $T$ and in addition it is given by the extension of the Balakrishnan formula with some adaptations. Indeed, by our explicit construction of the extension operator (see \eqref{E}), we simply can extend  Theorem 4.1 in \cite{lunardisinestrariwah}
to cover  the linear systems (using the same technique). Therefore the following representation formula holds for each $t \in [0,T]$:

\begin{align}\label{Balakrishnan formula systems}
u(\cdot,t)&= e^{tL}(u_0-n(0))+\int_0^t \! e^{(t-s)L}[f(\cdot,s)+\mathcal{L}
 n(s)-n_1'(0)]\mathrm{d}s\nonumber\\
 &\quad +n_1(t)-\int_0^t \! e^{(t-s)L}(n_1'(s)-n_1'(0))\,\mathrm{d} s \nonumber\\
  & \quad-L \int_0^t \!e^{(t-s)L}[n_{2}(s)- n_2(0)] \, \mathrm{d} s +n_2(0)\nonumber\\
&= e^{tL} u_0 + \int_0^t \! e^{(t-s)L}[f(\cdot,s)+\mathcal{L}n(s)]
\, \mathrm{d} s  \nonumber\\
&\quad -L\int_0^t \! e^{(t-s)L}n(s)\,\mathrm d s\,, \quad    
\end{align}
with  $\mathcal{L}=-A$ and $ L=-A_0$. Here 
$$
n(t)= E( g_1(t), \dots, g_{mN}(t))
$$
and similarly as before 
\begin{equation*}
n_1(t)=\begin{cases}
         0& \text{ if } m_1>0 \,,\\
         \left(\mathcal N_1 \mathcal M_1 ( \psi_{0,1}), \dots, \mathcal N_1 \mathcal M_1 (\psi_{0,N})\right) & \text{ if } m_1=0\,
       \end{cases}
\end{equation*}
and $ n_2(t)= n(t)- n_1(t)$,
 where $\psi_0=(\psi_{0,1}, \dots, \psi_{0,N})^T=R_{00} \varphi_0$ which can be written in terms of $g_1, \dots, g_{n_0}$.  

\begin{theo}\label{asymptotic behavior theorem for systems}
Let $0< \omega <- \mathrm{max}\,\{ \mathrm{Re}\, \lambda :\lambda \in \sigma^-(-A_0) \}$. Suppose $f$ and $g$  are such that $(\sigma,t)\rightarrow\ e^{\omega t}f(\sigma, t)\in \mE_0(\infty)$ and $(\sigma , t)\rightarrow e^{\omega t}g(\sigma, t) \in \mF(\infty)$. Suppose further that $u_0 \in C^{2m+\a}(\o)$ satisfy the compatibility condition \eqref{compatibility linear}. Let  $u$ be  the solution of \eqref{linear system1}.
Then $v(\sigma , t)= e^{\omega t}u(\sigma, t)$ is bounded in $[0,+\infty)\times \o $ if and only if 
\begin{align}\label{condition on u_0 system}
(I-P^-)u_0=&-\int_0^{+\infty} \! e^{-sL}(I-P^-)[f(\cdot,s)+\mathcal{L}E
g(\cdot,s)]\,\mathrm{d}s \nonumber\\
           &+L\int_0^ \infty \! e^{-sL} (I-P^-)Eg(\cdot, s)\, \mathrm{d}s\,. \end{align}
In this case, the function $u$ is given by 
\begin{align}
u(\cdot,t)  &= e^{tL} P^- u_0+ \int_0^t \! e^ {(t-s)L} P^-[f(\cdot, s)+\mathcal{L}Eg(\cdot,s)] \, \mathrm{d} s \nonumber \\ 
            &\quad -L \int_0^t \! e^{(t-s)L}P^-Eg(\cdot,s)\, \mathrm{d} s \nonumber\\ 
            &\quad -\int_t^{+ \infty }\! e^{(t-s)L}(I-P^-)[f(\cdot,s)+\mathcal{L} E g(\cdot,s)]\, \mathrm{d}s  \nonumber\\
            &\quad +L \int_t^{+\infty} \! e^{(t-s)L}(I-P^-)Eg(\cdot,s)\,                   \mathrm{d} s,\label{expression for u in systems}        
\end{align}
and the function $v=e^{\omega t}u$ belongs to $\mE_1(\infty)$, with the estimate
\begin{align*}
\|& v \|_{\mE_1(\infty)} \leq C( \| u_0 \|_{C^{2m+\a }(\o)} + \| e^{ \omega t} f \|_{\mE_0(\infty)}+\|e^{ \omega t}g\|_{\mF(\infty)}).  
\end{align*}
\end{theo}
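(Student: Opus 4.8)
The plan is to transcribe the proof of Theorem~\ref{asymptotic behavior theorem} almost verbatim, the only structural change being that the scalar lifting operator $\mathcal{N}$ is everywhere replaced by the vector-valued extension operator $E$ supplied by Theorem~\ref{theorem extension}, and that the representation formula \eqref{Balakrishnan formula systems} takes the place of its scalar counterpart. Concretely, I would first fix the notation $\mathcal{L}=-A$, $L=-A_0$, $\widetilde{\mathcal L}=\mathcal L+\omega I$, $\widetilde L=L+\omega I$, $\widetilde f=e^{\omega t}f$, $\widetilde g=e^{\omega t}g$, and set $n(t)=E(g_1(t),\dots,g_{mN}(t))$ together with $n_1,n_2$ as defined just above \eqref{Balakrishnan formula systems}. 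Using \eqref{Balakrishnan formula systems} I would write $u=u_1+u_2$, where $u_1$ is the function on the right-hand side of \eqref{expression for u in systems} and $u_2(t)=e^{tL}y$, with $y$ equal to the difference between $(I-P^-)u_0$ and the right-hand side of \eqref{condition on u_0 system}.

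The boundedness characterization then proceeds exactly as in \cite{Lunardi1995424}: invoking the semigroup estimates $\|P^-e^{tL}\|_{L(X)}\le Ce^{-(\omega+\epsilon)t}$, $\|LP^-e^{tL}\|_{L(X)}\le Ct^{-1}e^{-(\omega+\epsilon)t}$ and $\|e^{-tL}(I-P^-)\|_{L(X)}\le Ce^{-(\omega-\epsilon)t}$ used already in the proof of Theorem~\ref{asymptotic behavior theorem}, one checks that the right-hand side of \eqref{expression for u in systems} is $O(e^{-\omega t})$, so that $e^{\omega t}u_1$ is bounded on $[0,\infty)$. Since the choice of $\omega$ forces $\sigma(L+\omega I)\cap i\mR=\emptyset$ and $I-P^-$ is the spectral projection onto the unstable part of $\sigma(L+\omega I)$, the term $e^{\omega t}u_2(t)=e^{t(L+\omega I)}y$ with $y\in(I-P^-)(X)$ is bounded on $[0,\infty)$ with values in $X$ if and only if $y=0$, i.e. if and only if \eqref{condition on u_0 system} holds; this gives the stated equivalence.

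For the $\mE_1(\infty)$-regularity of $v=e^{\omega t}u$ I would note that $v$ solves \eqref{linear system1} with $\widetilde{\mathcal L},\widetilde f,\widetilde g$ in place of $\mathcal L,f,g$, so Proposition~\ref{M-R} yields $v\in\mE_1(1)$ with the desired estimate on $[0,1]$, and it remains to control $v$ on $[1,\infty)$. Following the scalar proof, I would decompose $v=P^-v+(I-P^-)v$, apply \eqref{BF 1} to $P^-v$ and \eqref{Balakrishnan formula systems} to $(I-P^-)v$, use \eqref{condition on u_0 system}, and obtain a splitting $v=\sum_{i=1}^5 v_i$ with the five summands defined exactly as in the proof of Theorem~\ref{asymptotic behavior theorem}, now with $\widetilde n=E(\widetilde g_1,\dots,\widetilde g_{mN})$, $\widetilde n_1$, $\widetilde n_2$. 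The analysis of $v_1$ and $v_2$ via \cite[Proposition~4.4.1(ii)]{Lunardi1995424}, of $v_3$ via \cite[Theorem~4.3.16]{Lunardi1995424} applied block by block with $\theta=\frac{2m+\a-m_s}{2m}$, $\beta=\frac{m_s}{2m}$ (using the $T$-uniform bound extracted from its proof), of $v_4$ via \cite[Proposition~4.4.2(ii)]{Lunardi1995424}, and of $v_5$ via the estimates \eqref{projection estimates}, then goes through unchanged and yields the counterpart of \eqref{good estimate} for $v$; finally Theorem~\ref{known result}(iv) applied to $v(t)$, together with $B_jv=\widetilde g_j\in B([1,\infty);C^{2m+\a-m_j}(\partial\Omega))$, upgrades this to $v\in B([1,\infty);C^{2m+\a}(\o))$, hence $v\in\mE_1(\infty)$ with the asserted estimate.

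The one genuinely non-routine point — and the step I expect to be the main obstacle — is to verify that $\widetilde n,\widetilde n_1,\widetilde n_2$ and the functions $\widetilde{\mathcal L}\widetilde n,\widetilde{\mathcal L}\widetilde n_1,\widetilde n_1'$ possess exactly the same time- and space-regularity that the scalar argument exploits (membership in $B([0,\infty);C^{2m+\a}(\o))$, $C^{\frac{\a}{2m}}([0,\infty);X)$, and so on), and that the block analogue of the property $\psi_s\in C^{\frac{2m+\a-m_s}{2m}}([0,\infty);D_{\widetilde L}(\frac{m_s}{2m},\infty))$ holds. This follows, however, from the construction of $E$ in Theorem~\ref{theorem extension}: $E$ is assembled component-wise out of the scalar operators $\mathcal N_s\mathcal M_s$ of Theorem~\ref{scalar result theorem} applied to data $\psi_{ki}$ (see \eqref{E}) which are obtained from $(g_1,\dots,g_{mN})$ by multiplication with the matrices $R_{kk}$ — of class $C^{2m+\a-m_j}$ by (H2), hence preserving spatial Hölder regularity — and by adding correction terms built from the lower-order $\psi$'s, so that the $\widetilde\psi_{ki}$ inherit the Hölder-in-time regularity of $\widetilde g$; while the block version of the membership above rests precisely on the vanishing property \eqref{N_s}, valid for each scalar block. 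Granting these facts, the remainder of the argument is a verbatim repetition of the proof of Theorem~\ref{asymptotic behavior theorem}.
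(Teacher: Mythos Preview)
Your proposal is correct and matches the paper's own proof essentially verbatim: the paper states that the argument is identical to that of Theorem~\ref{asymptotic behavior theorem}, with the single delicate point being the treatment of $v_3$, for which one works component-wise via the explicit construction \eqref{E} of $E$ and the vanishing property \eqref{N_s} to recover the analogue of \eqref{Psi_s}. You have identified precisely this obstacle and resolved it in the same way.
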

\begin{proof}
The proof is exactly the same as the one of Theorem \ref{asymptotic behavior theorem}. More precisely, as you have seen, we  used the abstract theories in the proof, i.e., the theory of semigroups of linear operators, except for the part related to the function $v_3$. Due to   our explicit construction of the extension operator (see \eqref{E}) and taking into account \eqref{N_s} (in order to obtain the same result as   \eqref{Psi_s}), we can work component-wise and get the same estimate for the function $v_3$. This finishes the proof.
\end{proof}
In the stable case, i.e, when $\sigma (-A_0) = \sigma^-(-A_0)$, We immediately get the following corollary of Theorem \ref{asymptotic behavior theorem for systems}.
\begin{cor}\label{corollary asymptotic}
Let $\omega_A:=\sup\{Re\lambda :\lambda \in \sigma(-A_0)\}<0$ and  $\omega \in (0, -\omega_A)$. Assume $f$ and $g$ are such that $(\sigma,t)\rightarrow\ e^{\omega t}f(\sigma, t)\in \mE_0(\infty)$ and $(\sigma , t)\rightarrow e^{\omega t}g(\sigma, t) \in \mF(\infty)$ and let $u_0 \in C^{2m+\a}(\o)$ satisfy the compatibility condition \eqref{compatibility linear}.
Let  $u$ be  the solution of \eqref{linear system1}, where $u \in \mE_1(T) $ for all $T< \infty$. Then $v(\sigma , t)= e^{\omega t}u(\sigma, t)$ belongs to $\mE_1(\infty)$ and  
\begin{align*}
\|& v \|_{\mE_1(\infty)} \leq C( \| u_0 \|_{C^{2m+\a }(\o)} + \| e^{ \omega t} f \|_{\mE_0(\infty)}+\|e^{ \omega t}g\|_{\mF(\infty)}).  
\end{align*}
\end{cor}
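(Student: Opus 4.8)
The plan is to obtain Corollary \ref{corollary asymptotic} as the special case of Theorem \ref{asymptotic behavior theorem for systems} in which the spectrum of $-A_0$ is entirely contained in the open left half-plane. First I would observe that under the hypothesis $\omega_A = \sup\{\mathrm{Re}\,\lambda : \lambda \in \sigma(-A_0)\} < 0$ the spectral set $\sigma^-(-A_0)$ coincides with the whole spectrum $\sigma(-A_0)$, so that the associated spectral projection $P^- = P^s$ equals the identity $I$ and consequently $I - P^- = 0$. Moreover the choice $\omega \in (0, -\omega_A)$ satisfies $0 < \omega < -\mathrm{max}\{\mathrm{Re}\,\lambda : \lambda \in \sigma^-(-A_0)\} = -\omega_A$, so the standing hypothesis of Theorem \ref{asymptotic behavior theorem for systems} on $\omega$ is met. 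The regularity and compatibility assumptions on $f$, $g$ and $u_0$ are transferred verbatim.

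The second step is to check that the boundedness criterion \eqref{condition on u_0 system} of Theorem \ref{asymptotic behavior theorem for systems} is automatically fulfilled here. Since $I - P^- = 0$, both the left-hand side $(I-P^-)u_0$ and the right-hand side (which is a sum of integrals each carrying the factor $I-P^-$) vanish identically, so \eqref{condition on u_0 system} holds trivially for every admissible datum. Therefore Theorem \ref{asymptotic behavior theorem for systems} applies and yields that $v(\sigma,t) = e^{\omega t} u(\sigma,t)$ is bounded on $[0,\infty) \times \overline{\Omega}$; more precisely it gives $v \in \mathbb{E}_1(\infty)$ together with the stated estimate
$$
\|v\|_{\mathbb{E}_1(\infty)} \leq C\big( \|u_0\|_{C^{2m+\a}(\o)} + \|e^{\omega t} f\|_{\mathbb{E}_0(\infty)} + \|e^{\omega t} g\|_{\mathbb{F}(\infty)} \big).
$$
One should also note that $u$ is indeed the unique solution in $\mathbb{E}_1(T)$ for all finite $T$ by Proposition \ref{M-R}, so the hypotheses of the theorem concerning the solution $u$ are consistent; the representation formula \eqref{expression for u in systems} simply collapses, with $P^- = I$, to $u(\cdot,t) = e^{tL}u_0 + \int_0^t e^{(t-s)L}[f(\cdot,s) + \mathcal{L}Eg(\cdot,s)]\,\mathrm{d}s - L\int_0^t e^{(t-s)L}Eg(\cdot,s)\,\mathrm{d}s$, recovering the Balakrishnan-type formula \eqref{Balakrishnan formula systems}.

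There is essentially no obstacle here: the corollary is purely a matter of specializing the general theorem to the degenerate splitting $\sigma = \sigma^- \cup \emptyset$. The only point that deserves a sentence of justification is that $\sigma^-(-A_0)$ is genuinely a spectral set in this case — but that is guaranteed by Remark \ref{isolated} (the spectrum consists of isolated eigenvalues) together with the assumption $\omega_A < 0$, which ensures $\sigma(-A_0)$ is bounded away from the imaginary axis so that the Dunford integral defining $P^-$ makes sense and equals $I$. Thus the proof is a two-line reduction, exactly as indicated in the excerpt's own proof sketch.
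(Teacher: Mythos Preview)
Your proposal is correct and takes essentially the same approach as the paper, which simply states that in the stable case $\sigma(-A_0)=\sigma^-(-A_0)$ the corollary follows immediately from Theorem~\ref{asymptotic behavior theorem for systems}. Your observation that $P^-=I$ forces $I-P^-=0$, so that condition~\eqref{condition on u_0 system} is trivially satisfied, is exactly the content of this specialization.
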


\subsection{Proof of  Proposition \ref{nonlinear estimate}}\label{proof of nonlinear estimates}
In fact we are following the steps in the proof of  Theorem 4.1 in \cite{Lunardi2002385}.\\
For $0\leq t \leq a $, 
\begin{align*}
\|e^{\sigma t}F\l z_1\l t,\cdot\r\r-e^{\sigma t}F(z_2(t,\cdot))\|_{C^\a(\o)}&\leq K(r)\norm{e^{\sigma t}(z_1(t,\cdot)-z_2(t,\cdot))}_{C^{2m+\a}(\o)}\\
&\leq K(r)\norm{e^{\sigma t}(z_1-z_2)}_{\mE_1(a)},
\end{align*}
\begin{align*}
\norm{e^{\sigma t}G_j\l z_1\l t,\cdot\r\r-e^{\sigma t}G_j(z_2(t,\cdot))}_{C^{2m+\a-m_j}(\partial \Omega)}&\leq H_j(r)\norm{e^{\sigma t}(z_1(t,\cdot)-z_2(t,\cdot))}_{X_{1}}\\
&\leq H_{j}(r)\norm{e^{\sigma t}(z_1-z_2)}_{\mE_1(a)},
\end{align*}
while for $0\leq s\leq t \leq a$,
\begin{eqnarray*}
&& \hspace{-25pt}\|e^{\sigma t}F(z_1(t,\cdot))-e^{\sigma t}F(z_2(t,\cdot))-e^{\sigma               s}F(z_1(s,\cdot))+e^{\sigma s}F(z_2(s,\cdot))\|_{C(\o)}\\
&=& \left\| \rule{0cm}{15pt} \int_0^1 \! e^{\sigma t} F'\l\lambda z_1(t,\cdot)+(1-\lambda               )z_{2}(t,\cdot)\r(z_1(t,\cdot)-z_2(t,\cdot)) \right.\\ 
&& \left. \hspace{5pt}-e^{\sigma s} F'\l\lambda z_1(s,\cdot)+(1-\lambda )z_{2}(s,\cdot)\r(z_1(s,\cdot)-z_2(s,\cdot))\,\mathrm{d}\lambda                \rule{0cm}{15pt}\right \|_{C(\o)} \\
&\leq& \int_0^1\!\Big \| \left(\ F'\l \rule{0cm}{10 pt}\lambda z_1(t,\cdot)+(1-\lambda )z_{2}(t,\cdot)\r-              F'\l \rule {0cm}{10pt}\lambda z_1(s,\cdot)+(1-\lambda )z_{2}(s,\cdot)\r\right)\cdot\\
&& \hspace{20pt}\cdot  e^{\sigma t} (z_1(t,\cdot)-z_{2}(t,\cdot))\Big\|_{C(\o)}\,\mathrm{d}\lambda\\
&&  + \int_0^1 \left\|F'\l \rule {0cm}{10pt}\lambda z_1(s,\cdot)+(1-\lambda )z_{2}(s,\cdot)\r \cdot\right.\\
&&  \hspace{32pt}\cdot\l \left. \rule{0cm}{10pt} e^{\sigma t}(z_1(t,\cdot)-z_{2}(t,\cdot))-e^{\sigma                 s}(z_1(s,\cdot)-z_{2}(s,\cdot))\r \right\|\, \mathrm{d}\lambda \\
&\leq& \frac{L}{2} \l \|z_1(t,\cdot)-z_{1}(s,\cdot)\|_{C^{2m}(\o)}+\|z_2(t,\cdot)-z_{2}(s,\cdot)\|_{C^{2m}(\o)}\r               e^{\sigma t} \|z_1(t,\cdot)-z_{2}(t,\cdot)\|_{C^{2m}(\o)}\\
&& + Lr \|e^{\sigma t}\l z_1(t,\cdot)-z_2(t,\cdot)\r-e^{\sigma s }(z_1(s,\cdot)-z_2(s,\cdot))\|_{C^{2m}(\o)}\\
&\leq& \frac{L}{2}(t-s)^{\frac{\a}{2m}}(\|z_1\|_{C^{\frac{\a}{2m}}((0,a),C^{2m}(\o))}+\|z_2\|_{C^{\frac{\a}{2m}}((0,a),C^{2m}(\o))})\|e^{\sigma               t}(z_1 -z_2)\|_{\mE_1(a)} \\
&& + Lr(t-s)^{\frac{\a}{2m}} \|e^{\sigma t}\l z_1-z_2\r\|_{C^{\frac{\a}{2m}}((0,a),C^{2m}(\o))}\\
&\leq&    2Lr (t-s)^{\frac{\a}{2m}}\|e^{\sigma t}(z_1-z_2)\|_{\mE_1(a)}\,.
\end{eqnarray*}
The last inequality is a consequence of Lemma \ref{parabolic estimate} and the fact that $z_1, z_2\in B_{\mathbb{E}_1(a)}(0,r)$.
\vspace{5pt}

Since $1+\frac{\a}{2m}-\frac{m_j}{2m}<1$  for $j$ with  $m_j\ge 1$,    we get similarly \begin{eqnarray*}
&&\|e^{\sigma t}G_j(z_1(t,\cdot))-e^{\sigma t}G_j(z_2(t,\cdot))-e^{\sigma s}G_j(z_1(s,\cdot))+e^{\sigma s}G_j(z_2(s,\cdot))\|_{C(\partial \Omega)}\\
&&\quad\le2Lr(t-s)^{1+\frac{\a}{2m}-\frac{m_j}{2m}}\|e^{\sigma t}(z_1-z_2)\|_{\mE_1(a),}
\end{eqnarray*}
 where we have used the  embedding \begin{eqnarray*}
\mE_1(a)\hookrightarrow C^{1+\frac{\a}{2m}-\frac{m_j}{2m}}((0,a),C^{m_{j}}(\o)),
\end{eqnarray*}
which is a consequence of Lemma \ref{parabolic estimate}.

For $j$ such that $m_j=0$, we have to estimate the complete norm, i.e.,  
$$
\|e^{\sigma t }(G_j(z_1)-G_j(z_2))\|_{C^{1+\frac{\a}{2m}}(I,C(\partial \Omega))}\,,
$$ 
which includes the time derivative. The proof is again similar, but for the convenience we give some details of the main part of it namely, estimating $$\| e^{\sigma t} \frac{d}{dt}(G_j(z_1)-G_j(z_2))\|_{C^{\frac{\a}{2m}}(I,C(\partial  \Omega))} \,.$$
Note that exactly at this point one needs $C^2$-regularity for  $G_j$.

For $0\leq s\leq t \leq a$, we have
\begin{eqnarray*}
&& \hspace{-25pt} \left\|e^{\sigma t}G_j'(z_{_{1}}(t,\cdot))z_{1}'(t,\cdot)-e^{\sigma t}G_j'(z_2(t,\cdot))z_2'(t,\cdot)-\right.\\ 
&&\left.\hspace{3cm}-e^{\sigma s}G_j'(z_{_{1}}(s,\cdot))z_{1}'(s,\cdot)
+e^{\sigma s}G_j'(z_2(s,\cdot))z_2'(s,\cdot)\right \|_{C(\partial \Omega)} \\
&\leq&  \left\| \rule{0cm}{15pt}\int_0^1 \!e^{\sigma t}G_j''\l \rule{0cm}{10 pt}\lambda z_1(t,\cdot)+(1-\lambda)z_2(t,\cdot)\r\l \rule{0cm}{10pt}z_1(t,\cdot)-z_2(t,\cdot)\r \cdot \right.\\
&&\left. \hspace{0.36cm}\cdot \l\rule{0cm}{10pt}\lambda z_1'(t,\cdot)+(1-\lambda) z_2'(t,\cdot)\r -e^{\sigma s}G_j''\l \rule{0cm}{10 pt}\lambda z_1(s,\cdot)+(1-\lambda)z_2(s,\cdot)\r \cdot\right.\\
&& \left.\hspace{2.7cm} \cdot\l \rule{0cm}{10pt}z_1(s,\cdot)-z_2(s,\cdot)\r\l\rule{0cm}{10pt}\lambda z_1'(s,\cdot)+(1-\lambda)
 z_2'(s,\cdot)\r \,\mathrm{d}\lambda\rule{0cm}{15pt}\right\|_{C(\partial \Omega)}\\
&&     +\left\|\int_0^1\!e^{\sigma t}G_j'\l \rule{0cm}{10 pt}\lambda z_1(t,\cdot)+(1-\lambda)z_2(t,\cdot)\r\l \rule{0cm}{10pt}z_1'(t,\cdot)-z_2'(t,\cdot)\r\right.\\
&&     \left.\hspace{20pt}-e^{\sigma s}G_j'\l \rule{0cm}{10 pt}\lambda z_1(s,\cdot)+(1-\lambda)z_2(s,\cdot)\r\l \rule{0cm}{10pt}z_1'(s,\cdot)-z_2'(s,\cdot)\r\,\mathrm{d}\sigma\right\|_{C(\partial \Omega)}\\
\end{eqnarray*}
\vspace{-1.1cm}
\begin{eqnarray*}
&\leq& \int_0^1\!\left\| \l G_j''\l \rule{0cm}{10 pt}\lambda z_1(t,\cdot)+(1-\lambda)z_2(t,\cdot)\r-G_j''\l \rule{0cm}{10 pt}\lambda z_1(s,\cdot)+(1-\lambda)z_2(s,\cdot)\r\r \cdot\right.\\
&&\left.\qquad\cdot e^{\sigma t}\l \rule{0cm}{10pt}z_1(t,\cdot)-z_2(t,\cdot)\r\l\rule{0cm}{10pt}\lambda z_1'(t,\cdot)+(1-\lambda) z_2'(t,\cdot)\r \right\|_{C(\partial \Omega)}\,\mathrm{d}\lambda\\
&&+\int_0^1 \! \left\|G_j''\l\rule{0cm}{10pt}\lambda z_1(s,\cdot)+(1-\lambda) z_2(s,\cdot)\r \l  e^{\sigma t}\l \rule{0cm}{10pt}z_1(t,\cdot)-z_2(t,\cdot)\r-\right.\right.\\
&&\left.\left.\hspace{32pt} -e^{\sigma s}\l \rule{0cm}{10pt}z_1(s,\cdot)-z_2(s,\cdot)\r\r\times \l\rule{0cm}{10pt}\lambda z_1'(t,\cdot)+(1-\lambda) z_2'(t,\cdot)\r\right\|_{C(\partial \Omega)}\,\mathrm{d}\lambda
\end{eqnarray*}
\vspace{-0.65cm}
\begin{eqnarray*}
&& +\int_0^1\! \left\|G_j''\l\rule{0cm}{10pt}\lambda z_1(s,\cdot)+(1-\lambda) z_2(s,\cdot)\r  e^{\sigma s}\l \rule{0cm}{10pt}z_1(s,\cdot)-z_2(s,\cdot)\r\cdot\right.\\
&&\left.\hspace{32pt}\cdot \l\rule{0cm}{10pt}\lambda z_1'(t,\cdot)+(1-\lambda) z_2'(t,\cdot)-\lambda z_1'(s,\cdot)-(1-\lambda)z_2'(s,\cdot)\r\right\|_{C(\partial \Omega)}
\,\mathrm{d}\lambda
\end{eqnarray*}
\vspace{-0.55cm}
\begin{eqnarray*}
&& + \int_0^1 \! \left\| \l G_j' \l \rule{0cm}{10 pt} \lambda z_1(t,\cdot)+(1- \lambda) z_2(t,\cdot)\r-G_j'\l \rule{0cm}{10 pt}\lambda z_1(s,\cdot)+(1-\lambda)z_2(s,\cdot)\r \r \cdot\right.
\end{eqnarray*}
\vspace{-0.5cm}
\begin{eqnarray*}
&&\hspace{32pt}\left. \cdot e^{\sigma t} \l \rule{0cm}{10 pt}z_1'(t,\cdot)-z_2'(t,\cdot)\r \right\|_{C(\partial \Omega)}\,\mathrm{d} \lambda \\
&& + \int_0^1 \! \left\| G_j'\l \rule{0cm}{10pt} \lambda z_1(s,\cdot)+(1-\lambda)z_2(s,\cdot) \r \cdot \right. \\&& \hspace{1.75cm}\cdot \left.\l \rule{0cm}{10pt} e^{\sigma t} \l \rule{0cm}{10pt}z_1'(t,\cdot)-z_2'(t,\cdot) \r - e^{\sigma s} \l \rule{0cm}{10pt}z_1'(s,\cdot)-z_2'(s,\cdot) \r \r \right\|_{C(\partial \Omega)} \, \mathrm{d} \lambda \\ 
&\leq& \left(12Lr^2 +2Lr  \right)(t-s)^{\frac{\a}{2m}}\|e^{\sigma t}(z_1-z_2)\|_{\mE_1(a)}\,, \end{eqnarray*}
where we have used the fact that $\lambda,1-\lambda \leq 1$ and 
$$
\mE_1(a)\hookrightarrow C^{\frac{\a}{2m}}(I, C^{2m}(\o))\hookrightarrow C^{\frac{\a}{2m}}(I, C(\o)) \,.
$$
Summing up, by adding all constants  we get a constant $D(r)$ such that $D(r)$ goes to zero as $r \longrightarrow 0$    and so the statement follows.


\medskip

\noindent 
\textbf{Acknowledgement:} The second author was supported by DFG, GRK 1692 "Curvature, Cycles, and Cohomology" during much of the work and    by Bayerisches Programm zur Realisierung der Chancengleichheit für Frauen in Forschung und Lehre und nationaler MINT-Pakt during the last year. The support is gratefully acknowledged.

\bibliographystyle{plain}
\bibliography{references-1}

\end{document}